\newtheorem{thm}{Theorem}[section]
\newtheorem{lemma}[thm]{Lemma}
\newtheorem{prop}[thm]{Proposition}
\newtheorem{cor}[thm]{Corollary}
\newtheorem{example}[thm]{Example}
\newtheorem{rmk}[thm]{Remark}
\newcommand{\qed}{\hfill \mbox{\raggedright \rule{.07in}{.1in}}}
\newenvironment{proof}{\vspace{1ex}\noindent{\bf
Proof}\hspace{0.5em}}{\hfill\qed\vspace{1ex}}
\newenvironment{pfof}[1]{\vspace{1ex}\noindent{\bf Proof of
#1}\hspace{0.5em}}{\hfill\qed\vspace{1ex}}
\numberwithin{equation}{section}
\newcommand{\E}{\mathbb{E}}
\newcommand{\N}{\mathbb{N}}
\newcommand{\R}{\mathbb{R}}
\newcommand{\Z}{\mathbb{Z}}
\newcommand{\BBM}{\mathbb{M}}
\newcommand{\BBS}{\mathbb{S}}
\newcommand{\BBQ}{\mathbb{Q}}
\newcommand{\BBW}{\mathbb{W}}
\newcommand{\BBY}{\mathbb{Y}}
\newcommand{\tS}{\tilde{S}}
\newcommand{\tBBS}{\widetilde{\mathbb{S}}}
\newcommand{\tW}{\widetilde{W}}
\newcommand{\tBBW}{\widetilde{\mathbb{W}}}
\newcommand{\Lip}{{\operatorname{Lip}}}
\newcommand{\Leb}{{\operatorname{Leb}}}
\newcommand{\diam}{\operatorname{diam}}
\newcommand{\cB}{{\mathcal{B}}}
\newcommand{\cG}{{\mathcal{G}}}
\newcommand{\cM}{{\mathcal{M}}}
\newcommand{\bbeta}{{\beta}}
\newcommand{\TT}{{t_1}}
\newcommand{\SMALL}{\textstyle}
\newcommand{\BIG}{\displaystyle}
\begin{document}

\title{Deterministic homogenization under optimal moment assumptions for fast-slow systems.  Part 1.}

\author
{A. Korepanov\thanks{Mathematics Department, University of Exeter, Exeter, EX4 4QF, UK}
\and
Z. Kosloff\thanks{Einstein Institute of Mathematics,
Hebrew University of Jerusalem,
Givat Ram. Jerusalem, 9190401, ISRAEL}
\and
  I. Melbourne\thanks{Mathematics Institute, University of Warwick, Coventry, CV4 7AL, UK}
}

\date{14 June 2020}

 \maketitle

\begin{abstract}
We consider deterministic homogenization (convergence to a stochastic differential equation) for multiscale systems of the form
\[
 x_{k+1} = x_k + n^{-1} a_n(x_k,y_k) + n^{-1/2} b_n(x_k,y_k), \quad y_{k+1} = T_n y_k,
\]
where the fast dynamics is given by a family $T_n$ of nonuniformly expanding maps.   
Part~1 builds on our recent work on martingale approximations for families of nonuniformly expanding maps.
We prove an iterated weak invariance principle and establish optimal iterated moment bounds for such maps.  (The iterated moment bounds are new even for a fixed nonuniformly expanding map $T$.)
The homogenization results are a consequence of this together with parallel developments on rough path theory in Part 2 by Chevyrev, Friz, Korepanov, Melbourne \& Zhang.
\end{abstract}

\section{Introduction}

Recently, there has been a great deal of interest in deterministic homogenization~\cite{CFKMZ19,SimoiLiverani16,SimoiLiveranisub,Dolgopyat04, Dolgopyat05, GM13b, KM16,KM17,KKM18,MS11} whereby deterministic multiscale systems converge to a stochastic differential equation as the time-scale separation goes to infinity.  A byproduct of this is a deeper understanding~\cite{KM16} of the correct interpretation of limiting stochastic integrals~\cite{WongZakai65}.

Using rough path theory~\cite{FrizHairer,Lyons98}, it was shown in~\cite{KM16,KM17} that
homogenization reduces to proving 
certain statistical properties
for the fast dynamics.  
These statistical properties take the form of an ``iterated invariance principle'' (iterated WIP) 
which gives the correct interpretation of the limiting stochastic integrals, 
and control of ``iterated moments''
which provides tightness in the rough path topology used for proving convergence.
In particular, the homogenization question was settled in~\cite{KM16,KM17} for uniformly expanding/hyperbolic fast (Axiom~A) dynamics and for nonuniformly expanding/hyperbolic fast dynamics modelled by Young towers with exponential tails~\cite{Young98}.
The results in~\cite{KM16,KM17} also covered fast dynamics modelled by Young towers with polynomial tails~\cite{Young99} but the results were far from optimal.  
It turns out that advances on two separate fronts are required to obtain optimal results: 
\begin{itemize}
\item[(i)] Martingale methods for nonuniformly expanding maps modelled by Young towers, yielding optimal control of iterated moments; 
\item[(ii)] Discrete-time rough path theory in  $p$-variation topologies, relaxing the required control for ordinary and iterated moments.
\end{itemize}
These two directions rely on techniques in smooth ergodic theory and in stochastic analysis respectively, so the homogenization question divides naturally into two parts.
This paper {\em Part 1} covers the ergodic-theoretical aspects required for (i), while the rough path aspects required for (ii) are dealt with in {\em Part 2} by Chevyrev~{\em et al.}~\cite{CFKMZsub}.
As we explain below, together these provide an optimal solution to the
homogenization question when the fast dynamics is given by a nonuniformly expanding map or a family of such maps.
\\

The homogenization question that we are interested in takes the following form.
Let $T_n:\Lambda\to \Lambda$, $n\ge1$, be a family of dynamical systems with
ergodic invariant probability measures $\mu_n$.
Consider the fast-slow system
\begin{align} \label{eq:fs}
 x_{k+1} = x_k + n^{-1} a_n(x_k,y_k) + n^{-1/2} b_n(x_k,y_k), \quad y_{k+1} = T_n y_k,
\end{align}
where $x_k=x_k^{(n)}$ takes values in $\R^d$ with $x_0\equiv \xi\in\R^d$, and $y_k$ takes values in $\Lambda$.
Our main assumption is that $T_n$ is a uniform family of nonuniformly expanding maps of order $p>2$ as in~\cite{KKM18} (see Section~\ref{sec:NUEn}  below for precise definitions).
We impose mild regularity conditions on $a_n,\,b_n:\R^d\times \Lambda\to \R^d$ and require that
$\int_\Lambda b_n(x,y)\,d\mu_n(y)=0$ for all $x\in\R^d$,
$n\ge1$.

Define $\hat x_n(t)=x^{(n)}_{[nt]}$ and let $\lambda_n$ be a family of probability measures on $\Lambda$.
We regard $\hat x_n$ as a sequence of random variables on the probability spaces $(\Lambda,\lambda_n)$ with values 
in the Skorohod space $D([0,1],\R^d)$.
The aim is to prove weak convergence, 
\(
\hat x_n\to_{\lambda_n} X\;
\text{ as $n\to\infty$},
\)
where $X$ is the solution to a
stochastic differential equation.

\begin{example}
\label{ex:LSV}
To fix ideas, we focus first on the case where $T_n\equiv T$ is a single nonuniformly expanding map.
Pomeau-Manneville intermittent maps~\cite{PomeauManneville80} provide the prototypical examples of such maps.  We consider in particular the class of intermittent maps studied in~\cite{LSV99}, namely
\begin{align} \label{eq:LSV}
T:[0,1]\to[0,1], \qquad
Tx=\begin{cases}
x(1+2^\gamma x^\gamma) & x<\frac12 \\
2x-1 & x>\frac12
\end{cases}.
\end{align}
Here $\gamma>0$ is a parameter and there is a unique absolutely continuous invariant probability measure $\mu$ provided $\gamma<1$.
Moreover, the central limit theorem (CLT) holds for all H\"older observables $v:[0,1]\to\R$, provided $\gamma<\frac12$.
By~\cite{Gouezel04}, the CLT fails  for typical H\"older observables once $\gamma>\frac12$. Even for $\gamma=\frac12$, the CLT requires a nonstandard normalization.  Hence it is natural to restrict here to the range $\gamma\in(0,\frac12)$.
(The range $\gamma\in(\frac12,1)$ leads to superdiffusive phenomena~\cite{Gouezel04,MZ15} and we refer to~\cite{CFKMsub,GM13b} for the homogenization theory for the corresponding fast-slow systems.)

The homogenization problem for fast-slow systems driven by such intermittent maps $T$ (with $\lambda_n\equiv\mu$) was previously considered in~\cite{KM16} and then~\cite{CFKMZ19}.  The techniques therein sufficed  in the restricted range $\gamma\in(0,\frac{2}{5})$ and even then only in the special case $b(x,y)=h(x)v(y)$ where
$h:\R^d\to\R^{d\times m}$, $v:\Lambda\to\R^d$.  There are two additional steps, covered in Parts~1 and~2 respectively, that lead to homogenization in the full range $\gamma\in(0,\frac12)$ and for general $b$:
\begin{itemize}
\item[(i)] As mentioned above, to obtain homogenization results it suffices to prove the
iterated WIP and control of iterated moments.  These statistical properties are formulated at the level of the map $T$ for H\"older observables $v:[0,1]\to\R^d$ with $\int v\,d\mu=0$.
The iterated WIP was already proved in~\cite{KM16} in the full range $\gamma\in(0,\frac12)$. 
Define 
\[
S_nv=\sum_{0\le j<n} v \circ T^j, \qquad
\BBS_nv=\sum_{0\le i<j<n} (v \circ T^i) \otimes (v \circ T^j).
\]
There are numerous methods for estimating ordinary moments $|S_nv|_{2(p-1)}$
for $p<1/\gamma$.
Estimates for iterated moments 
    $|\BBS_nv|_{2(p-1)/3}$ 
were given in~\cite{KM16}.
In Theorem~\ref{thm:moments} of the current paper, we estimate 
    $|\BBS_nv|_{p-1}$;
this is the first result giving optimal estimates for iterated moments.
Using~\cite{CFKMZ19},
we can then cover the full range $\gamma\in(0,\frac12)$ in the product case $b(x,y)=h(x)v(y)$.
\item[(ii)] 
The papers~\cite{KM16,KM17} use rough path theory in H\"older spaces.  
However,
H\"older rough path theory requires control of the ordinary moments $|S_nv|_{2q}$ and
the iterated moments $|\BBS_nv|_q$ for some $q>3$.
As shown in~\cite[Section~3]{MTorok12},
such control even for the ordinary moments requires $\gamma<\frac14$.
The papers by Chevyrev~{\em et al.}~\cite{CFKMZ19,CFKMZsub}
employ rough path theory in $p$-variation spaces and require iterated moment estimates only for $q>1$.  
Whereas~\cite{CFKMZ19} is restricted to the product case $b(x,y)=h(x)v(y)$,
Part~2~\cite{CFKMZsub} covers general $b$ following~\cite{KM17}.
This method combined with the previous iterated WIP and iterated moment estimates in~\cite{KM16} covers the range $\gamma\in(0,\frac25)$
for general $b$.
\end{itemize}
Combining (i) and (ii), we cover the optimal range $\gamma\in(0,\frac12)$
for general $b$.

In addition, we obtain the homogenization result $\hat x_n\to_{\lambda_n}X$ for a larger class of measures 
including the natural choice $\lambda_n\equiv\Leb$.

Returning to families of nonuniformly expanding maps, in~\cite{KKM18} we
considered intermittent maps $T_n:[0,1]\to[0,1]$, $n\in\N\cup\{\infty\}$, as in~\eqref{eq:LSV}
with parameters $\gamma_n$ 
such that $\lim_{n\to\infty}\gamma_n=\gamma_\infty$.
Homogenization results  with $\lambda_n=\mu_n$ and $\lambda_n\equiv\mu_\infty$ were obtained in~\cite{KKM18} for a restricted class of fast-slow systems with 
$b_n(x,y)=h_n(x)v_n(y)$, $h_n$ exact, for $\gamma_\infty\in(0, \frac{1}{2})$.  (For such systems, rough path theory was not needed.)
By the results in this paper, combined with those in Part~2,
we treat general $b_n$, again in the full
range $\gamma_\infty\in(0,\frac12)$.  Moreover, we cover a larger class of measures including 
$\lambda_n\equiv\Leb$.  
\end{example}

 The remainder of Part~1 is organized as follows.
 In Sections~\ref{sec:NUE} and~\ref{sec:NUEn}, we consider nonuniformly expanding maps (fixed, and in uniform families~\cite{KKM18}, respectively).
 In particular, we obtain optimal estimates for iterated moments in Theorem~\ref{thm:moments} and the iterated WIP for families in Theorem~\ref{thm:WIP}.
In Section~\ref{sec:ex}, we consider examples including the intermittent maps in
Example~\ref{ex:LSV}.
The theory is extended to families of nonuniformly expanding semiflows in
Section~\ref{sec:flow}.

We refer to Part~2 for the parallel developments in rough path theory and a precise statement and proof of homogenization for the fast-slow systems~\eqref{eq:fs}.

\vspace{-2ex}
\paragraph{Notation}
%
For $a,b\in\R^d$, we define the outer product
$a\otimes b=ab^T\in\R^{d\times d}$. 
For $J\in \R^{m\times n}$, we use the norm
$|J|=\big(\sum_{i=1}^m\sum_{j=1}^n J_{ij}^2\big)^{1/2}$.
Then \(|a \otimes b| \leq |a| |b|\) for $a,b\in\R^d$.
 
 For real-valued functions $f,\,g$, the integral $\int f\,dg$ denotes
 the It\^o integral (where defined).  
 Similarly, for vector-valued functions,
 $\int f\otimes dg$ denotes matrices of It\^o integrals.

We use ``big O'' and $\ll$ notation interchangeably, writing $a_n=O(b_n)$ or $a_n\ll b_n$
if there are constants $C>0$, $n_0\ge1$ such that
$a_n\le Cb_n$ for all $n\ge n_0$.
As usual, $a_n=o(b_n)$ means that $\lim_{n\to\infty}a_n/b_n=0$.

Recall that $v:\Lambda\to\R$ is a H\"older observable on a metric space $(\Lambda,d_\Lambda)$ if
  $\|v\|_{\eta} = |v|_\infty + |v|_\eta<\infty$ where $|v|_\infty=\sup_\Lambda|v|$,
$|v|_\eta= \sup_{x\neq y} |v(x)-v(y)|/d_\Lambda(x,y)^\eta$.

\section{Nonuniformly expanding maps}
\label{sec:NUE}

In this section, we recall and extend the results in~\cite{KKM18} for nonuniformly expanding maps.

Let $(\Lambda,d_\Lambda)$ be a bounded metric space with finite Borel measure $\rho$ and let $T:\Lambda\to\Lambda$ be a nonsingular transformation.
Let $Y\subset \Lambda$ be a subset of positive measure, and
let $\alpha$ be an at most countable measurable partition
of $Y$.    We suppose that there is an integrable
{\em return time} function $\tau:Y\to\Z^+$, constant on each $a$ with
value $\tau(a)\ge1$, and constants $\bbeta>1$, $\eta\in(0,1]$, $C_1\ge1$
such that for each $a\in\alpha$,
\begin{itemize}
\item[(1)] $F=T^\tau$ restricts to a (measure-theoretic) bijection from $a$  onto $Y$.
\item[(2)] $d_\Lambda(Fx,Fy)\ge \bbeta d_\Lambda(x,y)$ for all $x,y\in a$.
\item[(3)] $d_\Lambda(T^\ell x,T^\ell y)\le C_1d_\Lambda(Fx,Fy)$ for all $x,y\in a$,
$0\le \ell <\tau(a)$.
\item[(4)] $\zeta_0=\frac{d\rho|_Y}{d\rho|_Y\circ F}$
satisfies $|\log \zeta_0(x)-\log \zeta_0(y)|\le C_1d_\Lambda(Fx,Fy)^\eta$ for all
\mbox{$x,y\in a$}.
\end{itemize}

Such a dynamical system $T:\Lambda\to \Lambda$ is called {\em nonuniformly expanding}.
(It is not required that $\tau$ is the first return time to $Y$.)
We refer to the induced map $F=T^\tau:Y\to Y$ as a {\em uniformly expanding map}.
There is a unique absolutely continuous $F$-invariant probability measure $\mu_Y$
on $Y$ and $d\mu_Y/d\rho\in L^\infty$.

Define the (one-sided) Young tower map~\cite{Young99}, $f_\Delta:\Delta\to\Delta$, 
\[
\Delta=\{(y,\ell)\in Y\times\Z:0\le\ell\le \tau(y)-1\},
\qquad
f_\Delta(y,\ell)=\begin{cases} (y,\ell+1), & \ell\le \tau(y)-2
\\ (Fy,0), & \ell=\tau(y)-1 \end{cases}.
\]
The projection
$\pi_\Delta:\Delta\to \Lambda$, $\pi_\Delta(y,\ell)=T^\ell y$, defines a semiconjugacy
from $f_\Delta$ to $T$.  Define the ergodic $f_\Delta$-invariant
probability measure
$\mu_\Delta=\mu_Y\times\{{\rm counting}\}/\int_Y \tau\,d\mu_Y$ on $\Delta$.
Then
$\mu=(\pi_\Delta)_*\mu_\Delta$ is an absolutely continuous ergodic  $T$-invariant probability measure on $\Lambda$.

In this section, we work with a fixed
nonuniformly expanding map $T:\Lambda\to \Lambda$ with
induced map $F=T^\tau:Y\to Y$ where $\tau\in L^p(Y)$ for some $p\ge2$,\footnote{
In~\cite{KKM18}, we considered this set up with $p\ge1$.  Since we have no new results for $p<2$ beyond those already in~\cite{KKM18}, we restrict in this paper to the case $p\ge2$.
}
and Young tower map $f_\Delta:\Delta\to\Delta$.
The corresponding ergodic invariant probability measures are denoted $\mu$, $\mu_Y$ and $\mu_\Delta$.
Throughout, \(|\;|_p\) denotes the $L^p$-norm on
\((\Lambda,\mu)\), 
\((Y,\mu_Y)\) and 
$(\Delta,\mu_\Delta)$ as appropriate.
Also, $\|\;\|_\eta$ denotes the H\"older norm on $\Lambda$ and $Y$.

Although the map $T$ is fixed, the dependence of various constants on $T$ is important in later sections.
To simplify the statement of results in this section, we denote by \(C\) various constants
depending continuously on $\diam\Lambda$, $C_1$, $\bbeta$, $\eta$, $p$ and 
$|\tau|_p$.

\vspace{1ex}

Let $L:L^1(\Delta)\to L^1(\Delta)$
and $P:L^1(Y)\to L^1(Y)$ denote the transfer 
operators corresponding to $f_\Delta: \Delta \to \Delta$ and
$F: Y \to Y$.
(So 
\(
  \int_{\Delta} Lv\,w\,d\mu_\Delta
  = \int_\Delta v\,w\circ f_\Delta\,d\mu_\Delta
\)
for $v\in L^1(\Delta)$, $w\in L^\infty(\Delta)$, and 
\(
  \int_Y Pv\,w\,d\mu_Y
  = \int_Y v\,w\circ F\,d\mu_Y
\)
for $v\in L^1(Y)$, $w\in L^\infty(Y)$.)

Let  \(\zeta = d \mu_Y/d\mu_Y \circ F\).
Given $y\in Y$ and \(a \in \alpha\), let
$y_a$ denote the unique $y_a\in a$ with $F y_a=y$.
Then we have the pointwise expression for $L$,
\begin{equation}
  \label{eq:PL}
  (L v)(y, \ell) = 
  \begin{cases}
    \sum_{a \in \alpha} \zeta(y_a) v(y_a, \tau(y_a)-1), & \ell=0 \\
    v(y, \ell-1), & 1\le \ell \le \tau(y)-1
  \end{cases}.
\end{equation}

\subsection{Martingale-coboundary decomposition}
\label{sec:mart}

Let $T:\Lambda\to\Lambda$ be a nonuniformly expanding map as above with return time $\tau\in L^p(Y)$, $p\ge2$.
Fix $d\ge1$ and let $v\in C^\eta(\Lambda,\R^d)$ with $\int_\Lambda v\,d\mu=0$.
 Define the lifted observable $\phi=v\circ\pi_\Delta : \Delta \to \R^d$.

We recall the martingale-coboundary decomposition
\begin{equation}
  \label{eq:mcd}
  \phi = m + \chi \circ f_\Delta - \chi
  , \qquad m \in \ker L
\end{equation}
from \cite[Section~2.2]{KKM18}, which is obtained as follows.
First, define the \emph{induced observable} $\phi': Y \to \R^d$ by
\(\phi'(y) = \sum_{\ell=0}^{\tau(y)-1} \phi(y,\ell)\).
Next, define \(\chi', m' : Y \to \R^d\) by
\(
  \chi'=\sum_{k=1}^\infty P^k\phi'
\)
and
\(
  \phi'=m'+\chi' \circ F-\chi'
\). Let
\begin{equation} \label{eq:chi}
  \chi(y, \ell) = 
  \chi'(y)+ \sum_{k=0}^{\ell-1} \phi(y, k)
  \quad \text{and} \quad
  m(y, \ell) = \begin{cases}
    \hphantom{Y} 0, & \ell \leq \tau(y)-2 \\
    m'(y), & \ell=\tau(y)-1
  \end{cases}.
\end{equation}

By \cite[Section~2.2]{KKM18}, 
\(\|\chi'\|_\eta \leq C \|v\|_\eta\).
Furthermore,
\begin{prop}
  \label{prop:m}
  \(|m|_p \leq C \|v\|_\eta\), \(|\chi|_{p-1} \leq C \|v\|_\eta\)
  and for all \(n \geq 1\), \(q \geq p\),
  \[
    \big| \max_{k \leq n} | \chi \circ f_\Delta^k - \chi| \big|_p
    \leq C \|v\|_\eta \big(n^{1/q} + n^{1/p} |1_{\{\tau \geq n^{1/q}\}} \tau|_p\big)
    .
  \]
(In particular, $\big|\max_{k \leq n} | \chi \circ f_\Delta^k - \chi| \big|_p\le C'\|v\|_\eta n^{1/p}$.)
\end{prop}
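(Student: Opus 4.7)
My plan for the first two bounds is to push everything down to the base $Y$ and use the cited estimate $\|\chi'\|_\eta \leq C\|v\|_\eta$. For $|m|_p$: since $m$ is supported on the column tops, $|m|_{p,\mu_\Delta}^p = \bar\tau^{-1}|m'|_{p,\mu_Y}^p$ with $\bar\tau = \int_Y \tau\,d\mu_Y$, and applying the triangle inequality to $m' = \phi' - (\chi'\circ F - \chi')$ together with $|\phi'|_p \leq |v|_\infty|\tau|_p$ finishes the bound. For $|\chi|_{p-1}$: I would use the pointwise estimate $|\chi(y,\ell)| \leq |\chi'|_\infty + \ell|v|_\infty \leq C\|v\|_\eta(1+\tau(y))$, so that
\[
|\chi|_{p-1,\mu_\Delta}^{p-1} \leq C\|v\|_\eta^{p-1}\bar\tau^{-1}\int_Y \tau(1+\tau)^{p-1}\,d\mu_Y \ll \|v\|_\eta^{p-1},
\]
which is finite since $\tau \in L^p(\mu_Y)$.

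The maximal bound is the heart of the proposition. I would start from the representation obtained by writing $f_\Delta^k(y,\ell) = (F^{N_k}y,\ell'_k)$, where $N_k$ counts the returns to $Y$ and $\ell'_k < \tau(F^{N_k}y)$:
\[
\chi\circ f_\Delta^k(y,\ell) - \chi(y,\ell) = (\chi'(F^{N_k}y) - \chi'(y)) + S_{\ell'_k}v(F^{N_k}y) - S_\ell v(y).
\]
Bounding each piece gives $|\chi\circ f_\Delta^k - \chi|(y,\ell) \leq C\|v\|_\eta(1 + \ell + T_n^*(y))$, with $T_n^*(y) := \max_{0 \leq N \leq n}\tau(F^Ny)$. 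A separate, sharper estimate is available when the orbit never escapes the column of $y$: if $\tau(y) > n$ and $\ell < \tau(y) - n$ then $f_\Delta^k(y,\ell)$ stays in the column for all $k \leq n$, and the difference reduces to the partial Birkhoff sum $S_k\phi(y,\ell)$, bounded by $n|v|_\infty$. The plan is to split $(y,\ell) \in \Delta$ into these ``stays'' and ``leaves'' regimes and bound $|M_n|_{p,\mu_\Delta}^p$ by summing the two contributions.

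The ``stays'' contribution is $(n|v|_\infty)^p\bar\tau^{-1}\int_Y (\tau-n)_+\,d\mu_Y$; using $\tau \leq \tau^p/n^{p-1}$ on $\{\tau > n\}$ and $1_{\{\tau > n\}} \leq 1_{\{\tau \geq n^{1/q}\}}$ (which holds since $q \geq 1$), this is $\leq Cn\|v\|_\eta^p |1_{\{\tau \geq n^{1/q}\}}\tau|_p^p$. The ``leaves'' contribution has at most $\min(\tau(y),n)$ eligible heights $\ell$, each bounded by $C\|v\|_\eta(1+T_n^*)$; I would split $T_n^*$ at the threshold $n^{1/q}$, obtaining a bounded-part contribution of $Cn^{p/q}\|v\|_\eta^p$ and, via $(T_n^*)^p 1_{\{T_n^* > n^{1/q}\}} \leq \sum_{N\leq n}(\tau\circ F^N)^p 1_{\{\tau\circ F^N > n^{1/q}\}}$, a tail contribution $C\bar\tau^{-1}\|v\|_\eta^p\sum_N \int_Y \min(\tau,n)(\tau\circ F^N)^p 1_{\{\tau\circ F^N \geq n^{1/q}\}}\,d\mu_Y$. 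The main obstacle is bounding this last sum by $Cn|1_{\{\tau \geq n^{1/q}\}}\tau|_p^p$ rather than the naive $Cn^2|1_{\{\tau\geq n^{1/q}\}}\tau|_p^p$: the $N = 0$ term is controlled by the cap $\min(\tau,n) \leq n$ (which averts the divergent $\int \tau^{p+1}1_{\{\tau \geq n^{1/q}\}}\,d\mu_Y$ that would appear were the cap replaced by $\tau$), while the $N \geq 1$ terms require decay of correlations for the uniformly expanding base map $F$ so that $\min(\tau,n)$ and $(\tau\circ F^N)^p 1_{\{\tau\circ F^N \geq n^{1/q}\}}$ effectively decouple. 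Assembling the pieces and taking the $p$-th root gives the claimed bound; the parenthetical $Cn^{1/p}$ estimate then follows by specialising to $q = p$.
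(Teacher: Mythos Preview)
The paper does not prove this proposition; its proof consists solely of the citation ``See \cite[Propositions~2.4 and~2.7]{KKM18}.'' So there is no argument here to compare against, and your proposal is an independent attempt to supply a direct proof.

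Your treatment of $|m|_p$ and $|\chi|_{p-1}$ is correct and standard.

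For the maximal inequality, your stays/leaves decomposition, the pointwise control via $T_n^*=\max_{0\le N\le n}\tau\circ F^N$, and the handling of the $N=0$ term through the cap $\min(\tau,n)\le n$ are all sound. You have also correctly located the only real difficulty: showing
\[
\sum_{N=1}^n \int_Y \min(\tau,n)\,(\tau\circ F^N)^p\,1_{\{\tau\circ F^N\ge n^{1/q}\}}\,d\mu_Y
\;\ll\; n\,\big|1_{\{\tau\ge n^{1/q}\}}\tau\big|_p^p.
\]
Your proposed fix via decay of correlations for $F$ is the right idea and can be made to work: rewriting each summand as $\int_Y(P^N\min(\tau,n))\,\tau^p 1_{\{\tau\ge n^{1/q}\}}\,d\mu_Y$ and invoking the spectral gap of $P$ on the space of bounded locally H\"older functions (where $\min(\tau,n)$, being constant on partition elements, has seminorm zero and hence norm $n$) gives $|P^N\min(\tau,n)-\int\min(\tau,n)\,d\mu_Y|_\infty\ll\theta^N n$; summing the geometric tail then yields the desired $O(n)$. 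But this relies on a spectral-gap result for $F$ with constants controlled by the data $C_1,\beta,\eta$, which you have neither stated nor cited. As written, this step is a genuine gap --- fillable, but not yet filled --- and it is the crux of the whole estimate. I cannot tell whether the argument in \cite{KKM18} proceeds along these lines or avoids the correlation sum altogether by a different decomposition.
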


\begin{proof}
  See \cite[Propositions 2.4 and 2.7]{KKM18}.
\end{proof}

\begin{prop}
  \label{prop:3}
  \(\big|L^n|m|^p\big|_\infty \leq C \|v\|_\eta^p\) for all \(n \geq 1\).
\end{prop}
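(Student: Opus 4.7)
The plan is to exploit that $|m|^p$ is concentrated on the top floors of the tower: by \eqref{eq:chi}, $|m|^p(y,\ell)=|m'(y)|^p$ when $\ell=\tau(y)-1$ and vanishes otherwise, so formula \eqref{eq:PL} gives $L(|m|^p)=g\cdot 1_{\text{base}}$ with $g:=P(|m'|^p)$ (the base being identified with $Y$). The proposition thus reduces to two uniform bounds: $|g|_\infty\le C\|v\|_\eta^p$, and $|L^{n-1}(g\cdot 1_{\text{base}})|_\infty\le C|g|_\infty$ for all $n\ge 1$.

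For the first bound, I start from $m'=\phi'-\chi'\circ F+\chi'$ together with $|\phi'(y)|\le|v|_\infty\tau(y)$ and $|\chi'|_\infty\le\|\chi'\|_\eta\le C\|v\|_\eta$, giving $|m'(y)|^p\le C\|v\|_\eta^p(\tau(y)^p+1)$. Conditions~(2) and~(4) imply bounded distortion for $F$, i.e.\ $\zeta(y_a)\le C\mu_Y(a)$ uniformly in $y\in Y$, so
\[
  g(y)\le C\|v\|_\eta^p\sum_{a\in\alpha}\zeta(y_a)\bigl(\tau(a)^p+1\bigr)\le C'\|v\|_\eta^p\bigl(|\tau|_p^p+1\bigr).
\]

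For the second bound, a preimage of $(y_0,\ell_0)$ under $f_\Delta^{n-1}$ lying in the base is parametrized, for some $j\ge 0$, by a $j$-cylinder $A$ of $F$ with representative $\tilde y_A$ satisfying $F^j\tilde y_A=y_0$ and $R_j(\tilde y_A)=N:=n-1-\ell_0$, where $R_j:=\sum_{i=0}^{j-1}\tau\circ F^i$. Collecting the Jacobian weights, one obtains
\[
  L^{n-1}(g\cdot 1_{\text{base}})(y_0,\ell_0)=\sum_{j\ge 0}P^j\bigl(1_{\{R_j=N\}}\,g\bigr)(y_0).
\]
Since $P1=1$, each summand is at most $|g|_\infty\,P^j(1_{\{R_j=N\}})(y_0)$, and bounded distortion for $F^j$ gives $P^j(1_{\{R_j=N\}})(y_0)\le C\mu_Y(\{R_j=N\})$. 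Strict monotonicity of $j\mapsto R_j(y)$ makes the sets $\{R_j=N\}$, $j\ge 0$, pairwise disjoint in $Y$, so $\sum_j\mu_Y(\{R_j=N\})\le 1$, finishing the estimate. The main obstacle is the combinatorial step of identifying the preimages and rewriting $L^{n-1}(g\cdot 1_{\text{base}})$ as iterates of $P$; once this identity is in hand, the uniform bound follows from the pairwise disjointness of the level sets $\{R_j=N\}$.
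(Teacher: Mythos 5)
Your proof is correct. The first half coincides with the paper's: you compute $L|m|^p$ explicitly via \eqref{eq:PL}, observe it is supported on the base where it equals $P(|m'|^p)$, and bound it by $C\|v\|_\eta^p$ using $|m'|\ll \tau\|v\|_\eta$ and the distortion estimate $\zeta(y_a)\ll\mu_Y(a)$, landing on $|\tau|_p^p\|v\|_\eta^p$ exactly as in the paper. Where you diverge is the passage from $L|m|^p$ to $L^n|m|^p$. The paper disposes of this in one line: since $L$ is positive and $L1=1$ (because $\mu_\Delta$ is $f_\Delta$-invariant), $L$ is a contraction on $L^\infty$, so $\big|L^n|m|^p\big|_\infty=\big|L^{n-1}(L|m|^p)\big|_\infty\le \big|L|m|^p\big|_\infty$. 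You instead re-derive this contraction property by hand in the special case at issue: the renewal-type identity $L^{n-1}(g\,1_{\mathrm{base}})(y_0,\ell_0)=\sum_{j\ge0}P^j(1_{\{R_j=N\}}g)(y_0)$, bounded distortion for $F^j$ giving $P^j(1_{\{R_j=N\}})\le C\mu_Y(\{R_j=N\})$, and the disjointness of the sets $\{R_j=N\}$ coming from strict monotonicity of $j\mapsto R_j(y)$. All of these steps are sound (the distortion bound for $j$-cylinders follows from conditions~(2) and~(4) by the usual telescoping argument, and $\{R_j=N\}$ is a union of $j$-cylinders so the cylinder-wise bound sums correctly), but the machinery is unnecessary and costs you an extra distortion constant where the soft argument gives constant $1$. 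The preimage decomposition you set up is genuinely useful elsewhere (e.g.\ for operator renewal theory), but for a pure $L^\infty$ bound the observation $|L^{n-1}h|_\infty\le|h|_\infty$ is all that is needed.
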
 

\begin{proof}
  Using~\eqref{eq:PL} and the definition of $m$, we have
  \[
    (L|m|^p)(y,\ell)
    = \begin{cases}
\sum_{a \in \alpha} \zeta(y_a) |m'(y_a)|^p
      , & \ell=0\\
      \hphantom{YYY}0, & 1\leq \ell\leq \tau(y)-1
    \end{cases}.
  \]
  Note that $|m'|\le 2|\chi'|_\infty+|\phi'| 
  \le 2|\chi'|_\infty+\tau|v|_\infty
\ll \tau\|v\|_\eta$. 
Also 
  $|1_a\zeta|_\infty \ll  \mu_Y(a)$ 
(see for example~\cite[Proposition~2.2]{KKM18}).
Hence
  \[
    L|m|^p 
    \ll \SMALL \sum_{a\in\alpha}\mu_Y(a)\tau(a)^p\|v\|_\eta^p
    = |\tau|_p^p\|v\|_\eta^p
    \ll \|v\|_\eta^p
    .
  \]
  Hence,
  \(\big|L^n|m|^p\big|_\infty \leq \big|L|m|^p\big|_\infty \ll \|v\|_\eta^p\)
  for all \(n \geq 1\).
\end{proof}

Let $\breve\phi=UL(m\otimes m)-\int_\Delta m\otimes m \,d\mu_\Delta:\Delta\to\R^{d\times d}$ where
$U$ is the Koopman operator $U\phi=\phi\circ f_\Delta$.

\begin{prop} \label{prop:breve}
$\big|\max_{k\le n}|\sum_{j=0}^{k-1}\breve\phi\circ f_\Delta^j|\big|_p
\le Cn^{1/2}\|v\|_\eta^2$.
\end{prop}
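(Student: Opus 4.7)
The plan is to build a martingale-coboundary decomposition $\breve\phi=\breve m+\breve\chi\circ f_\Delta-\breve\chi$ on $\Delta$ with $L\breve m=0$ and $|\breve m|_p\le C\|v\|_\eta^2$, then apply Burkholder's inequality for reverse martingales to the martingale part and control the coboundary using arguments analogous to those behind Proposition~\ref{prop:m}.

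The first step is an explicit calculation of $\breve\phi$ on $\Delta$. Since $m$ vanishes except at level $\ell=\tau(y)-1$, the formula~\eqref{eq:PL} gives $L(m\otimes m)(y,\ell)=H(y)\mathbf 1_{\ell=0}$ where $H:=P(m'\otimes m')$, so $\breve\phi(y,\ell)=-c$ for $\ell<\tau(y)-1$ and $\breve\phi(y,\tau(y)-1)=H(Fy)-c$, with $c=\int m\otimes m\,d\mu_\Delta$. The induced observable on $Y$ is thus $\breve\phi'(y):=\sum_{\ell<\tau(y)}\breve\phi(y,\ell)=H(Fy)-c\tau(y)$. Using $|m'|\le C\tau\|v\|_\eta$ together with $|1_a\zeta|_\infty\ll\mu_Y(a)$ (as in the proof of Proposition~\ref{prop:3}) gives $|H|_\infty\le C\|v\|_\eta^2$, $|c|\le C\|v\|_\eta^2$ and $|P\tau|_\infty\le C$, hence $|\breve\phi'|_p\le C\|v\|_\eta^2$.

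The second step is to construct a Gordin decomposition of $\breve\phi'$ on the Gibbs--Markov induced map $(Y,F)$. Since $P(H\circ F)=H$ (by $PU=I$), one has $P\breve\phi'=H-cP\tau$, a bounded, locally H\"older, mean-zero function on $Y$. By exponential decay of correlations for $P$ on the appropriate function space, the series $\breve\chi':=\sum_{k\ge1}P^k\breve\phi'=\sum_{k\ge0}P^k(H-cP\tau)$ converges with $|\breve\chi'|_\infty\le C\|v\|_\eta^2$, and $\breve m':=\breve\phi'-\breve\chi'\circ F+\breve\chi'$ satisfies $P\breve m'=0$ and $|\breve m'|_p\le C\|v\|_\eta^2$. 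I then lift to $\Delta$ following~\cite[Section~2.2]{KKM18}: set $\breve\chi(y,\ell):=\breve\chi'(y)-\ell c$ and $\breve m(y,\ell):=\breve m'(y)\mathbf 1_{\ell=\tau(y)-1}$. A direct check against the explicit form of $\breve\phi$ confirms $\breve\phi=\breve m+\breve\chi\circ f_\Delta-\breve\chi$ and $L\breve m=0$; the tower disintegration then gives $|\breve m|_p^p=|\tau|_1^{-1}|\breve m'|_p^p$, hence $|\breve m|_p\le C\|v\|_\eta^2$.

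Finally, since $L\breve m=0$, the partial sums $\sum_{j<k}\breve m\circ f_\Delta^j$ form a reverse martingale with respect to $\mathcal F_k:=f_\Delta^{-k}\mathcal B$, so Burkholder's inequality yields $\big|\max_{k\le n}|\sum_{j<k}\breve m\circ f_\Delta^j|\big|_p\le Cn^{1/2}\|v\|_\eta^2$. For the coboundary, I analyse the orbit structure on the tower: along the return times $n_K=\sum_{j<K}\tau\circ F^j$, one has $|\breve\chi\circ f_\Delta^{n_K}-\breve\chi|\le 2|\breve\chi'|_\infty$, while inside the last incomplete column the contribution is at most $\tau(F^K y)|c|$; combined with the crude stationary estimate $|\max_{K\le n}\tau\circ F^K|_p\le n^{1/p}|\tau|_p$ (as used in the proof of Proposition~\ref{prop:m}), this gives $\big|\max_{k\le n}|\breve\chi\circ f_\Delta^k-\breve\chi|\big|_p\le Cn^{1/p}\|v\|_\eta^2\le Cn^{1/2}\|v\|_\eta^2$ since $p\ge2$. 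The identity $\sum_{j<k}\breve\phi\circ f_\Delta^j=\sum_{j<k}\breve m\circ f_\Delta^j+\breve\chi\circ f_\Delta^k-\breve\chi$ completes the proof. The main obstacle will be the Gordin step: establishing $|\breve\chi'|_\infty\le C\|v\|_\eta^2$ requires that $H-cP\tau$ lies in a function space on which $P$ has a spectral gap, with the norm quantitatively controlled by $\|v\|_\eta^2$ and $|\tau|_p$; this parallels the treatment of $\chi'$ in~\cite{KKM18}, but is more delicate since $\breve\phi'$ is not the lift of a H\"older observable on $\Lambda$.
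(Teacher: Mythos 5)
Your argument is correct, and it is essentially the argument behind the result the paper cites here: the paper's own ``proof'' is just a reference to~\cite[Corollary~3.2]{KKM18}, which is obtained by exactly this kind of secondary martingale--coboundary decomposition of $\breve\phi=UL(m\otimes m)-\int m\otimes m\,d\mu_\Delta$ (explicit computation of $\breve\phi$ on the tower, a Gordin/spectral-gap construction of $\breve\chi'$ from the H\"older, mean-zero function $P\breve\phi'=H-cP\tau$ on $Y$, Burkholder for the $L$-null part, and the Proposition~\ref{prop:m}-type estimate for the coboundary). The steps you flag as delicate --- the H\"older bound $\|H\|_\eta\ll\|v\|_\eta^2$ via the weighted locally-H\"older norm of $m'\otimes m'$ (finite because $|\tau|_2<\infty$), and the $\tau$-weighting of $\mu_\Delta$ in the coboundary maximal estimate --- do go through exactly as in the treatment of $\chi$ there.
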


\begin{proof} See~\cite[Corollary~3.2]{KKM18}.
\end{proof}

\subsection{Moment estimates}
\label{sec:moments}

Given $v\in C^\eta(\Lambda,\R^d)$ with $\int_\Lambda v\,d\mu=0$, we define
\begin{align} \label{eq:BBS}
S_nv=\sum_{0\le j<n} v \circ T^j, \qquad
\BBS_nv=\sum_{0\le i<j<n} (v \circ T^i) \otimes (v \circ T^j).
\end{align}
The main result in this section is the estimate for iterated moments 
    $\big|\max_{k\le n}|\BBS_kv|\big|_{p-1}$ in the next theorem.

\begin{thm}[Iterated moments] 
  \label{thm:moments}
  For all \(n \geq 1\), 
  \[
    \big|\max_{k\le n}|S_kv|\big|_{2(p-1)}\le Cn^{1/2}\|v\|_\eta
    , \qquad
    \big|\max_{k\le n}|\BBS_kv|\big|_{p-1}\le Cn \|v\|_\eta^2
    .
  \]
\end{thm}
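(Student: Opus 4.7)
The plan is to lift both estimates to the Young tower $(\Delta,\mu_\Delta)$ and exploit the martingale-coboundary decomposition of Section~\ref{sec:mart}. Setting $\phi = v\circ\pi_\Delta$, the measure-preserving semiconjugacy $\pi_\Delta$ gives $S_k v\circ\pi_\Delta = S_k\phi$ and $\BBS_k v\circ\pi_\Delta = \BBS_k\phi$, so the desired $L^q$ estimates on $(\Lambda,\mu)$ are equivalent to the same estimates for $\phi$ on $(\Delta,\mu_\Delta)$. Writing $\phi = m + \psi$ with $\psi = \chi\circ f_\Delta - \chi$ and $m\in\ker L$, one has $S_k\phi = M_k + (\chi\circ f_\Delta^k - \chi)$ with $M_k = \sum_{j<k} m\circ f_\Delta^j$.

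For the first inequality, $\{m\circ f_\Delta^j\}$ is, after reversing time, a martingale difference sequence whose conditional $p$-th moments are uniformly $L^\infty$-bounded by Proposition~\ref{prop:3}. Burkholder--Davis--Gundy together with this conditional moment bound yields $\bigl|\max_{k\le n}|M_k|\bigr|_{2(p-1)} \le Cn^{1/2}\|v\|_\eta$, and the coboundary contribution $\chi\circ f_\Delta^k-\chi$ is absorbed using Proposition~\ref{prop:m}.

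For the iterated moment (the main novelty), I expand
\[
\BBS_k\phi = \SMALL\sum\nolimits_{i<j<k}(m_i+\psi_i)\otimes(m_j+\psi_j),
\]
with $m_i = m\circ f_\Delta^i$ and $\psi_i = \chi\circ f_\Delta^{i+1} - \chi\circ f_\Delta^i$, and treat the four resulting sums separately. The principal term is the pure martingale iterated sum $A_k = \sum_{j=1}^{k-1} M_j\otimes m_j$, itself a martingale transform of $\{m_j\}$ with predictable coefficients $M_j$. Burkholder followed by Cauchy--Schwarz gives
\[
\bigl|\max_{k\le n}|A_k|\bigr|_{p-1} \;\le\; C\,\bigl|\max_{j<n}|M_j|\bigr|_{2(p-1)}\cdot\bigl|({\SMALL\sum\nolimits_{j<n}}|m_j|^2)^{1/2}\bigr|_{2(p-1)} \;\le\; Cn\|v\|_\eta^2,
\]
where both factors are of order $n^{1/2}\|v\|_\eta$: the first by the ordinary moment bound just established, the second by a further application of Proposition~\ref{prop:3}.

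The three remaining terms containing $\psi$ are handled via Abel summation, using the telescoping $\psi_j = \chi\circ f_\Delta^{j+1} - \chi\circ f_\Delta^j$. For example,
\[
\SMALL\sum\nolimits_{j=1}^{k-1}M_j\otimes\psi_j \;=\; M_{k-1}\otimes(\chi\circ f_\Delta^k) \;-\; \SMALL\sum\nolimits_{j=0}^{k-2} m_j\otimes(\chi\circ f_\Delta^{j+1}),
\]
reducing the mixed term to a boundary contribution plus a further martingale transform, each controlled by H\"older with $|\chi|_{p-1}\le C\|v\|_\eta$ (Proposition~\ref{prop:m}), the ordinary moment bound, and Burkholder. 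The pure coboundary sum $\sum_{i<j<k}\psi_i\otimes\psi_j = \sum_j(\chi\circ f_\Delta^j - \chi)\otimes\psi_j$ is treated analogously using $\bigl|\max_{k\le n}|\chi\circ f_\Delta^k - \chi|\bigr|_p = O(n^{1/p}\|v\|_\eta)$ from Proposition~\ref{prop:m}. The main obstacle will be keeping the $L^q$ exponents balanced so that these H\"older/Burkholder estimates combine to give exactly $O(n\|v\|_\eta^2)$ in $L^{p-1}$ using only the $L^{p-1}$ integrability of $\chi$; Proposition~\ref{prop:breve} is available to supply the required centering of the drift $\sum_j m_j\otimes m_j$ should it need to be isolated from its mean.
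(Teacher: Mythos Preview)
There is a genuine integrability gap in your principal-term estimate. Your Cauchy--Schwarz step
\[
\bigl|\max_{k\le n}|A_k|\bigr|_{p-1}\le C\,\bigl|\max_{j<n}|M_j|\bigr|_{2(p-1)}\cdot\bigl|\bigl({\textstyle\sum_{j<n}}|m_j|^2\bigr)^{1/2}\bigr|_{2(p-1)}
\]
places both factors in $L^{2(p-1)}$, but neither lies there. Under the standing hypothesis $\tau\in L^p$, one has $|m(y,\ell)|\ll\tau(y)\|v\|_\eta$ and hence $m\in L^p$ is sharp; for $p>2$ one has $2(p-1)>p$ and $|m|_{2(p-1)}=\infty$ in general (e.g.\ for the intermittent map with $\gamma$ just below $1/p$). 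In particular $|M_1|_{2(p-1)}=|m|_{2(p-1)}=\infty$, so the right-hand side above is infinite. Proposition~\ref{prop:3} gives $\E[|m_j|^p\mid\cG_{j-1}]\le C$ a.s., but bounded conditional $p$-th moments do \emph{not} yield $|M_n|_{2(p-1)}<\infty$: already for an i.i.d.\ sequence with $\E|m|^p<\infty$ but $\E|m|^{2(p-1)}=\infty$, Rosenthal's inequality forces $|M_n|_{2(p-1)}=\infty$. The same obstruction hits your first-inequality argument (the $L^{2(p-1)}$ bound on $S_k v$ in~\cite{KKM18} is \emph{not} obtained by splitting into $M_k$ plus coboundary in $L^{2(p-1)}$, since $\chi\notin L^{2(p-1)}$ either), and it recurs in your mixed terms: the boundary piece $M_{k-1}\otimes(\chi\circ f_\Delta^k)$ cannot be put in $L^{p-1}$ via H\"older using only $M\in L^p$ and $\chi\in L^{p-1}$.

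The paper circumvents this by decomposing only the \emph{first} factor, writing $\BBS_n\phi=I_n+J_n$ with second slot kept as the bounded observable $\phi$. Then $I_n=\sum_{j<n}(\chi\circ f_\Delta^j-\chi)\otimes(\phi\circ f_\Delta^j)$ is controlled trivially by $|\phi|_\infty\cdot n|\chi|_{p-1}$. For $J_n=\sum_{i<j}m_i\otimes\phi_j$ the crucial reorganisation is $J_n=\sum_{\ell=2}^n X_{n,\ell}$ with $X_{n,\ell}=\bigl(m\otimes\{(S_{\ell-1}\phi)\circ f_\Delta\}\bigr)\circ f_\Delta^{n-\ell}$, a martingale difference array, and the key estimate
\[
|X_{n,\ell}|_p^p\le\int_\Delta L|m|^p\cdot|S_{\ell-1}\phi|^p\,d\mu_\Delta\le\bigl|L|m|^p\bigr|_\infty\,|S_{\ell-1}\phi|_p^p\ll\ell^{p/2}\|v\|_\eta^{2p},
\]
uses the $L^\infty$ bound of Proposition~\ref{prop:3} to decouple $m$ from $S_{\ell-1}\phi$ \emph{inside a single $L^p$ integral}, never asking for more than $m\in L^p$. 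Burkholder in $L^p$ then closes the argument. Your four-term splitting loses exactly this structure.
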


\begin{proof}
   Since $p\ge2$, 
  the estimate for $S_nv$ is given in~\cite[Corollary~2.10]{KKM18}.
  It remains to prove the bound for $\BBS_nv$, equivalently
$\BBS_n\phi=
\sum_{0\le i<j<n} (\phi \circ f_\Delta^i) \otimes (\phi \circ f_\Delta^j)$.
  Using~\eqref{eq:mcd}, 
  \[
    \BBS_n\phi
    = \sum_{0\le j<n}
      (\chi \circ f_\Delta^j-\chi) \otimes (\phi \circ f_\Delta^j)
      + \sum_{0\le i< j<n} (m \circ f_\Delta^i) \otimes (\phi \circ f_\Delta^j)
    = I_n + J_n.
  \]
  By Proposition~\ref{prop:m},
  \[
    \big|\max_{k\le n}|I_k|\big|_{p-1}
\le |\phi|_\infty \sum_{0\le j<n}|\chi \circ f_\Delta^j-\chi|_{p-1}
    \leq 2n|v|_\infty
    |\chi|_{p-1} 
    \ll n\|v\|_\eta^2
    .
  \]
  Next,
    $J_n = \sum_{i=0}^{n-2} (m\circ f_\Delta^i) \otimes \big(\big(\sum_{j=1}^{n-i-1}\phi\circ f_\Delta^j)\circ f_\Delta^i\big) = \sum_{\ell=2}^nX_{n,\ell}$ where
  \[
    X_{n,\ell} = \Big( m \otimes \sum_{j=1}^{\ell-1} \phi \circ f_\Delta^j \Big) \circ f_\Delta^{n-\ell}
     = \big( m \otimes \{(S_{\ell-1}\phi)\circ f_\Delta\} \big) \circ f_\Delta^{n-\ell}
    .
  \]  
Now, $|S_n\phi|_p\le |S_n\phi|_{2(p-1)}\ll n^{1/2}\|v\|_\eta$ since $p\ge2$.
  Hence by Proposition \ref{prop:3},  
  \begin{align*}
    |X_{n,\ell}|_p^p
   &  \le \int_{\Delta} |m|^p | (S_{\ell-1}\phi)\circ f_\Delta |^p
      \, d\mu_\Delta
     =\int_{\Delta} L|m|^p |S_{\ell-1}\phi|^p
      \, d\mu_\Delta
    \\ & \le \big| L|m|^p\big|_\infty |S_{\ell-1}\phi|_p^p
    \ll \ell^{p/2} \|v\|_\eta^{2p}
    \ll n^{p/2} \|v\|_\eta^{2p},
  \end{align*}
so $|X_{n,\ell}|_p^2\ll n\|v\|_\eta^4$.

Let $\cM$ denote the underlying $\sigma$-algebra on $(\Delta,\mu_\Delta)$ and define
$\cG_{n,\ell}=f_\Delta^{-(n-\ell)}\cM$, $2\le \ell \le n$.  
  Since \(L m = 0\),
  \[
    L^{n+1-\ell} X_{n,\ell} = L\big( m \otimes \{(S_{\ell-1}\phi)\circ f_\Delta\} \big)=Lm \otimes (S_{\ell-1}\phi) =0
  \]
  for all \(\ell\).
It follows (cf.\ \cite[Proposition~2.9]{KKM18}) that
$\{X_{n,\ell},\cG_\ell;\,2\le \ell\le n\}$ is a sequence of martingale differences.
  Working coordinatewise, by Burkholder's inequality~\cite{Burkholder73},
  \[
    \big|\max_{k\le n}|J_k|\big|_p^2
     \ll \Big|\Big(\sum_{\ell=2}^n X_{n,\ell}^2\Big)^{1/2}\Big|_p^2
     = \Big| \sum_{\ell=2}^n  X_{n,\ell}^2\Big|_{p/2}
     \leq \sum_{\ell=2}^n  |X_{n,\ell}^2|_{p/2}
    = \sum_{\ell=2}^n |X_{n,\ell}|_p^2 \ll n^2\|v\|_\eta^4,
  \]
  and so
  \(
    \big| \max_{k\le n}|J_k|\big|_p \ll n \|v\|_\eta^2
    . 
  \) 
  This completes the proof.
\end{proof}

\paragraph{Moments on $\Delta$}

It is standard that the moment estimates for $v:\Lambda\to\R^d$ follow from corresponding estimates for lifted observables $\phi=v\circ\pi_\Delta:\Delta\to\R^d$.
In Proposition~\ref{prop:EeeE}, we need such an estimate for an observable on $\Delta$ that need not be the lift of an observable on $\Lambda$.
Hence, we recall now how to derive moment estimates on $\Delta$.

We define a metric on $\Delta$ based on the metric $d_\Lambda$ on $Y$:
\begin{equation}
    \label{eq:sepapa}
    d_\Delta((y, \ell), (y', \ell'))
    = \begin{cases}
        d_\Lambda(Fy, Fy') & \ell = \ell' \text{ and } y,y' \text{ are in the same } a \in \alpha
        \\
        \diam \Lambda & \text{else}
    \end{cases}
    .
\end{equation}

\begin{rmk}
    In~\eqref{eq:sepapa}, if we use a symbolic metric on Y in place of $d_\Lambda$,
    then $d_\Delta$ is the usual symbolic metric on $\Delta$.
\end{rmk}

As usual, $\| \ \|_\eta$ denotes the H\"older norm on $\Delta$.
From the definition of nonuniformly expanding map,
$d_\Lambda(T^\ell y, T^{\ell'} y') \leq C_1 d_\Delta((y,\ell), (y',\ell'))$; hence
if $v : \Lambda \to \R^d$ is H\"older then so is
its lift $\phi = v \circ \pi : \Delta \to \R^d$.
Moreover, $f_\Delta$ is itself a nonuniformly expanding map on $(\Delta, d_\Delta)$
with the same constants as $T$, so Theorem~\ref{thm:moments} yields:

\begin{lemma} \label{lem:phi}
Let $\phi:\Delta\to\R^d$ with $\|\phi\|_\eta<\infty$, such that $\int_\Delta \phi\,d\mu_\Delta=0$.
Define $S_n\phi=\sum_{0\le j<n} \phi \circ f_\Delta^j$ and
\(
\BBS_n\phi=\sum_{0\le i<j<n} (\phi \circ f_\Delta^i) \otimes (\phi \circ f_\Delta^j).
\)
Then
  \[
    \big|\max_{k\le n}|S_k\phi|\big|_{2(p-1)}\le Cn^{1/2}\|\phi\|_\eta
    \qquad \text{and} \qquad
    \big|\max_{k\le n}|\BBS_k\phi|\big|_{p-1}\le Cn \|\phi\|_\eta^2
    . 
  \]

\vspace{-6ex}
\qed
\end{lemma}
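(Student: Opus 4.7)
The plan is to read the lemma as Theorem~\ref{thm:moments} applied to the tower system $(\Delta, f_\Delta, \mu_\Delta, d_\Delta)$ in place of $(\Lambda, T, \mu, d_\Lambda)$, with $\phi$ in place of $v$. As indicated in the paragraph preceding the statement, the only thing to verify is that $f_\Delta$ is itself a nonuniformly expanding map on $(\Delta, d_\Delta)$ in the sense of Section~\ref{sec:NUE}, with return time in $L^p$ and with constants controlled by those of $T$ and $\diam\Lambda$.

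To carry this out I would take the inducing set on the tower to be $\hat Y = Y\times\{0\}$, with partition $\hat\alpha = \{a\times\{0\} : a\in\alpha\}$ and return time $\hat\tau(y,0)=\tau(y)$. Under the identification $(y,0)\leftrightarrow y$, the induced map $f_\Delta^{\hat\tau}$ coincides with $F:Y\to Y$, and the restriction of $d_\Delta$ to $\hat Y$ equals the pullback $d_\Lambda(F\cdot,F\cdot)$. Condition~(1) of Section~\ref{sec:NUE} is then immediate from the corresponding bijectivity of $F$, and condition~(4) holds with the same $C_1,\eta$ since the density $\zeta$ restricted to $\hat Y$ agrees, after the identification, with $\zeta_0$ for $F$. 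Conditions~(2) and~(3) require a short case analysis against the two branches of~\eqref{eq:sepapa}: the ``same partition element'' case reduces to~(2)--(3) for $T$ itself, while in the ``else'' case the value $\diam\Lambda$ on the appropriate side makes both inequalities automatic (after at most absorbing a harmless factor into $C_1$, or equivalently enlarging the ``else'' value in~\eqref{eq:sepapa} by a factor $\ge\bbeta$). Finally $\hat\tau=\tau\in L^p(\hat Y)$ by hypothesis, with the same norm.

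With $f_\Delta$ identified as a nonuniformly expanding map on $(\Delta,d_\Delta)$ whose constants depend continuously only on those for $T$ and on $\diam\Lambda$, the hidden constant in Theorem~\ref{thm:moments} applied to $f_\Delta$ has the same form as that for $T$. Since $\int_\Delta\phi\,d\mu_\Delta=0$ and $\|\phi\|_\eta<\infty$ by hypothesis, Theorem~\ref{thm:moments} applied to $\phi$ on $\Delta$ delivers both asserted bounds. I do not expect any substantive obstacle; the only care required is tracking the constants through the metric change~\eqref{eq:sepapa}, which is entirely routine.
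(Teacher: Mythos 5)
Your proposal is correct and is essentially the paper's own argument: the paper's entire proof consists of the remark preceding the lemma that $f_\Delta$ is itself a nonuniformly expanding map on $(\Delta,d_\Delta)$ with the same constants as $T$, followed by an appeal to Theorem~\ref{thm:moments}. You merely supply the routine verification of conditions (1)--(4) for the inducing data $\hat Y=Y\times\{0\}$, $\hat\tau=\tau$, including the one genuinely fussy point (the expansion constant $\bbeta$ in the ``else'' branch of~\eqref{eq:sepapa}, where images $Fy,Fy'$ may land in different partition elements), which you resolve correctly by a harmless rescaling of the metric that only perturbs the constants continuously.
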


\subsection{Drift and diffusion coefficients}
\label{sec:drift}

Let $S_nv$, $\BBS_nv$ be as in~\eqref{eq:BBS} and define $\Sigma,\,E\in \R^{d\times d}$,
\begin{align} \label{eq:drift}
  \Sigma & = \lim_{n \to \infty} \frac1n
  \int_\Lambda S_nv \otimes S_nv
  \, d\mu
  , 
  \qquad
  E  = \lim_{n\to\infty} \frac1n
  \int_{\Lambda} \BBS_nv \, d\mu
  . 
\end{align}

\begin{prop}
  \label{prop:drift}
  The limits in~\eqref{eq:drift} exist and are given by
  \[
\SMALL
\Sigma = \int_\Delta m \otimes m \, d\mu_\Delta, \qquad
E = \int_\Delta \chi \otimes \phi \, d\mu_\Delta.
\]
Moreover, for all $n\ge1$,
\[
  \Big|\frac1n \int_\Lambda S_nv \otimes S_nv \, d\mu-\Sigma\Big|
\le C\|v\|_\eta^2 n^{1/p-1/2},
\qquad
\Big|\frac1n\int_{\Lambda} \BBS_nv \, d\mu-E\Big|\le C\|v\|_\eta^2(n^{-1/2}+n^{-(p-2)}).
\]
\end{prop}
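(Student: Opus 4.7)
The plan is to reduce via the semiconjugacy $\pi_\Delta$ (with $\phi=v\circ\pi_\Delta$) to proving the corresponding assertions on $(\Delta,\mu_\Delta)$, then to use the martingale-coboundary decomposition~\eqref{eq:mcd}, $\phi=m+\chi\circ f_\Delta-\chi$. Set $V_n=S_n\phi$, $M_n=\sum_{j<n}m\circ f_\Delta^j$, and $\chi_n=\chi\circ f_\Delta^n$, so $V_n=M_n+\chi_n-\chi$. The essential inputs are: the orthogonality $\int m_i\otimes m_j\,d\mu_\Delta=0$ for $i\ne j$, which follows from $Lm=0$ since $\int m\cdot g\circ f_\Delta^j\,d\mu_\Delta=\int L^jm\cdot g\,d\mu_\Delta=0$ for $j\ge 1$; the Burkholder bound $|M_n|_p\ll n^{1/2}\|v\|_\eta$ as in the proof of Theorem~\ref{thm:moments}; and $|\chi|_{p-1}\ll\|v\|_\eta$ together with $|\chi_n-\chi|_p\ll n^{1/p}\|v\|_\eta$ from Proposition~\ref{prop:m}.

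For $\Sigma$ I would expand $V_n\otimes V_n$ into $M_n\otimes M_n$ plus three cross terms. Orthogonality yields $\int M_n\otimes M_n\,d\mu_\Delta=n\int m\otimes m\,d\mu_\Delta$, which identifies $\Sigma=\int m\otimes m\,d\mu_\Delta$. Cauchy--Schwarz bounds the two mixed terms by $|M_n|_2|\chi_n-\chi|_2\ll n^{1/2+1/p}\|v\|_\eta^2$ and the pure-coboundary term by $|\chi_n-\chi|_2^2\ll n^{2/p}\|v\|_\eta^2$; since $p\ge 2$ forces $2/p-1\le 1/p-1/2$, dividing by $n$ produces the claimed rate $n^{1/p-1/2}$.

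For $E$ I would use the telescoping identity $\BBS_n\phi=\sum_{k=0}^{n-1}V_k\otimes\phi_k$ and substitute $V_k=M_k+\chi_k-\chi$, $\phi_k=m_k+\chi_{k+1}-\chi_k$ to get four bilinear sums. The two sums involving $M_k\otimes(\cdot)$ integrate to zero: each summand has the form $\int m_i\otimes g\,d\mu_\Delta=\int L^{j}m\otimes\tilde g\,d\mu_\Delta=0$ for some $j\ge 1$ (with $g\in\{m_k,\chi_{k+1},\chi_k\}$ and $i<k$). For the other two sums, $f_\Delta$-invariance reduces the stationary integrands $\chi_k\otimes m_k$, $\chi_k\otimes\chi_{k+1}$, $\chi_k\otimes\chi_k$ to constants in $k$, while the $k$-dependent integrals $\int\chi\otimes\chi_{k+1}$ and $\int\chi\otimes\chi_k$ telescope. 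The leading contribution is $n\int\chi\otimes(m+\chi\circ f_\Delta-\chi)\,d\mu_\Delta=n\int\chi\otimes\phi\,d\mu_\Delta$, confirming $E=\int\chi\otimes\phi\,d\mu_\Delta$; the remaining residuals combine via $V_n=M_n+\chi_n-\chi$ into the clean identity
\begin{equation*}
  \textstyle\int\BBS_n\phi\,d\mu_\Delta-nE=-\int\chi\otimes V_n\,d\mu_\Delta.
\end{equation*}

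The main obstacle is bounding $|\int\chi\otimes V_n\,d\mu_\Delta|$, since Cauchy--Schwarz fails for $p<3$ when $\chi$ lies only in $L^{p-1}$. I would apply H\"older in the conjugate pair $(p-1,(p-1)/(p-2))$ to get $|\int\chi\otimes V_n|\le|\chi|_{p-1}|V_n|_{(p-1)/(p-2)}$. When $p\ge 5/2$, so $(p-1)/(p-2)\le 2(p-1)$, Theorem~\ref{thm:moments} gives $|V_n|_{(p-1)/(p-2)}\le|V_n|_{2(p-1)}\ll n^{1/2}\|v\|_\eta$, producing the $n^{-1/2}$ contribution. When $p\in[2,5/2]$, I would interpolate $|V_n|_{(p-1)/(p-2)}$ between $|V_n|_{2(p-1)}\ll n^{1/2}\|v\|_\eta$ and the trivial $|V_n|_\infty\le n|v|_\infty\ll n\|v\|_\eta$; log-convexity of $L^r$-norms gives interpolation exponent $2(p-2)$, hence $|V_n|_{(p-1)/(p-2)}\ll n^{3-p}\|v\|_\eta$ and the $n^{-(p-2)}$ contribution. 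Combining produces the stated $n^{-1/2}+n^{-(p-2)}$ rate.
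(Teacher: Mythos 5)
Your rate computations agree with the paper's: for $\Sigma$ the paper simply cites \cite[Corollary~2.12]{KKM18}, whose proof is the expansion you give, and for $E$ the paper uses exactly your H\"older/interpolation step with conjugate exponents $\big(p-1,(p-1)/(p-2)\big)$ after reducing to $p\le\frac52$. However, there are two genuine gaps.

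First, the four-way bilinear expansion of $\int_\Delta\BBS_n\phi\,d\mu_\Delta$ is not termwise legitimate when $p<3$. By~\eqref{eq:chi} one has $|\chi(y,\ell)|\ll\tau(y)\|v\|_\eta$ and $\chi$ lies only in $L^{p-1}(\Delta)$, so $\int_\Delta|\chi|^2\,d\mu_\Delta$ is comparable to $\int_Y\tau^3\,d\mu_Y$ and is infinite whenever $\tau\notin L^3(Y)$. Hence the ``stationary integrands $\chi_k\otimes\chi_{k+1}$, $\chi_k\otimes\chi_k$'' that you propose to reduce to constants in $k$ may have divergent integrals, and telescoping them is meaningless; similarly, the duality step $\int m_i\otimes\chi_k\,d\mu_\Delta=\int (L^{k-i}m)\otimes\chi\,d\mu_\Delta$ requires $m\,(\chi\circ f_\Delta^{k-i})\in L^1$, which H\"older supplies only for $p\ge(3+\sqrt5)/2$, and you offer no other justification. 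The paper avoids all of this by substituting $m+\chi\circ f_\Delta-\chi$ only into the \emph{first} factor $S_j\phi$ while keeping the bounded $\phi\circ f_\Delta^j$ as the second factor; every resulting term then pairs an $L^1$ function with an $L^\infty$ one, and the identity $\int_\Delta\BBS_n\phi\,d\mu_\Delta=n\int_\Delta\chi\otimes\phi\,d\mu_\Delta-\int_\Delta\chi\otimes S_n\phi\,d\mu_\Delta$ (your ``clean identity'') follows cleanly. You should derive it that way.

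Second, the proposition asserts existence of the limits for all $p\ge2$, and at $p=2$ every bound you produce is trivial: $n^{1/p-1/2}=1$ for $\Sigma$ and $n^{-(p-2)}=1$ for $E$, so your argument does not show that either limit exists in that case. For $E$ the paper obtains existence at $p=2$ by dominated convergence, using $|\chi\otimes(n^{-1}S_n\phi)|\le|\chi|\,|v|_\infty\in L^1$ and $n^{-1}S_n\phi\to0$ almost surely; for $\Sigma$ one needs the refined estimate of Proposition~\ref{prop:m}, namely $|\chi\circ f_\Delta^n-\chi|_2\le C\|v\|_\eta\big(n^{1/q}+n^{1/2}|1_{\{\tau\ge n^{1/q}\}}\tau|_2\big)=o(n^{1/2})$ for $q>2$, rather than the crude $O(n^{1/p})$ bound you invoke.
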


\begin{proof}
The limit for $\Sigma$ is obtained in  \cite[Corollary~2.12]{KKM18}. 
The proof of 
\cite[Corollary~2.12]{KKM18} contains the estimate
\[
  \Big|\frac1n \int_\Lambda S_nv \otimes S_nv \, d\mu-\Sigma\Big|
\ll n^{-1/2}\|v\|_\eta |\chi\circ f_\Delta^n-\chi|_p,
\]
so the convergence rate for $\Sigma$ follows from Proposition~\ref{prop:m}.
  
Next, we note that 
  \(|\chi\otimes (n^{-1}S_n\phi)|_1 \le |\chi|_1|v|_\infty<\infty\) 
since $\chi\in L^{p-1}\subset L^1$.   Also 
$n^{-1}S_n\phi\to0$
  almost surely by the pointwise ergodic theorem. 
Hence it follows from the dominated convergence theorem that
\[
\lim_{n\to\infty} 
    \frac1n\int_{\Delta}\chi\otimes S_n\phi \, d\mu_\Delta=0.
\]
  Since $Lm=0$, we have $\int_\Delta (m\circ f_\Delta^i)\otimes(\phi\circ f_\Delta^j)\,d\mu_\Delta=0$ 
  for all $i<j$. Hence by~\eqref{eq:mcd},
  \begin{align*}
    \int_{\Delta} 
     \BBS_n\phi\,d\mu_\Delta
     & =
    \int_{\Delta} \sum_{j=1}^{n-1} (\chi\circ f_\Delta^j-\chi) \otimes (\phi\circ f_\Delta^j) \, d\mu_\Delta
     = n\int_\Delta \chi\otimes\phi\,d\mu_\Delta
    - 
    \int_{\Delta}\chi\otimes S_n\phi \,d\mu_\Delta .
  \end{align*}
It follows that $E=\lim_{n\to\infty}\frac1n\int_{\Delta} \BBS_n\phi
\, d\mu_\Delta=\int_\Delta \chi\otimes\phi\,d\mu_\Delta$.

To obtain the convergence rate for $E$, we may suppose without loss that $p\in(2,\frac52]$.
Write $(p-1)^{-1}+q^{-1}=1$ where $q\in[3,\infty)$.
It follows from H\"older's inequality and Proposition~\ref{prop:m} that $|\chi\otimes S_n\phi|_1\le |\chi|_{p-1}|S_n\phi|_q
\ll\|v\|_\eta |S_n\phi|_q$.
By Theorem~\ref{thm:moments},
\[
\int_\Delta |S_n\phi|^q\,d\mu_\Delta
 \le |S_n\phi|_\infty^{q-2(p-1)}
\int_\Delta |S_n\phi|^{2(p-1)}\,d\mu_\Delta
 \ll \|v\|_\eta^q\, n^{q-2(p-1)}n^{p-1}
= \|v\|_\eta^q\, n^{q-(p-1)}.
\]
Hence
$|\chi\otimes S_n\phi|_1\ll 
\|v\|_\eta^2\, n^{1-(p-1)/q}
=\|v\|_\eta^2\, n^{3-p}$
and the result follows.
\end{proof}

For later use, we record the following result:

\begin{prop}
  \label{prop:EeeE}
For $n\ge1$,
  \[
    \biggl|
      \max_{k \leq n} \Big| \sum_{j=0}^{k-1} 
      \big((\chi \otimes \phi) \circ f_\Delta^j - E\big) \Big|
    \biggr|_1
    \leq C \|v\|_\eta^2 \Big(
      n^{3/4} + n \int_Y \tau^2 1_{\{\tau \geq n^{1/4}\}} \, d\mu_Y
    \Big)
    .
  \]
\end{prop}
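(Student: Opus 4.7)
The plan is to apply a truncation argument reducing to Lemma~\ref{lem:phi}. Let $\psi=\chi\otimes\phi$. The obstruction to applying Lemma~\ref{lem:phi} directly is that $\psi$ need not be bounded or H\"older on $\Delta$: from $\chi(y,\ell)=\chi'(y)+\sum_{k=0}^{\ell-1}\phi(y,k)$ together with $\|\chi'\|_\eta\le C\|v\|_\eta$ and $|\phi|_\infty\le\|v\|_\eta$, one gets $|\chi(y,\ell)|\le C\tau(y)\|v\|_\eta$, and similarly the H\"older constant of $\chi$ restricted to a single $(a,\ell)$ piece is $\le C\tau(a)\|v\|_\eta$ by the slow-expansion condition~(3).

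First I would truncate at the scale $M=n^{1/4}$. Set $A=\{(y,\ell)\in\Delta:\tau(y)<M\}$, so that $1_A$ is locally constant on each $(a,\ell)$-piece (since $\tau$ is constant on $a$). Writing $\tilde\psi=\psi 1_A$, this local constancy of $1_A$ implies $\tilde\psi$ is H\"older on $(\Delta,d_\Delta)$ with $\|\tilde\psi\|_\eta\le CM\|v\|_\eta^2$. Let $\bar\psi=\int_\Delta\tilde\psi\,d\mu_\Delta$. Lemma~\ref{lem:phi} applied coordinate-wise to $\tilde\psi-\bar\psi$ (which has mean zero and H\"older norm $\le 2CM\|v\|_\eta^2$) gives, using $|\cdot|_1\le|\cdot|_{2(p-1)}$,
\[
\Big|\max_{k\le n}\Big|\sum_{j=0}^{k-1}(\tilde\psi\circ f_\Delta^j-\bar\psi)\Big|\Big|_1
\le Cn^{1/2}M\|v\|_\eta^2 = Cn^{3/4}\|v\|_\eta^2.
\]

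For the remainder $\psi-\tilde\psi=\psi 1_{A^c}$, I would use the trivial triangle inequality in $L^1$, together with invariance of $\mu_\Delta$:
\[
\Big|\max_{k\le n}\Big|\sum_{j=0}^{k-1}(\psi-\tilde\psi)\circ f_\Delta^j\Big|\Big|_1
\le n\,|\psi 1_{A^c}|_1
\le Cn\|v\|_\eta^2\int_\Delta \tau\,1_{\{\tau\ge M\}}\,d\mu_\Delta.
\]
Converting to $Y$ via $d\mu_\Delta=(\int_Y\tau\,d\mu_Y)^{-1}d\mu_Y\times\#$ yields $\int_\Delta \tau\,1_{\{\tau\ge M\}}d\mu_\Delta = (\int_Y\tau\,d\mu_Y)^{-1}\int_Y \tau^2 1_{\{\tau\ge M\}}d\mu_Y$, giving the stated second term. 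Finally, the constant-shift discrepancy $\bar\psi-E=-\int(\psi-\tilde\psi)\,d\mu_\Delta$ satisfies $n|\bar\psi-E|\le n|\psi 1_{A^c}|_1$, which is absorbed into the same bound. Adding the three contributions produces the required estimate.

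The only real obstacle is verifying that $\tilde\psi$ is genuinely H\"older on $(\Delta,d_\Delta)$ with the right constant. This is the point where the definition~\eqref{eq:sepapa} of $d_\Delta$ is essential: nontrivial H\"older comparisons only occur between points in the same $(a,\ell)$-piece, where $\tau$, and hence $1_A$, is constant, so the indicator does not destroy regularity; and the H\"older norm of $\chi$ restricted to $\{\tau<M\}$ is linear in $M$ because of the telescoping sum of $\phi$-terms in $\chi$. Once this is in hand, the choice $M=n^{1/4}$ is dictated by balancing $n^{1/2}M$ against $n/M$-type scales in the tail contribution, though here the tail is kept in integral form so only the H\"older part pins down $M$.
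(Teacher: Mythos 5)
Your proposal is correct and follows essentially the same route as the paper: the paper also splits $\chi\otimes\phi-E$ into the part supported on $\{\tau<q\}$ (shown to be H\"older with norm $\ll q\|v\|_\eta^2$ and handled by Lemma~\ref{lem:phi}) plus the tail on $\{\tau\ge q\}$ (handled by the $L^1$ triangle inequality and the bound $|\chi\otimes\phi|\ll\|v\|_\eta^2\tau$), with the same choice $q=n^{1/4}$. Your treatment of the centering discrepancy $\bar\psi-E$ is just a cosmetic rearrangement of the paper's definition of the centered pieces $U$ and $V$.
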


\begin{proof}
  Fix $q>0$, and define 
  \[
\psi : \Delta \to \R^{d \times d}, \qquad
    \psi(y, \ell) = (\chi\otimes\phi) (y, \ell) 1_{\{\tau(y) \geq q\}}
    .
  \]
  By~\eqref{eq:chi},
\(
|\chi\otimes\phi|(y, \ell)
\le
(|\chi'|_\infty+\ell|v|_\infty)|v|_\infty \ll \|v\|_\eta^2\tau(y).
\)
Hence
\begin{equation} \label{eq:psi1}
    |\psi|_1 
    \ll \|v\|_\eta^2 \int_\Delta \tau(y) 1_{\{\tau(y) \geq q\}} \, d\mu_\Delta(y, \ell)
    \leq \|v\|_\eta^2 \int_Y \tau^2 1_{\{\tau \geq q\}} \, d\mu_Y.
\end{equation}
  
Write $\chi\otimes\phi-E=U+V$ where
  \[
U = \psi - \int_\Delta \psi \, d\mu_\Delta,
\qquad  V = \chi\otimes \phi -\psi  - \int_\Delta(\chi\otimes\phi-\psi) \, d\mu_\Delta.
\]

  By~\eqref{eq:psi1},
\[
    \Big| \max_{k \le n} |S_kU| \Big|_1
    \leq \Big| \sum_{j < n} |U \circ f_\Delta^j| \Big|_1
    \leq n |U|_1 \le 2n|\psi|_1
    \ll n \|v\|_\eta^2 
   \int_Y \tau^2 1_{\{\tau \geq q\}} \, d\mu_Y
    .
\]

Next, $V(y,\ell)=(\chi\otimes \phi)(y,\ell)1_{\{\tau< q\}}-\int_\Delta (\chi\otimes\phi)1_{\{\tau< q\}}\,d\mu_\Delta$.
By~\eqref{eq:chi},
\begin{align*}
|\chi(y,\ell)-\chi(y',\ell)| 
& \le |\chi'(y)-\chi'(y')|+\sum_{k=0}^{\ell-1}|\phi(y,k)-\phi(y',k)| \\
& \ll \|v\|_\eta d_\Lambda(y,y')^\eta +  \|\phi\|_\eta \sum_{k=0}^{\ell-1} d_\Delta((y,k), (y',k))^\eta
 \\ & \ll \|v\|_\eta \tau(y) d_\Delta((y, \ell), (y', \ell))^\eta.
\end{align*}
Here we used that $\|\phi\|_\eta \ll \|v\|_\eta$ and $d_\Lambda(y,y') \leq d_\Delta((y,\ell), (y', \ell))$.

A simpler calculation shows that $|\chi(y,\ell)|\ll\tau(y)\|v\|_\eta$.
 It follows that 
  \(\|V\|_{\eta} \ll q \|v\|_\eta^2\).
  By Lemma~\ref{lem:phi},
  \[
    \big| \max_{k \le n} |S_kV| \big|_1
    \ll \|V\|_{\eta}\, n^{1/2}
    \ll q\|v\|_\eta^2\, n^{1/2}
    .
  \]
The result follows by taking \(q = n^{1/4}\).
 \end{proof}

\begin{rmk}  Nonuniformly expanding maps are mixing up to a finite cycle.  
When they are mixing (in particular, if $\gcd\{\tau(a):a\in\alpha\}=1$),
then we have formulas of Green-Kubo type for $\Sigma$ and $E$  in~\eqref{eq:drift}, namely
\[
\Sigma  = \int_\Lambda v\otimes v\,d\mu +\sum_{n=1}^{\infty}
\int_\Lambda \big(v\otimes (v\circ T^n)+(v\circ T^n)\otimes v\big)\,d\mu,
\quad
E  = \sum_{n=1}^{\infty} \int_\Lambda v\otimes (v\circ T^n)\,d\mu.
\]
\end{rmk}

\section{Families of nonuniformly expanding maps}
\label{sec:NUEn}

In this section, we prove the iterated WIP and iterated moment estimates 
for uniform families of nonuniformly expanding maps.

\subsection{Iterated WIP and iterated moments}
\label{sec:iterated}

Throughout, $T_n:\Lambda_n\to\Lambda_n$, $n\ge1$,
is a family of nonuniformly expanding maps as in Section~\ref{sec:NUE} with absolutely continuous ergodic $T_n$-invariant probability measures $\mu_n$.
To each $T_n$ there is associated an induced uniformly expanding map $F_n:Y_n\to Y_n$ with ergodic invariant probability measure $\mu_{Y_n}$ and a return time $\tau_n\in L^p(Y_n)$ where $p\ge2$.

We assume that $T_n$ is a {\em uniform family of order $p\ge2$}
 in the sense of~\cite{KKM18}.  This means 
that the expansion and distortion constants $C_1\ge1$, $\bbeta>1$, $\eta\in(0,1]$ for the induced maps $F_n$ can be chosen independent of $n$ and
that the family $\{\tau_n^p\}$ is uniformly integrable on $(Y_n,\mu_{Y_n})$,
i.e. \( \sup_n \int_{Y_n} \tau_n^p 1_{\{\tau_n \geq q\}} \, d\mu_{Y_n} \to 0 \)
as \(q \to \infty\).
Let $v_n:\Lambda_n\to\R^d$, $n\ge1$, be a family of observables with 
$\sup_{n\ge1}\|v_n\|_\eta<\infty$ and
$\int_{\Lambda_n}v_n\,d\mu_n=0$.

Let $f_{\Delta_n}:\Delta_n\to\Delta_n$ be the corresponding family of Young tower maps, with invariant probability measures $\mu_{\Delta,n}$ and semiconjugacies
$\pi_{\Delta_n}:\Delta_n\to\Lambda_n$.
In particular, $\mu_n=\pi_{\Delta_n*}\mu_{\Delta_n}$.

Define the lifted observables $\phi_n=v_n\circ\pi_{\Delta_n}:\Delta_n\to\R^d$.  By Section~\ref{sec:NUE}, we have the martingale-coboundary decompositions
\[
\phi_n=m_n+\chi_n\circ f_{\Delta_n}-\chi_n.
\]

\begin{prop} \label{prop:unif}
The family $\{|m_n|^2;\,n\ge1\}$ is uniformly integrable
on $(\Lambda_n,\mu_n)$.
\end{prop}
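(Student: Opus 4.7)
The plan is to exploit the explicit pointwise form of $m_n$ on the tower $\Delta_n$. Recall from~\eqref{eq:chi} that $m_n$ vanishes off the top floor of the tower, with $m_n(y,\tau_n(y)-1)=m'_n(y)$, and from the proof of Proposition~\ref{prop:3} that $|m'_n(y)|\ll \tau_n(y)\|v_n\|_\eta$. Under the uniform family hypothesis, the distortion/expansion constants behind that estimate are common to all $n$, and since $\sup_n \|v_n\|_\eta<\infty$, one obtains a pointwise bound
\[
|m_n(y,\ell)|^2 \le C\, \tau_n(y)^2\, 1_{\{\ell = \tau_n(y)-1\}}
\]
with $C$ independent of $n$.

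I would then push the desired tail integral on $\Delta_n$ down to one on $Y_n$ using the product form $\mu_{\Delta_n}=\mu_{Y_n}\times\{\mathrm{counting}\}/\int\tau_n\,d\mu_{Y_n}$ together with the trivial lower bound $\int\tau_n\,d\mu_{Y_n}\ge 1$ (coming from $\tau_n\ge 1$). Since $|m_n|^2\ge K$ forces $\tau_n\ge\sqrt{K/C}$, this reduces the matter to estimating
\[
\int_{\Delta_n} |m_n|^2\, 1_{\{|m_n|^2\ge K\}}\,d\mu_{\Delta_n}
\;\le\; C \int_{Y_n}\tau_n^2\, 1_{\{\tau_n\ge K'\}}\,d\mu_{Y_n},
\]
where $K'=\sqrt{K/C}\to\infty$ as $K\to\infty$, uniformly in $n$.

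It then suffices to show that $\{\tau_n^2\}$ is uniformly integrable on $(Y_n,\mu_{Y_n})$. This is precisely the uniform family hypothesis when $p=2$. When $p>2$, that hypothesis yields a uniform $L^p$ bound on $\tau_n$, whence the elementary estimate $\tau_n^2\, 1_{\{\tau_n\ge K'\}}\le (K')^{2-p}\tau_n^p$ gives what we need.

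The argument is essentially bookkeeping and I do not expect a genuine obstacle. The only point requiring care is tracking that every constant is independent of $n$, which is automatic from the uniform bound on $\|v_n\|_\eta$, the common expansion/distortion data carried by the uniform family assumption, and the crude lower bound $\int\tau_n\,d\mu_{Y_n}\ge 1$.
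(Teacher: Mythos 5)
Your argument is correct, and it is essentially the proof of the cited result \cite[Proposition~4.3]{KKM18} to which the paper defers: the pointwise bound $|m_n|\ll\tau_n\|v_n\|_\eta$ supported on the top level of the tower, reduction to the base via the product structure of $\mu_{\Delta_n}$, and uniform integrability of $\{\tau_n^2\}$ coming from the uniform family hypothesis of order $p\ge2$. No gaps; all constants are indeed uniform in $n$ as you note.
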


\begin{proof}
See~\cite[Proposition~4.3]{KKM18}.
\end{proof}

Abusing notation from Section~\ref{sec:NUE} slightly, we define
\[
S_kv_n=
\sum_{0\le j<k} v_n \circ T_n^j, \qquad
\BBS_kv_n=\sum_{0\le i<j<k} (v_n \circ T_n^i) \otimes (v_n \circ T_n^j).
\]
By uniformity, the constants $C$ in Section~\ref{sec:NUE} can be chosen independently of $n$.  Hence the next result is an immediate consequence of 
Theorem~\ref{thm:moments}:

\begin{cor}[Iterated moments] \label{cor:moments}
  For all \(n \geq 1\), 
  \[
    \big|\max_{k\le n}|S_kv_n|\big|_{L^{2(p-1)}(\mu_n)}\le Cn^{1/2}\|v_n\|_\eta
    , \qquad
    \big|\max_{k\le n}|\BBS_nv_n|\big|_{L^{p-1}(\mu_n)}\le Cn \|v_n\|_\eta^2
    . 
  \]

\vspace{-4ex} \qed
\end{cor}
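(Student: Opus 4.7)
The plan is simply to apply Theorem~\ref{thm:moments} to each $T_n$ individually and to check that the constant produced can be chosen independent of $n$; the statement then follows immediately, as the author alerts us. The only task is the bookkeeping verification that the parameters controlling the constant in Theorem~\ref{thm:moments} are uniformly bounded across the family.

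First I would recall the constant-tracking convention stated at the end of Section~\ref{sec:NUE}: the constant $C$ produced by Theorem~\ref{thm:moments} depends continuously on $\diam\Lambda$, $C_1$, $\bbeta$, $\eta$, $p$ and $|\tau|_p$. Under the hypothesis that $T_n$ forms a uniform family of order $p$, the parameters $C_1$, $\bbeta$, $\eta$ are, by definition, chosen independently of $n$, and $p$ itself is fixed. The diameters $\diam\Lambda_n$ are also uniformly bounded under the uniform family convention of~\cite{KKM18}; in the examples of interest, e.g.\ the intermittent maps~\eqref{eq:LSV}, the ambient space is fixed so this point is automatic.

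Next I would verify uniform control of $|\tau_n|_p$, which is the one genuinely nontrivial item. Uniform integrability of $\{\tau_n^p\}$ gives this at once: choose $q_0$ with $\sup_n \int_{Y_n} \tau_n^p 1_{\{\tau_n \geq q_0\}}\, d\mu_{Y_n} \leq 1$, which forces $\sup_n |\tau_n|_p^p \leq q_0^p + 1$. Applying Theorem~\ref{thm:moments} to $T_n$ and $v_n$ with these now uniformly bounded parameters yields the two estimates exactly as stated, with a single constant $C$ valid for all $n$. There is no real obstacle here beyond tracking continuous dependence of constants---the content of the corollary is precisely that the uniform family hypothesis was chosen so as to make Theorem~\ref{thm:moments} uniformize without further work.
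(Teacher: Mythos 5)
Your proposal is correct and matches the paper's treatment exactly: the paper simply asserts that "by uniformity, the constants $C$ in Section~2 can be chosen independently of $n$" and cites Theorem~\ref{thm:moments}, which is precisely the bookkeeping you carry out. Your verification that uniform integrability of $\{\tau_n^p\}$ bounds $\sup_n|\tau_n|_p$ is the right (and only nontrivial) detail.
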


Write
\begin{align} \label{eq:driftn}
  \Sigma_n & = 
  \lim_{k\to\infty}\frac1k
  \int_{\Delta_n}
S_kv_n
    \otimes
S_kv_n
  \,d\mu_{\Delta_n} 
  , \qquad
  E_n  =
  \lim_{k \to\infty} \frac1k
  \int_{\Delta_n} 
\BBS_kv_n
\,d\mu_{\Delta_n}
  .
\end{align}

\begin{cor} \label{cor:driftn}
The limits in~\eqref{eq:driftn}  exist for each $n$ and are given by
\[
\Sigma_n = \int_{\Delta_n} m_n \otimes m_n \, d\mu_{\Delta_n}, \qquad
E_n = \int_{\Delta_n} \chi_n \otimes \phi_n \, d\mu_{\Delta_n}.
\]
For $p>2$, the convergence is uniform in $n$.  
\end{cor}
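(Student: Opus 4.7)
The plan is to invoke Proposition~\ref{prop:drift} for each $T_n$ individually, and then upgrade the convergence to uniform convergence in $n$ by reading off the explicit parameter-dependence of the constants in that proposition.

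First, for each fixed $n \geq 1$, the map $T_n : \Lambda_n \to \Lambda_n$ is a nonuniformly expanding map of order $p \geq 2$ in the sense of Section~\ref{sec:NUE}, and $v_n \in C^\eta(\Lambda_n, \R^d)$ has mean zero by hypothesis. Applying Proposition~\ref{prop:drift} to the data $(T_n, v_n)$ yields the existence of both limits in~\eqref{eq:driftn} along with the identifications $\Sigma_n = \int_{\Delta_n} m_n \otimes m_n\, d\mu_{\Delta_n}$ and $E_n = \int_{\Delta_n} \chi_n \otimes \phi_n\, d\mu_{\Delta_n}$. This establishes the pointwise-in-$n$ part of the corollary without further work.

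To promote this to uniform convergence when $p > 2$, I would use the quantitative tail bounds already recorded in Proposition~\ref{prop:drift}, namely $|k^{-1} \int_{\Delta_n} S_k v_n \otimes S_k v_n \, d\mu_{\Delta_n} - \Sigma_n| \leq C_n \|v_n\|_\eta^2 \, k^{1/p - 1/2}$ and $|k^{-1} \int_{\Delta_n} \BBS_k v_n \, d\mu_{\Delta_n} - E_n| \leq C_n \|v_n\|_\eta^2 (k^{-1/2} + k^{-(p-2)})$. Per the convention of Section~\ref{sec:NUE}, $C_n$ depends continuously on $\diam \Lambda_n$, the expansion/distortion constants $C_1, \bbeta, \eta$, the exponent $p$, and $|\tau_n|_p$. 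The uniform family hypothesis fixes $C_1, \bbeta, \eta, p$ independently of $n$, and uniform integrability of $\{\tau_n^p\}$ forces $\sup_n |\tau_n|_p < \infty$; combined with the standing assumption $\sup_n \|v_n\|_\eta < \infty$, this bounds the prefactor $C_n \|v_n\|_\eta^2$ uniformly in $n$. Since $p > 2$ makes both exponents $1/p - 1/2$ and $-(p-2)$ strictly negative, the error bounds tend to $0$ as $k \to \infty$ at a rate independent of $n$.

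The only point requiring attention is to confirm that the continuous dependence of the constants in Proposition~\ref{prop:drift} on a bounded set of parameter values does in fact yield a uniform bound across the family; this is essentially the content of the uniform family definition and requires no new estimate. No subtlety arises from the martingale--coboundary decomposition, since the uniform family setup ensures that the decompositions $\phi_n = m_n + \chi_n \circ f_{\Delta_n} - \chi_n$ come with $\|\chi_n\|_\eta$ and the norms appearing in Propositions~\ref{prop:m}--\ref{prop:breve} controlled by the same uniform constants.
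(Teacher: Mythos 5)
Your proposal is correct and matches the paper's argument: the paper's proof of this corollary is simply ``This follows from Proposition~\ref{prop:drift}'', with the uniformity in $n$ of the constants $C$ (and hence of the convergence rates $k^{1/p-1/2}$ and $k^{-1/2}+k^{-(p-2)}$ for $p>2$) already noted just before Corollary~\ref{cor:moments} as a consequence of the uniform family hypothesis. You have merely spelled out the same reasoning in more detail.
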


\begin{proof}
This follows from Proposition~\ref{prop:drift}.
\end{proof}

Define $W_n\in D([0,1],\R^d)$, $\BBW_n\in D([0,1],\R^{d\times d})$ by
\[
  W_n(t) = \frac{1}{\sqrt n} \sum_{0\le j<nt}v_n \circ T_n^j, \qquad
  \BBW_n(t) = \frac1n \sum_{0\leq i<j<nt} (v_n \circ T_n^i) \otimes (v_n \circ T_n^j) .
\]
We can now state the main result of this section.
\begin{thm}[Iterated WIP] \label{thm:WIP}
  Suppose that $\lim_{n \to \infty} \Sigma_n = \Sigma$ and
  $\lim_{n \to \infty}E_n=E$. Then
  \[
(W_n,\BBW_n)\to_{\mu_n}(W,\BBW)\quad\text{as $n \to \infty\,$
  in $\,D([0,1], \R^d \times \R^{d \times d})$,}
\] where 
    $W$ is $d$-dimensional Brownian motion with covariance matrix $\Sigma$ and
    $\BBW(t)=\int_0^t W\otimes dW+Et$.
\end{thm}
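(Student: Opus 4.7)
The strategy is to lift everything to the Young towers $\Delta_n$ via the observables $\phi_n = v_n \circ \pi_{\Delta_n}$ and reduce the theorem to a joint functional CLT for the martingale part of the decomposition $\phi_n = m_n + \chi_n \circ f_{\Delta_n} - \chi_n$. Define
\[
  M_n(t) = n^{-1/2} \sum_{0 \le j < [nt]} m_n \circ f_{\Delta_n}^j, \qquad
  \BBM_n(t) = n^{-1} \sum_{0 \le i < j < [nt]} (m_n \circ f_{\Delta_n}^i) \otimes (m_n \circ f_{\Delta_n}^j).
\]
The plan is to prove the two approximations
\[
  \sup_{t \le 1}|W_n(t) - M_n(t)| = o_p(1), \qquad
  \sup_{t \le 1}|\BBW_n(t) - \BBM_n(t) - E_n t| = o_p(1)
\]
on $(\Delta_n, \mu_{\Delta_n})$, together with the joint weak convergence $(M_n, \BBM_n) \to (W, \int_0^\cdot W \otimes dW)$. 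Since $\mu_n = (\pi_{\Delta_n})_* \mu_{\Delta_n}$, convergence in distribution on $\Delta_n$ transfers to $\Lambda_n$; combined with $E_n \to E$ and the continuous mapping theorem, this yields the theorem.

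The first approximation is immediate from $W_n(t) - M_n(t) = n^{-1/2}(\chi_n \circ f_{\Delta_n}^{[nt]} - \chi_n)$ and Proposition~\ref{prop:m} (applied with constants uniform in $n$), which gives $n^{-1/2}\big|\max_{k \le n}|\chi_n \circ f_{\Delta_n}^k - \chi_n|\big|_p \to 0$. For the second, I use the split $\BBS_n \phi_n = I_n + J_n$ from the proof of Theorem~\ref{thm:moments}. Rewriting
\[
  I_{[nt]} = \sum_{j < [nt]}(\chi_n \otimes \phi_n) \circ f_{\Delta_n}^j - \chi_n \otimes S_{[nt]} \phi_n,
\]
Proposition~\ref{prop:EeeE} (uniformly in $n$ by uniform integrability of $\tau_n^p$) shows the first sum equals $[nt]\, E_n + o_p(n)$ uniformly in $t$, and the second term is $o_p(n)$ using the $L^{p-1}$ bound on $\chi_n$ together with $|\max_{k \le n}|S_k \phi_n||_{2(p-1)} = O(n^{1/2})$ from Corollary~\ref{cor:moments}. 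For $J_n$, substituting $\phi_n = m_n + \chi_n \circ f_{\Delta_n} - \chi_n$ and telescoping the coboundary in $j$ gives
\[
  J_{[nt]} = n\, \BBM_n(t) + S_{[nt]} m_n \otimes \chi_n \circ f_{\Delta_n}^{[nt]} - \sum_{i < [nt]} (m_n \otimes \chi_n \circ f_{\Delta_n}) \circ f_{\Delta_n}^i.
\]
The first error is $o_p(n)$ by Doob applied to the $L^p$-martingale $S_k m_n$, and the second is $o_p(n)$ because $L m_n = 0$ implies $L(m_n \otimes \chi_n \circ f_{\Delta_n}) = \chi_n \otimes L m_n = 0$, so $(m_n \otimes \chi_n \circ f_{\Delta_n}) \circ f_{\Delta_n}^i$ is a reverse martingale difference sequence with summands bounded in $L^2$; its maximal sum is $O_p(n^{1/2})$.

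The heart of the argument is the joint weak convergence $(M_n, \BBM_n) \to (W, \int_0^\cdot W \otimes dW)$. Because $L m_n = 0$, the reversed array $(m_n \circ f_{\Delta_n}^{n-1-j})_{0 \le j < n}$ is a martingale difference sequence with respect to $\cG_{n,j} = f_{\Delta_n}^{-(n-j)} \cM$. Its optional quadratic variation at time $t$ equals $n^{-1} \sum_{k=n-[nt]}^{n-1}(m_n \otimes m_n) \circ f_{\Delta_n}^k$, a Birkhoff sum with mean $([nt]/n)\Sigma_n \to t\Sigma$; uniform integrability of $|m_n|^2$ from Proposition~\ref{prop:unif} together with ergodic averaging shows this bracket converges to $\Sigma t$ in $L^1$ uniformly in $t$, and the same uniform integrability supplies the Lindeberg condition. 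A standard martingale functional CLT (McLeish) then gives $M_n \to_{\mu_{\Delta_n}} W$ in $D([0,1], \R^d)$. Since $M_n$ is c\`adl\`ag piecewise constant with jumps on the grid $\{k/n\}$, a direct calculation shows $\BBM_n(t) = \int_0^t M_n^- \otimes dM_n$, and Kurtz--Protter stability of It\^o integrals against c\`adl\`ag martingales with uniformly controlled brackets upgrades $M_n \to W$ to the required joint convergence.

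The main obstacle is organising the martingale FCLT \emph{uniformly} over the family $T_n$ and transferring it to the iterated integral: one must verify bracket convergence and the Lindeberg condition simultaneously in $n$, using only the uniform control afforded by the uniform family assumption and Proposition~\ref{prop:unif}, and then invoke stochastic-integral stability in a form that tolerates $n$-dependent filtrations. Once this joint convergence on $\Delta_n$ is in place, the decompositions above, together with the hypotheses $\Sigma_n \to \Sigma$, $E_n \to E$, and the pushforward to $\Lambda_n$, complete the proof.
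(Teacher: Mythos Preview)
Your overall architecture --- lift to the tower, use the martingale--coboundary decomposition, invoke Kurtz--Protter for the iterated integral, and control the remaining drift via Proposition~\ref{prop:EeeE} --- matches the paper's.  There is, however, one genuine gap and one place where the paper's route is visibly cleaner.

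\textbf{The genuine gap: bracket convergence.}  You assert that the optional quadratic variation $n^{-1}\sum_{k}(m_n\otimes m_n)\circ f_{\Delta_n}^k$ converges to $t\Sigma$ in $L^1$ ``by uniform integrability of $|m_n|^2$ together with ergodic averaging''.  This does not work as stated.  The ergodic theorem gives convergence of Birkhoff averages for a \emph{fixed} transformation; here both the observable $m_n\otimes m_n$ and the map $f_{\Delta_n}$ vary with $n$, and uniform integrability alone does not supply a weak law uniformly over the family.  Truncating $m_n\otimes m_n$ at level $K$ produces a bounded observable, but it is not H\"older, so none of the uniform moment estimates in Section~\ref{sec:NUE} apply to its Birkhoff sums.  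The paper instead verifies the \emph{predictable} bracket convergence, condition~(c) of Theorem~\ref{thm:mda}, via Proposition~\ref{prop:breve}: that proposition gives a quantitative $L^p$ rate $O(n^{1/2})$ for $\sum_{j<k}\breve\phi_n\circ f_{\Delta_n}^j$ with constants uniform in $n$, which is exactly what is needed.  Once you switch to the conditional variance and cite Proposition~\ref{prop:breve}, this part of your argument goes through.

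\textbf{Comparison of the $\BBM_n$ decompositions.}  You take $\BBM_n$ with $m_n$ in both slots, whereas the paper defines $\BBM_n(t)=n^{-1}\sum_{i<j}(m_n\circ f_{\Delta_n}^i)\otimes(\phi_n\circ f_{\Delta_n}^j)$.  The paper's choice makes $J_{[nt]}=n\,\BBM_n(t)$ exactly, so after the $I_n/J_n$ split only $A_n-B_n-tE_n\to 0$ remains (your $I_n$ argument), and $(Q_n,\BBM_n)\to(W,\int W\otimes dW)$ follows directly from Theorem~\ref{thm:mda} (which already allows $\phi_n$, not just $m_n$, in the second slot of the iterated sum).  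Your further decomposition of $J_n$ produces the two extra error terms $S_{[nt]-1}m_n\otimes\chi_n\circ f_{\Delta_n}^{[nt]}$ and $\sum_{i}(m_n\otimes\chi_n\circ f_{\Delta_n})\circ f_{\Delta_n}^i$.  These \emph{can} be controlled, but your one-line justifications hide real work: for the first, ``Doob'' gives $\max_k|S_km_n|=O_p(n^{1/2})$ but you still need $\chi_n\circ f_{\Delta_n}^{[nt]}$ handled, which requires splitting off $\chi_n\circ f_{\Delta_n}^k-\chi_n$ (controlled in $L^p$ by Proposition~\ref{prop:m}) from the fixed $\chi_n$ (tight in $L^1$, so the product with $n^{-1}\max_k|S_km_n|\to_p 0$ vanishes as in the paper's treatment of $B_n$).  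For the second, ``summands bounded in $L^2$'' is correct but not for the reason you give: one must observe that $m_n\otimes(\chi_n\circ f_{\Delta_n})$ is supported on the top level of the tower and equals $m'_n\otimes(\chi'_n\circ F_n)$ there, hence is pointwise $\ll\tau_n$ and so lies in $L^p$.  None of this is wrong, but the paper's mixed $\BBM_n$ sidesteps all of it.
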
	

To prove Theorem~\ref{thm:WIP}, it is equivalent to show that 
$(Q_n,\BBQ_n)\to_{\mu_{\Delta_n}}(W,\BBW)$ where
\[
  Q_n(t) = \frac{1}{\sqrt n} \sum_{0\le j<nt}\phi_n \circ f_{\Delta_n}^j, \qquad
  \BBQ_n(t) = \frac1n \sum_{0\leq i<j<nt} (\phi_n \circ f_{\Delta_n}^i) \otimes (\phi_n \circ f_{\Delta_n}^j) .
\]
Define also
  $\BBM_n(t)
   = \frac1n \sum_{0\leq i<j<nt} (m_n \circ f_{\Delta_n}^i) \otimes (\phi_n \circ f_{\Delta_n}^j)
$
  .

\begin{lemma} \label{lem:KP}
  Suppose that $\lim_{n\to\infty}\Sigma_n=\Sigma$.
  Then $(Q_n,\BBM_n)\to_{\mu_{\Delta_n}}(W,\BBM)$
  in \(D([0,1], \R^d \times \R^{d \times d})\),
  where $\BBM(t)=\int_{0}^{t}W\otimes dW$. 
\end{lemma}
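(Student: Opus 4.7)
The plan is to reduce to a pure martingale statement via the decomposition $\phi_n=m_n+\chi_n\circ f_{\Delta_n}-\chi_n$. Set $\hat M_n(t)=n^{-1/2}\sum_{j<nt}m_n\circ f_{\Delta_n}^j$. The difference $Q_n(t)-\hat M_n(t)=n^{-1/2}(\chi_n\circ f_{\Delta_n}^{\lfloor nt\rfloor}-\chi_n)$ telescopes, and a refined use of Proposition~\ref{prop:m} (taking $q>p$ and exploiting uniform integrability of $\{\tau_n^p\}$) shows it vanishes in $L^p$ uniformly in $t$. Rewriting
\[
\BBM_n(t)=\frac1{\sqrt n}\sum_{j<nt}\hat M_n(j/n)\otimes(\phi_n\circ f_{\Delta_n}^j)
\]
and substituting the martingale-coboundary decomposition splits $\BBM_n=\tilde{\BBM}_n+R_n$, where $\tilde{\BBM}_n(t)=\int_0^t\hat M_n(s^-)\otimes d\hat M_n(s)$ is a discrete It\^o integral and $R_n$ collects the coboundary errors.

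Abel summation rewrites $R_n(t)$ as a boundary contribution $n^{-1/2}\hat M_n(\tfrac{\lfloor nt\rfloor-1}{n})\otimes\chi_n\circ f_{\Delta_n}^{\lfloor nt\rfloor}$ minus a martingale-transform sum $n^{-1}\sum_{1\le j<nt}(m_n\circ f_{\Delta_n}^{j-1})\otimes(\chi_n\circ f_{\Delta_n}^j)$. The boundary term vanishes uniformly in probability, since $\hat M_n$ is tight by Doob's inequality and $\max_{k\le n}|\chi_n\circ f_{\Delta_n}^k|/\sqrt n\to 0$ in $L^{p-1}$ by the same refinement of Proposition~\ref{prop:m}. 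For the sum, since $\E[m_n\circ f_{\Delta_n}^{j-1}\mid\cG_j]=0$ it is a (reverse) martingale transform; mimicking the proof of Proposition~\ref{prop:EeeE} I would split $\chi_n=\chi_n 1_{\{\tau_n<q\}}+\chi_n 1_{\{\tau_n\ge q\}}$. Burkholder applied to the bounded part, using $|L|m_n|^2|_\infty\ll\|v_n\|_\eta^2$ (the argument of Proposition~\ref{prop:3} with exponent $2$), gives an $L^2$ bound of order $q/\sqrt n$; the tail part is estimated in $L^1$ by $C\sup_n\int_{Y_n}\tau_n^2 1_{\{\tau_n\ge q\}}\,d\mu_{Y_n}$, which tends to $0$ as $q\to\infty$. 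Choosing $q=n^{1/4}$ kills both contributions.

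It remains to establish $(\hat M_n,\tilde{\BBM}_n)\to(W,\BBM)$ jointly in $D([0,1],\R^d\times\R^{d\times d})$. The marginal $\hat M_n\to W$ is the martingale FCLT applied to the reverse-martingale differences $m_n\circ f_{\Delta_n}^j$: the conditional second moments $n^{-1}\sum_{j<nt}\E[(m_n\circ f_{\Delta_n}^j)^{\otimes 2}\mid\cG_{j+1}]$ converge in probability to $\Sigma t$ (using $\Sigma_n\to\Sigma$ together with an ergodic-averaging argument), while the conditional Lindeberg condition is precisely the uniform integrability of $\{|m_n|^2\}$ supplied by Proposition~\ref{prop:unif}. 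Joint convergence with the iterated integral $\tilde{\BBM}_n=\int\hat M_n(s^-)\otimes d\hat M_n(s)$ then follows from standard Kurtz--Protter-type continuity results for stochastic integrals driven by $L^2$-bounded martingales.

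The main obstacle is this last step: while $\hat M_n\to W$ is classical, continuity of stochastic integration in the integrator is delicate and genuinely relies on the martingale structure and uniform $L^2$ control, both of which we have at hand. A secondary technical point is the truncation in the $R_n$ estimate, made necessary by the fact that for $p\in[2,3)$ one does not have $|\chi_n|_2<\infty$, so a naive Burkholder estimate would fail.
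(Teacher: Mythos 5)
Your overall strategy --- martingale--coboundary reduction followed by a Kurtz--Protter-type limit theorem --- is the same as the paper's, which proves the lemma by verifying hypotheses (a)--(c) of Theorem~\ref{thm:mda}. Your treatment of the coboundary remainder $R_n$ (truncation at $\tau_n<q$, Burkholder via $\big|L|m_n|^2\big|_\infty\ll\|v_n\|_\eta^2$ for the bounded part, an $L^1$ estimate plus uniform integrability of $\{\tau_n^2\}$ for the tail, $q=n^{1/4}$) is sound and closely parallels Proposition~\ref{prop:EeeE}. Note, however, that this entire reduction is avoidable: Theorem~\ref{thm:mda} is deliberately formulated with $\phi_n$ rather than $m_n$ in the second slot of $\BBM_n$, because after time reversal $\phi_n$ plays the role of the \emph{integrand} in the Kurtz--Protter framework, and the integrand need only converge u.c.p.\ (via hypothesis~(b)); it need not be a martingale. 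So no second application of the coboundary decomposition is required.

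The genuine gap is in your final step. The increments $m_n\circ f_{\Delta_n}^j$ are martingale differences with respect to a \emph{decreasing} filtration (you correctly condition on $\cG_{j+1}$), so the process you call $\int_0^t \hat M_n(s^-)\otimes d\hat M_n(s)$ is not a stochastic integral against a forward martingale, and the Kurtz--Protter continuity theorem does not apply to it directly: the integrator's increments are centered given the \emph{future}, not the past, and $\hat M_n(s^-)$ is not adapted to any forward filtration that makes $\hat M_n$ a martingale. The paper's Appendix~\ref{sec:mda} exists precisely to handle this: one reverses time (setting $Y_{n,j}=n^{-1/2}m_n\circ f_{\Delta_n}^{n-j}$, a genuine forward martingale difference array), applies \cite{KurtzProtter91} to the reversed triple, and then transports the limit back via the map $g$, invoking \cite[Lemma~4.11]{KM16} to identify the time-reversed limit $\{\BBM(1)-\BBM(1-t)-W(1-t)\otimes(W(1)-W(1-t))\}^*$ with $\int_0^t W\otimes dW$ in distribution. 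This identity (note the transpose) is the crux of the lemma and is not a ``standard continuity result''; as written, your argument stalls exactly here. A secondary omission: for a \emph{family} of maps, the conditional-variance condition cannot be obtained from ``an ergodic-averaging argument'' applied to each fixed $n$ --- one needs the quantitative, $n$-uniform bound of Proposition~\ref{prop:breve} combined with $\Sigma_n\to\Sigma$, which is how the paper verifies hypothesis~(c) of Theorem~\ref{thm:mda}.
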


\begin{proof}  
We verify the hypotheses of Theorem~\ref{thm:mda}.
Hypothesis~(a) holds by Proposition~\ref{prop:unif}.
Next, by Proposition~\ref{prop:m}, writing $|\;|_2$ as shorthand for
$|\;|_{L^2(\mu_{\Delta_n})}$ and
$|\;|_{L^2(\mu_{Y_n})}$,
  \begin{align*}
\Big|\max_{k\le n}\big|\sum_{0\le j<k}(\phi_n-m_n)\circ f_{\Delta_n}^j\big|\Big|_2
& =
\Big|\max_{k\le n}|\chi_n\circ f_{\Delta_n}^k-\chi_n|\Big|_2
 \\ & 
\le C\|v_n\|_\eta(n^{1/4}+n^{1/2}|1_{\{\tau_n\ge n^{1/4}\}}\tau_n|_2).
\end{align*}
Since the family $\{\tau_n^2\}$ is uniformly integrable,
$n^{-1/2}\big|\max_{k\le n}|\sum_{0\le j<k}(\phi_n-m_n)\circ f_{\Delta_n}^j|\big|_2\to0$,
 verifying hypothesis~(b).

Finally, by Proposition~\ref{prop:breve}, for $t\in[0,1]$,
\[
\Big|\sum_{0\le j<nt}U_nL_n(m_n\otimes m_n)\circ f_{\Delta_n}^j-[nt]\Sigma_n\Big|_2\le
Cn^{1/2}\|v_n\|_\eta^2.
\]
Hypothesis~(c) follows.
\end{proof}

\begin{pfof}{Theorem~\ref{thm:WIP}}
  Write
\(
    \BBQ_n(t)-\BBM_n(t) = A_n(t) - B_n(t) 
    ,
  \)
  where 
  \begin{align*}
    A_n(t) & = \frac1n \sum_{0\le j<nt}(\chi_n\otimes\phi_n)\circ f_{\Delta_n}^j,
   \qquad B_n(t)  = \frac1n \chi_n \otimes \sum_{0\le j<nt}\phi_n\circ f_{\Delta_n}^j.
  \end{align*}
By Lemma~\ref{lem:KP}, it suffices to show that
\(
\sup_{t\in[0,1]}|A_n(t)-B_n(t)-tE_n|\to_{\mu_{\Delta_n}}0.
\)

Write $|\;|_q=|\;|_{L^q(\mu_{\Delta_n})}$.  
Since the  family
$\{\tau_n^2\}$ is uniformly integrable, it follows from
 Proposition \ref{prop:EeeE} that 
      \begin{align*}
        \big| \sup_{t \in [0,1]}|A_n(t)-t E_n|\big|_1
        & \ll \|v_n\|_\eta^2 \Big(
          n^{-1/4} + \int_{Y_n} \tau_n^2 1_{\{\tau_n \geq n^{1/4}\}} \, d\mu_{Y_n}
        \Big)
\to0
        .
      \end{align*}

Next,
      \(
        \sup_{t\in[0,1]} |B_n(t)|
        \leq |\chi_n| B'_n
      \)
where
      \(
        B_n' 
        = n^{-1} \max_{k \leq n}\big|\sum_{j=0}^{k-1}\phi_n\circ f_{\Delta_n}^j\big|
        .
      \)
      By Theorem~\ref{thm:moments},
      \({|B_n'|}_2 \ll n^{-1/2} \|v_n\|_\eta\ll n^{-1/2}\),
      so \(B_n' \to_{\mu_{\Delta_n}} 0\).
      Also, by Proposition~\ref{prop:m}, \({|\chi_n|}_1 \ll \|v_n\|_\eta=O(1)\). It follows that
      \(\sup_{t\in[0,1]} |B_n(t)| \to_{\mu_{\Delta_n}} 0\).
\end{pfof}

\begin{cor} \label{cor:WIP}
  Suppose that $\lim_{n \to \infty} \Sigma_n = \Sigma$ and
  $\lim_{n \to \infty}E_n=E$. 
Let $\lambda_n$ be a family of probability measures on $\Lambda_n$ absolutely continuous with respect to $\mu_n$. Suppose that the  densities 
$\rho_n=d\lambda_n/d\mu_n$ satisfy 
$\sup_n\int\rho_n^{1+\delta}\,d\mu_n<\infty$ for some $\delta>0$ and that
    $\BIG\inf_{N\ge1} \limsup_{n \to\infty}
      \int\Big|
      \frac1N \sum_{j=0}^{N-1} \rho_n \circ T_n^j 
      \,-\, 1
      \Big| \, d\mu_n = 0$.

Then
  $(W_n,\BBW_n)\to_{\lambda_n}(W,\BBW)$ as $n \to \infty$
  in \(D([0,1], \R^d \times \R^{d \times d})\)
 where 
    $W$ is $d$-dimensional Brownian motion with covariance matrix $\Sigma$ and
    $\BBW(t)=\int_0^t W\otimes dW+Et$.
\end{cor}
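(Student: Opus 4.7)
The plan is to reduce to Theorem~\ref{thm:WIP} by a Zweim\"uller-type change-of-measure argument, combining an asymptotic $T_n$-invariance of the processes $(W_n,\BBW_n)$ with Ces\`aro averaging of the densities $\rho_n$. The uniform $L^{1+\delta}$ bound on $\rho_n$ will allow a H\"older exchange, and the second hypothesis will then close the estimate.

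First I would establish asymptotic invariance: for each fixed $N\ge1$, $(W_n,\BBW_n)\circ T_n^N - (W_n,\BBW_n) \to 0$ in $L^2(\mu_n)$. For $W_n$ this is immediate from $S_k\circ T_n^N = S_{k+N}-S_N$, which gives $\sup_t|W_n(t)\circ T_n^N - W_n(t)|\le 2N|v_n|_\infty/\sqrt n$. For $\BBW_n$ I would use the telescoping identity
\[
\BBS_k\circ T_n^N - \BBS_k
 = \sum_{k\le j<k+N} S_j\otimes(v_n\circ T_n^j) - \sum_{i=0}^{N-1}(v_n\circ T_n^i)\otimes(S_{k+N}-S_{i+1}),
\]
which expresses the difference as $O(N)$ terms involving partial sums up to time $n+N$; the $L^{2(p-1)}$ bound on $\max_k|S_k|$ from Corollary~\ref{cor:moments} (with $2(p-1)\ge 2$) gives $L^2$-norm $O(N\sqrt n)$, so dividing by $n$ yields $O(N/\sqrt n)\to 0$.

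Next, for bounded continuous $G$ on $D([0,1],\R^d\times\R^{d\times d})$ with $\|G\|_\infty\le1$, let $\Phi_n=G(W_n,\BBW_n)$ and $\rho_{n,N}=N^{-1}\sum_{j=0}^{N-1}\rho_n\circ T_n^j$. By $T_n$-invariance of $\mu_n$,
\[
\int\Phi_n\,d\lambda_n
 = \int\Phi_n\,\rho_n\,d\mu_n
 = \frac1N\sum_{j=0}^{N-1}\int(\Phi_n\circ T_n^j)(\rho_n\circ T_n^j)\,d\mu_n,
\]
and H\"older with conjugate exponents $1+1/\delta,\,1+\delta$ combined with the uniform $L^{1+\delta}(\mu_n)$ bound on $\rho_n$ gives
\[
\Big|\int\Phi_n\,d\lambda_n - \int\Phi_n\,\rho_{n,N}\,d\mu_n\Big|
 \ll \max_{0\le j<N}|\Phi_n\circ T_n^j - \Phi_n|_{L^{1+1/\delta}(\mu_n)}.
\]
By the first step, the tightness of $(W_n,\BBW_n)$ under $\mu_n$ (from Theorem~\ref{thm:WIP}), and continuity of $G$, we obtain $\Phi_n\circ T_n^j-\Phi_n\to 0$ in $\mu_n$-probability for each fixed $j<N$; since $|\Phi_n|\le 1$, dominated convergence upgrades this to $L^q(\mu_n)$-convergence for every $q<\infty$, so the right-hand side above vanishes as $n\to\infty$ for fixed $N$. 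Writing
\[
\int\Phi_n\,d\lambda_n - \int\Phi_n\,d\mu_n
 = \Big(\int\Phi_n\,d\lambda_n - \int\Phi_n\,\rho_{n,N}\,d\mu_n\Big)
 + \int\Phi_n(\rho_{n,N}-1)\,d\mu_n,
\]
the first bracket vanishes as $n\to\infty$ and the second is bounded by $\int|\rho_{n,N}-1|\,d\mu_n$. Taking $\limsup$ in $n$ and then $\inf$ over $N$, the averaging hypothesis forces both contributions to $0$; combining with Theorem~\ref{thm:WIP} yields $\int\Phi_n\,d\lambda_n\to\E[G(W,\BBW)]$, which is the desired weak convergence under $\lambda_n$.

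The main technical obstacle will be the passage from $L^2$-path convergence in the first step to $\mu_n$-probability convergence of $\Phi_n\circ T_n^j - \Phi_n$: since $G$ is continuous on $D$ only in the Skorohod topology and need not be uniformly continuous, one must first invoke tightness of $(W_n,\BBW_n)$ to confine the paths to a compact set on which $G$ is uniformly continuous, and absorb the small exceptional mass using boundedness of $\Phi_n$.
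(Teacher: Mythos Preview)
Your approach is correct and matches the paper's: it too applies a Zweim\"uller-type strong distributional convergence argument (packaged there as Remark~\ref{rmk:SDC}), verifying asymptotic $T_n$-invariance of $(W_n,\BBW_n)$ via Corollary~\ref{cor:moments} and then combining Ces\`aro averaging of $\rho_n$ with the $L^{1+\delta}$ bound. The one simplification in the paper is to test against functions $\psi$ that are Lipschitz in the sup metric, so that $|\psi\circ R_n\circ T_n-\psi\circ R_n|\le(\Lip\,\psi)\,\sup_t|R_n\circ T_n(t)-R_n(t)|$ directly, bypassing the tightness/uniform-continuity obstacle you flag in your last paragraph.
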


\begin{proof}
We verify the conditions of Remark~\ref{rmk:SDC}
with $\cB=D([0,1], \R^d \times \R^{d \times d})$ and
$d_\cB(u,v)=\sup_{t\in[0,1]}|u(t)-v(t)|$.
The result then follows from Theorem~\ref{thm:WIP}.

Conditions~(S1) and~(S5) of Remark~\ref{rmk:SDC} hold by assumption so it remains to verify (S4).
Define the sequence of random variables 
\[
R_n:\Lambda\to D([0,1],\R^d\times\R^{d\times d}), \qquad
R_n=(W_n,\BBW_n).
\]
We have $\sup_{t\in[0,1]}|(W_n\circ T_n)(t)-W_n(t)|\le 2n^{-1/2}|v_n|_\infty$.
Also,
\[
(\BBW_n\circ T_n)(t)-\BBW_n(t)
=n^{-1}\sum_{1\le i<nt}(v_n\circ T_n^i)\otimes (v_n\circ T_n^n)
-n^{-1}\sum_{1\le j<nt}v_n\otimes (v_n\circ T_n^j).
\]
Write $|\;|_1=|\;|_{L^1(\mu_n)}$.
By Corollary~\ref{cor:moments},
\begin{align*}
\big|\sup_{[0,1]}|\BBW_n\circ T_n-\BBW_n|\big|_1
& \le 4n^{-1}|v_n|_\infty \big|\max_{k\le n}|S_kv_n|\big|_1
 \ll n^{-1/2}\|v_n\|_\eta^2.
\end{align*}
Hence
\[
 |d_\cB( R_n \circ T_n,  R_n)|_1
\ll n^{-1/2}(|v_n|_\infty+\|v_n\|_\eta^2),
\]
verifying condition~(S4).
\end{proof}

\begin{rmk}
By Corollary~\ref{cor:moments},
$|N^{-1}\sum_{j=0}^{k-1}\rho_n\circ T_n^j-1|
\ll N^{-1/2}\|\rho_n\|_\eta$.
Hence a sufficient condition for the assumptions on $\rho_n$ in
Corollary~\ref{cor:WIP} is that
$\sup_n \|\rho_n\|_\eta<\infty$.  
\end{rmk}

\subsection{Existence of limits for \texorpdfstring{$\Sigma_n$ and $E_n$}{Sigma\_n and E\_n}}
\label{sec:En}

Theorem~\ref{thm:WIP} and Corollary~\ref{cor:WIP}
establish the iterated WIP subject to the existence of 
$\lim_{n\to\infty}\Sigma_n$ and
$\lim_{n\to\infty}E_n$.
In this subsection, we describe mild conditions under which these limits exist.

Let $(\Lambda,d_\Lambda)$ be a bounded metric space with finite Borel measure $\rho$.
We assume that $T_n$, $n\in\N\cup\{\infty\}$, is a uniform family as in Section~\ref{sec:iterated} but now of order $p>2$ and defined on the common space $\Lambda$.
In particular, each $T_n$ is a nonuniformly
expanding map as in Section~\ref{sec:NUE}, with absolutely continuous
ergodic $T_n$-invariant Borel probability measures $\mu_n$.
We suppose that $\mu_n$ is {\em statistically stable}:
$\mu_n\to_w\mu_\infty$ as $n\to\infty$.
Moreover, we require that 
\begin{align}
\label{eq:tech}
& \SMALL \int_\Lambda (v\circ T_\infty^j)(w\circ T_\infty^k)\,(d\mu_n-d\mu_\infty)\to 0
\qquad\text{and} \qquad
 T_n^j\to_{\mu_n}T_\infty^j \qquad\text{as $n\to\infty$}
\end{align}
for all $j,k\ge0$ and all $v,w:\Lambda\to\R$ H\"older.
(The second part of condition~\eqref{eq:tech} means that
$\mu_n\{y\in \Lambda:d_\Lambda(T_n^jy,T_\infty^jy)>a\}\to0$ for all $a>0$.)

Let $v_n\in C^\eta(\Lambda,\R^d)$, $n\in\N\cup\{\infty\}$, with $\int_\Lambda v_n\,d\mu_n=0$.
We assume that $\lim_{n\to\infty}\|v_n-v_\infty\|_\eta=0$.  

\begin{lemma} \label{lem:En}
Define
\(
S_nv_n = \sum_{0\le j<n}v_n\circ T_n^j
\) and
\(
\BBS_nv_n = \sum_{0\le i<j<n}
(v_n\circ T_n^i)
\otimes (v_n\circ T_n^j).
\)
Then the limits 
\begin{align*}  
\Sigma_n
 =\lim_{n\to\infty}\frac1n\int_\Lambda S_nv_n\otimes S_nv_n\,d\mu_n, \qquad
E_n = \lim_{n\to\infty}\frac1n \int_\Lambda \BBS_nv_n\,d\mu_n.
\end{align*}
exist for all $n\in\N\cup\{\infty\}$, and
$\lim_{n\to\infty}\Sigma_n=\Sigma_\infty$,
$\lim_{n\to\infty}E_n=E_\infty$.
\end{lemma}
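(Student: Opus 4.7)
The plan has two ingredients: (a) existence of each $\Sigma_n, E_n$ with convergence rates that are uniform in $n$, and (b) convergence of the finite-time averages at any fixed horizon as $n\to\infty$. Ingredient~(a) is immediate from Proposition~\ref{prop:drift} applied to $T_n$: since $T_n$ is a uniform family of order $p>2$ and $\sup_n\|v_n\|_\eta<\infty$ (which follows from $\|v_n-v_\infty\|_\eta\to0$), the constant $C$ in that proposition can be taken independent of $n$, yielding
\[
\Big|\tfrac1N\int_\Lambda S_Nv_n\otimes S_Nv_n\,d\mu_n-\Sigma_n\Big|\le C\|v_n\|_\eta^2\, N^{1/p-1/2}
\]
uniformly in $n\in\N\cup\{\infty\}$, and analogously for $E_n$ with rate $C\|v_n\|_\eta^2(N^{-1/2}+N^{-(p-2)})$. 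Thus for any $\epsilon>0$ the finite-time error can be made below $\epsilon$ uniformly in $n$ by choosing $N$ large.

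With this in hand, it suffices to prove, for each fixed $N$,
\[
\tfrac1N\int_\Lambda S_Nv_n\otimes S_Nv_n\,d\mu_n\;\to\;\tfrac1N\int_\Lambda S_Nv_\infty\otimes S_Nv_\infty\,d\mu_\infty
\qquad\text{as } n\to\infty,
\]
and the analogous statement for $\BBS_Nv_n$. Expanding the sums reduces this to showing, for each fixed $i,j\ge0$,
\[
\int_\Lambda (v_n\circ T_n^i)\otimes(v_n\circ T_n^j)\,d\mu_n
\;\to\;
\int_\Lambda (v_\infty\circ T_\infty^i)\otimes(v_\infty\circ T_\infty^j)\,d\mu_\infty.
\]
I would obtain this by three replacements. (i)~Replace $v_n$ by $v_\infty$; the error is $O(\|v_n-v_\infty\|_\infty)\to0$ using $\|v_n-v_\infty\|_\eta\to0$ and the uniform bound on $|v_n|_\infty$. (ii)~Replace each $T_n^k$ by $T_\infty^k$ inside $v_\infty$: $v_\infty$ is H\"older, hence uniformly continuous, so $T_n^k\to_{\mu_n}T_\infty^k$ implies $v_\infty\circ T_n^k\to v_\infty\circ T_\infty^k$ in $\mu_n$-probability, and the $L^\infty$ bound on $v_\infty$ upgrades this to $L^1(\mu_n)$-convergence by bounded convergence. (iii)~Replace $d\mu_n$ by $d\mu_\infty$, which is exactly the first part of hypothesis~\eqref{eq:tech} applied coordinatewise to the $\R^{d\times d}$-valued integrand.

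The main obstacle is step~(iii). Although we are given $\mu_n\to_w\mu_\infty$, the integrand $(v_\infty\circ T_\infty^i)\otimes(v_\infty\circ T_\infty^j)$ is typically not continuous on $\Lambda$ (discontinuities are introduced when iterating $T_\infty$), so weak convergence alone is insufficient; hypothesis~\eqref{eq:tech} is engineered precisely to bridge this gap for products of the form $(v\circ T_\infty^j)(w\circ T_\infty^k)$ with $v,w$ H\"older. Once the three replacements are carried out, summing the $N^2$ (resp.\ $\binom{N}{2}$) resulting terms yields the fixed-$N$ convergence used in the second paragraph, and combining with the uniform-in-$n$ finite-time approximation from the first paragraph gives $\Sigma_n\to\Sigma_\infty$ and $E_n\to E_\infty$.
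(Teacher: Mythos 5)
Your proposal is correct and follows essentially the same route as the paper: a uniform-in-$n$ finite-time approximation of $\Sigma_n,E_n$ (the paper's Corollary~\ref{cor:driftn}, i.e.\ Proposition~\ref{prop:drift} with uniform constants), followed by convergence at each fixed horizon $N$ via the three replacements $v_n\rightsquigarrow v_\infty$, $T_n\rightsquigarrow T_\infty$, $\mu_n\rightsquigarrow\mu_\infty$, the last two using exactly hypothesis~\eqref{eq:tech}. The only cosmetic difference is that the paper delegates the $\Sigma_n$ case to \cite[Proposition~7.6]{KKM18} and writes out only the $E_n$ case, bundling your steps (i)--(ii) into the terms $A_1+A_2$.
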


\begin{proof}
The limits $\Sigma_n$ and $E_n$ exist for $n$ fixed by Corollary~\ref{cor:driftn}.
We refer to~\cite[Proposition~7.6]{KKM18} for the proof that 
$\lim_{n\to\infty}\Sigma_n=\Sigma_\infty$.
Here we show that $\lim_{n\to\infty}E_n=E_\infty$.

  Write $J_{n,n}=\int_\Lambda \BBS_nv_n\, d\mu_n$.
Let $\delta>0$.
  By Corollary~\ref{cor:driftn}, there exists $N\ge1$ such that
      $|N^{-1}J_{n,N}-E_n |<\delta$ for all $n\ge1$.
Hence,
\begin{equation} \label{eq:maybe}
|E_n-E_\infty|<2\delta+N^{-1}|J_{n,N}-J_{0,N}|.
\end{equation}
Next
  \[ 
    J_{n,N}-J_{0,N}
     = \int_\Lambda ( \BBS_Nv_n - \BBS_Nv_\infty) \, d \mu_n
    + \int_\Lambda \BBS_Nv_\infty \, (d\mu_n - d\mu_\infty).
  \]
  By condition~\eqref{eq:tech},
  \(\lim_{n\to\infty} \int_\Lambda \BBS_Nv_\infty \, (d\mu_n - d\mu_\infty) = 0\).
Also,
\[
|\BBS_Nv_n-\BBS_Nv_\infty|  \le
\sum_{0\le i<j<N}|(v_n\circ T_n^i)\otimes(v_n\circ T_n^j)-
(v_\infty\circ T_\infty^i)\otimes(v_\infty\circ T_\infty^j)| \le A_1+A_2
\]
where
\begin{align*}
A_1 & =
\sum_{0\le i<j<N}|(v_n\circ T_n^i)\otimes(v_n\circ T_n^j)-
(v_\infty\circ T_n^i)\otimes(v_\infty\circ T_n^j)|, \\
A_2 & = \sum_{0\le i<j<N}|(v_\infty\circ T_n^i)\otimes(v_\infty\circ T_n^j)-
(v_\infty\circ T_\infty^i)\otimes(v_\infty\circ T_\infty^j)|.
\end{align*}
Now,
\begin{align*}
A_1 & \le \sum_{0\le i<j<N} (|v_n|\circ T_n^i
|v_n-v_\infty|\circ T_n^j
+|v_n-v_\infty|\circ T_n^i |v_\infty|\circ T_n^j)
\\ & \le N^2(|v_n|_\infty+|v_\infty|_\infty)|v_n-v_\infty|_\infty.
\end{align*}
Also,
\[
A_2  \le N|v_\infty|_\infty\sum_{0\le j<N} 
|v_\infty\circ T_n^j-v_\infty\circ T_\infty^j|
\le N|v_\infty|_\infty|v_\infty|_\eta g_{n,N}
\]
where $g_{n,N}(y)=\sum_{j=0}^{N-1}d_\Lambda(T_n^jy,T_\infty^jy)^\eta$.
By~the assumption on $v_n$ and condition~\eqref{eq:tech}, we obtain that
$\lim_{n\to\infty}| \BBS_Nv_n - \BBS_Nv_\infty|_{L^1(\mu_n)}=0$.
Hence $\lim_{n\to\infty}J_{n,N}=J_{0,N}$ and so
$\limsup_{n\to\infty}|E_n-E_\infty|\le 2\delta$ by~\eqref{eq:maybe}.
  Since $\delta$  is arbitrary, the result follows.
\end{proof}

\subsection{Auxiliary properties}

Our results so far in this section on the iterated WIP and control of iterated moments verify the main hypotheses required to apply rough path theory in Part 2.
However, there remain two relatively minor hypotheses, Assumption~2.11 and Assumption~2.12(ii)(a) in~\cite{CFKMZsub} which we address now.
We continue to assume the set up of Subsection~\ref{sec:En} though we require weaker regularity assumptions on $v_n$: 
it suffices that $v_n\in L^\infty(\Lambda,\R^d)$, $n\ge1$, 
and $v_\infty\in C^\eta(\Lambda,\R^d)$ 
with $\lim_{n\to\infty}|v_n-v_\infty|_\infty=0$.
Fix $t\in[0,1]$, and
define $V_n=n^{-1}\sum_{j=0}^{[nt]-1}v_n\circ T_n^j$.

\begin{prop} \label{prop:aux}
\begin{itemize}
\item[(a)]
$\lim_{n\to\infty}\big|V_n -t\int_\Lambda v_\infty\,d\mu_\infty\big|_{L^p(\mu_n)}=0$.
\item[(b)] $\lim_{n\to\infty}\int_\Lambda v_n\otimes v_n\,d\mu_n
= \int_\Lambda v_\infty\otimes v_\infty\,d\mu_\infty$.
\end{itemize}
\end{prop}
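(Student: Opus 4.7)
The plan is to dispose of (b) first since it follows directly from the two convergences in hand. Write
\[
\int v_n\otimes v_n\,d\mu_n - \int v_\infty\otimes v_\infty\,d\mu_\infty = \int(v_n\otimes v_n - v_\infty\otimes v_\infty)\,d\mu_n + \int v_\infty\otimes v_\infty\,(d\mu_n - d\mu_\infty);
\]
the first term is bounded by $(|v_n|_\infty + |v_\infty|_\infty)|v_n - v_\infty|_\infty\to 0$, and the second vanishes by condition~\eqref{eq:tech} with $j=k=0$ applied coordinatewise (equivalently, by statistical stability against the bounded continuous function $v_\infty\otimes v_\infty$).

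For (a) the strategy is to (i) replace $v_n$ by the Hölder limit $v_\infty$ up to a uniformly small error, (ii) split $v_\infty$ into its $\mu_n$-mean plus a mean-zero Hölder fluctuation, and (iii) apply Corollary~\ref{cor:moments} to that fluctuation. For (i), the difference $V_n - n^{-1}\sum_{j<[nt]} v_\infty\circ T_n^j$ is bounded in $L^\infty$ by $|v_n - v_\infty|_\infty$, which tends to zero. For (ii), define $u_n = v_\infty - \int_\Lambda v_\infty\,d\mu_n \in C^\eta(\Lambda,\R^d)$; it has $\mu_n$-mean zero and uniformly bounded Hölder norm $\|u_n\|_\eta\le 2\|v_\infty\|_\eta$. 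For (iii), Corollary~\ref{cor:moments} applied to the uniform family $T_n$ with the observables $u_n$ yields
\[
\big|n^{-1}\SMALL\sum_{j<[nt]} u_n\circ T_n^j\big|_{L^{2(p-1)}(\mu_n)} = O(n^{-1/2});
\]
since $p>2$ forces $2(p-1)>p$, this dominates the $L^p(\mu_n)$ norm. The remaining deterministic piece $\tfrac{[nt]}{n}\int_\Lambda v_\infty\,d\mu_n$ converges to $t\int_\Lambda v_\infty\,d\mu_\infty$ by statistical stability applied to the bounded continuous function $v_\infty$, which together with (i) and (iii) gives (a).

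The main technical point is the recentring in step (ii): one must subtract the $\mu_n$-mean (not the $\mu_\infty$-mean) of $v_\infty$ so that Corollary~\ref{cor:moments} applies directly to a uniform family of Hölder observables with mean zero w.r.t.\ the invariant measure. The discrepancy between the two means is then absorbed into the deterministic limit produced by statistical stability in step (iii). Beyond this bookkeeping there is no substantial obstacle, and no tool heavier than Corollary~\ref{cor:moments} and $\mu_n\to_w\mu_\infty$ is needed.
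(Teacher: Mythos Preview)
Your argument is correct and is essentially identical to the paper's proof: the paper also replaces $v_n$ by $v_\infty$ via $|v_n-v_\infty|_\infty\to0$, applies Corollary~\ref{cor:moments} to the centred observable $v_\infty-\int v_\infty\,d\mu_n$, and finishes with $\int v_\infty\,d\mu_n\to\int v_\infty\,d\mu_\infty$ from~\eqref{eq:tech}; part (b) is handled by the same two-term decomposition you give. The only cosmetic differences are that the paper treats (a) before (b) and leaves the recentring implicit when invoking Corollary~\ref{cor:moments}.
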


\begin{proof}
(a)
Define $U_n=n^{-1}\sum_{j=0}^{[nt]-1}v_\infty \circ T_n^j$.
Then $|V_n-U_n|_\infty\le t|v_n-v_\infty|_\infty\to0$.
Since $v_\infty$ is H\"older, it follows from Corollary~\ref{cor:moments}
that  $|U_n-t\int_\Lambda v_\infty \,d\mu_n|_{L^p(\mu_n)}\ll \|v_\infty\|_\eta\, n^{-1/2}$.
By~\eqref{eq:tech}, $\int_\Lambda v_\infty \,d\mu_n\to \int_\Lambda v_\infty \,d\mu_\infty$ and
the result follows.
\\
(b) We have $\int_\Lambda|v_n\otimes v_n-v_\infty\otimes v_\infty|\,d\mu_n
\le (|v_n|_\infty+|v_\infty|_\infty)|v_n-v_\infty|_\infty\to0$.
Also, $\int_\Lambda v_\infty\otimes v_\infty(d\mu_n-d\mu_\infty)\to0$ by~\eqref{eq:tech}.
\end{proof}

\section{Examples}
\label{sec:ex}

In this section, we consider examples of nonuniformly expanding dynamics, including families of intermittent maps~\eqref{eq:LSV} discussed in the introduction, covered by the results in this paper.

\subsection{Application to intermittent maps}
\label{sec:PM}

Fix a family of intermittent maps $T_n:[0,1]\to[0,1]$, $n\in\N\cup\{\infty\}$, as in~\eqref{eq:LSV}
with parameters $\gamma_n\in(0,\frac12)$ such that 
$\lim_{n\to\infty}\gamma_n=\gamma_\infty$.  
By~\cite[Example~5.1]{KKM17}, 
$T_n$ is a uniform family of nonuniformly expanding maps of order $p$ for
all $p\in(2,\gamma_\infty^{-1})$.  
By~\cite{BaladiTodd16,Korepanov16}, $\mu_n$ is strongly statistically stable.
That is, the densities $h_n=d\mu_n/d\Leb$ satisfy
$\lim_{n\to\infty}\int_\Lambda|h_n-h_\infty|\,d\Leb=0$.
Using this property, conditions~\eqref{eq:tech} are easily verified.

Hence our main results on control of iterated moments (Corollary~\ref{cor:moments}) and the iterated WIP (Theorem~\ref{thm:WIP} and Lemma~\ref{lem:En}) hold  for families of intermittent maps $T_n$ and H\"older observables $v_n:[0,1]\to\R^d$ with $\int v_n\,d\mu_n=0$ and $\lim_{n\to\infty}\|v_n-v_\infty\|_\eta=0$.
Also the auxiliary properties in Proposition~\ref{prop:aux} are satisfied.
This leads via Part 2 to homogenization results $\hat x_n\to_{\mu_n}X$ for fast-slow systems~\eqref{eq:fs}.  Since $\mu_n(\hat x_n\in B)-\mu_\infty(\hat x_n\in B)=\int_\Lambda 1_{\{\hat x_n\in B\}}(h_n-h_\infty)\,d\Leb$ for suitable subsets $B\subset D([0,1],\R^d)$, it follows from strong statistical stability that $\hat x_n\to_{\mu_\infty}X$.

In the remainder of this subsection, we show that all our results remain valid when $\mu_n$ is replaced by Lebesgue measure.  (We continue to assume that the observables  $v_n$ are centered with respect to $\mu_n$, so $\int v_n\,d\mu_n=0$.)

The densities $h_n=d\mu_n/d\Leb$ are uniformly bounded below
(see~\cite[Lemma~2.4]{LSV99} for explicit lower bounds).
Hence it is immediate that the moment estimates in Corollary~\ref{cor:moments} hold with $\mu_n$ changed to $\Leb$.
Since $\Leb$ is not invariant, the following nonstationary version of the moment estimates is required in Part~2:
\begin{prop} \label{prop:momentsLeb}
\(
\big|\sum_{\ell\le j<k}v_n\circ T_n^j\big|_{L^{2(p-1)}(\Leb)}\le C(k-\ell)^{1/2}\|v_n\|_\eta\) and
\newline \(\big|\sum_{\ell\le i<j<k}(v_n\circ T_n^i)\otimes(v_n\circ T_n^j)\big|_{L^{p-1}(\Leb)}\le C(k-\ell)\|v_n\|_\eta^2
\)
for all $0\le \ell<k<n$.
\end{prop}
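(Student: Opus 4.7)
The plan is to reduce the non-stationary $L^q(\Leb)$-estimates to the stationary $L^q(\mu_n)$-estimates already furnished by Corollary~\ref{cor:moments}, via a change of variables under $T_n^\ell$. Writing $\Leb_\ell=(T_n^\ell)_*\Leb$ for the pushforward, the main point I need to establish is that $\Leb_\ell$ is dominated by $\mu_n$ with a constant uniform in both $\ell$ and $n$; given that, everything else is routine translation between the two measures.

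For the domination, I would exploit the uniform lower bound $h_n\ge c>0$ from~\cite[Lemma~2.4]{LSV99} together with the $T_n$-invariance of $\mu_n$: for any measurable $f$ and $q\ge 1$,
\[
\int_\Lambda |f|^q\,d\Leb_\ell
= \int_\Lambda |f\circ T_n^\ell|^q\,d\Leb
\le c^{-1}\int_\Lambda |f\circ T_n^\ell|^q\,d\mu_n
= c^{-1}\int_\Lambda |f|^q\,d\mu_n.
\]
The $T_n$-invariance is what removes $\ell$ from the right-hand side and makes the comparison uniform in $\ell$; the uniform lower bound on $h_n$ is what makes it uniform in $n$. This is where I expect all of the (modest) difficulty to sit.

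To finish, I would observe that the partial sums in the proposition are exact compositions with $T_n^\ell$: namely, after shifting indices,
\[
\sum_{\ell\le j<k} v_n\circ T_n^j = \Bigl(\sum_{0\le j<k-\ell}v_n\circ T_n^j\Bigr)\circ T_n^\ell,
\]
and analogously the iterated sum equals $\BBS_{k-\ell}v_n\circ T_n^\ell$. Hence their $L^q(\Leb)$-norms equal the $L^q(\Leb_\ell)$-norms of the corresponding partial sums without the composition, which by the above display are bounded by $c^{-1/q}$ times their $L^q(\mu_n)$-norms. Applying Corollary~\ref{cor:moments} with sum-length $k-\ell$ in place of $n$ (its constant $C$ being uniform in the family index) yields the two stated bounds, after absorbing the factor $c^{-1/q}$ into $C$.
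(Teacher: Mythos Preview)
Your proof is correct and is essentially the same argument as the paper's: both use the uniform lower bound on the densities $h_n$ to dominate $\Leb$ by $\mu_n$, and the $T_n$-invariance of $\mu_n$ to shift the sum from $\{\ell,\dots,k-1\}$ to $\{0,\dots,k-\ell-1\}$, then invoke Corollary~\ref{cor:moments}. Your packaging via the pushforward $\Leb_\ell=(T_n^\ell)_*\Leb$ is just a rewording of the same two steps in a slightly different order.
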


\begin{proof}
Since $\mu_n$ is $T_n$-invariant, it follows from
Corollary~\ref{cor:moments} that
\begin{align*} 
\Big|\sum_{\ell\le j<k}v_n\circ T_n^j\Big|_{L^{2(p-1)}(\mu_n)}
 =\Big|\sum_{0\le j<k-\ell} & v_n\circ T_n^j\Big|_{L^{2(p-1)}(\mu_n)}
\ll (k-\ell)^{1/2}\|v_n\|_\eta
\\ 
\Big|\sum_{\ell\le i<j<k}(v_n\circ T_n^i)\otimes(v_n\circ T_n^j)\Big|_{L^{p-1}(\mu_n)}
& = \Big|\sum_{0\le i<j<k-\ell}(v_n\circ T_n^i)\otimes(v_n\circ T_n^j)\Big|_{L^{p-1}(\mu_n)}
 \\ & 
\ll (k-\ell)\|v_n\|_\eta^2.
\end{align*}
Now use that the densities $h_n$ are uniformly bounded below.
\end{proof}

Next we turn to the iterated WIP.  Defining $\Sigma_n$ and $E_n$ as in~\eqref{eq:driftn} for $n\in\N\cup\{\infty\}$, we already have that
$\lim_{n\to\infty}\Sigma_n=\Sigma_\infty$ and
$\lim_{n\to\infty}E_n= E_\infty$ by Lemma~\ref{lem:En}.

\begin{prop} \label{prop:WIPLeb}
$(W_n,\BBW_n)\to_\Leb(W,\BBW)$ as $n \to \infty$
  in $D([0,1], \R^d \times \R^{d \times d})$,
where
    $W$ is $d$-dimensional Brownian motion with covariance matrix $\Sigma_\infty$ and
    $\BBW(t)=\int_0^t W\otimes dW+E_\infty t$.
\end{prop}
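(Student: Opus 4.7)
My plan is to apply Corollary~\ref{cor:WIP} with $\lambda_n=\Leb$. Since $\mu_n$ and $\Leb$ are mutually absolutely continuous on $[0,1]$, the relevant density is $\rho_n = d\Leb/d\mu_n = 1/h_n$. The hypotheses $\lim_{n\to\infty}\Sigma_n = \Sigma_\infty$ and $\lim_{n\to\infty}E_n = E_\infty$ in Corollary~\ref{cor:WIP} are already delivered by Lemma~\ref{lem:En}, as noted in the paragraph preceding the proposition, so the only task is to verify the two conditions on $\rho_n$ in Corollary~\ref{cor:WIP}.

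First, I would use that, by \cite[Lemma~2.4]{LSV99}, the densities $h_n$ are uniformly bounded below by some $c>0$. This gives $|\rho_n|_\infty\le 1/c$ and hence $\sup_n\int\rho_n^{1+\delta}\,d\mu_n<\infty$ for every $\delta>0$. Second, I would verify the averaging condition through the remark following Corollary~\ref{cor:WIP}, i.e.\ by establishing $\sup_n\|\rho_n\|_\eta<\infty$. Standard structural results for LSV invariant densities provide a factorization $h_n(x) = \hat h_n(x)/x^{\gamma_n}$ with $\hat h_n$ bounded above, bounded below, and uniformly Lipschitz on $[0,1]$. Consequently $\rho_n(x) = x^{\gamma_n}/\hat h_n(x)$ is $\gamma_n$-H\"older on $[0,1]$, with H\"older norm bounded uniformly in $n$. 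Since $\gamma_n\to\gamma_\infty>0$ we have $\inf_n\gamma_n>0$, so after shrinking $\eta$ if necessary (which is harmless, since all our observables $v_n$ remain uniformly H\"older of any smaller exponent on the bounded space $[0,1]$), I may assume $\eta\le\inf_n\gamma_n$, which gives $\sup_n\|\rho_n\|_\eta<\infty$. Corollary~\ref{cor:WIP} then yields the claim.

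The hard part is the uniform-in-$n$ regularity of $1/h_n$. The factorization of LSV densities is standard but a precisely formulated uniform statement must be extracted from the literature. As a fallback, the averaging condition can be verified directly: approximate $\rho_n$ in $L^1(\mu_n)$ by a uniformly $\eta$-H\"older function $\tilde\rho_n$ (for instance truncating $h_n$ from above near $0$), apply Corollary~\ref{cor:moments} to the centered observable $\tilde\rho_n-\int\tilde\rho_n\,d\mu_n$ to get an $N^{-1/2}$ bound on the ergodic averages in $L^{2(p-1)}(\mu_n)$, and then let the approximation error go to zero; this reduces the task to an estimate of $\int|1/h_n - \tilde\rho_n|\,d\mu_n = \int|1 - h_n\tilde\rho_n|\,d\Leb$ that is small uniformly in $n$ since $h_n\to h_\infty$ in $L^1(\Leb)$.
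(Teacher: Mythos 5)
Your overall framework is the right one and matches the paper's: pass from Theorem~\ref{thm:WIP} to $\Leb$ via Corollary~\ref{cor:WIP} with $\rho_n=h_n^{-1}$, and get condition (S1) from the uniform lower bound on $h_n$ of \cite[Lemma~2.4]{LSV99}. The divergence is in how you verify the Ces\`aro-average condition (S5), and here your primary route has a genuine gap. You reduce (S5) to $\sup_n\|\rho_n\|_\eta<\infty$ via the remark after Corollary~\ref{cor:WIP}, and this in turn rests on the claimed factorization $h_n(x)=\hat h_n(x)/x^{\gamma_n}$ with $\hat h_n$ \emph{uniformly} Lipschitz and bounded above and below. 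That is not a "standard structural result" you can simply cite: the cone of \cite{LSV99} gives monotonicity and the two-sided bound $c\le h_n(x)\le a x^{-\gamma_n}$, but uniform Lipschitz control of $x^{\gamma_n}h_n(x)$ across the family is a nontrivial regularity statement about the invariant densities that the paper never establishes or needs. Your fallback (truncate $h_n$, apply Corollary~\ref{cor:moments} to the truncation, control the $L^1$ error by uniform integrability of $\{h_n\}$) is sounder in spirit, but it still requires the truncated densities $\min(h_n,M)$ to be uniformly H\"older in $n$ for fixed $M$, which again needs quantitative two-sided asymptotics for $h_n$ near the fixed point. So as written, the key step is deferred to an unproved uniform regularity input.

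The paper's proof of (S5) avoids density regularity entirely. It writes
\[
\int_\Lambda\Big|\tfrac1N\sum_{j=0}^{N-1}\rho_n\circ T_n^j-1\Big|\,d\mu_n
\le I_1(N,n)+I_2(N,n)+I_3(N,n)+I_4(N),
\]
where $I_1,I_2$ compare $\rho_n$ with $\rho_\infty$ and $\mu_n$ with $\mu_\infty$ (controlled by strong statistical stability $\int|h_n-h_\infty|\,d\Leb\to0$ together with $|\rho_\infty|_\infty<\infty$), $I_3$ compares $T_n^j$ with $T_\infty^j$ (controlled by~\eqref{eq:tech}, continuity of $\rho_\infty$ and dominated convergence), and $I_4$ is the Ces\`aro average for the fixed limiting map $T_\infty$, which tends to $0$ as $N\to\infty$ by the mean ergodic theorem. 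Thus the only inputs are the uniform lower bound on $h_n$, $L^1$ convergence of the densities, and convergence of the iterates, all of which are already in hand. I would recommend replacing your regularity argument with this telescoping comparison; alternatively, if you insist on the H\"older route, you must actually prove the uniform factorization, which is substantially harder than the statement you are trying to establish.
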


\begin{proof}
By Theorem~\ref{thm:WIP}, 
$(W_n,\BBW_n)\to_{\mu_n}(W,\BBW)$ as $n \to \infty$
  in $D([0,1], \R^d \times \R^{d \times d})$.
To pass from $\mu_n$ to $\Leb$,
 we apply Corollary~\ref{cor:WIP}.
Let $\rho_n=h_n^{-1}=d\Leb/d\mu_n$.  
Then $\sup_n|\rho_n|_\infty<\infty$.
To deal with the remaining assumption in Corollary~\ref{cor:WIP},
write
\[
\int_\Lambda\Big|\frac1N\sum_{j=0}^{N-1}\rho_n\circ T_n^j-1\Big|\,d\mu_n
\le I_1(N,n)+I_2(N,n)+I_3(N,n)+I_4(N)
\]
where
\begin{align*}
I_1 & 
= \int_\Lambda\frac1N\Big|\sum_{j=0}^{N-1}(\rho_n\circ T_n^j-\rho_\infty\circ T_n^j)\Big|\,d\mu_n, \quad
I_2  =
\int_\Lambda\frac1N\sum_{j=0}^{N-1}\rho_\infty\circ T_n^j|h_n-h_\infty|\,d\Leb, \\
I_3  &=
\int_\Lambda\frac1N\Big|\sum_{j=0}^{N-1}(\rho_\infty\circ T_n^j-\rho_\infty\circ T_\infty^j)\Big|\,d\mu_\infty, \quad
I_4  =
\int_\Lambda\Big|\frac1N\sum_{j=0}^{N-1}\rho_\infty\circ T_\infty^j-1\Big|\,d\mu_\infty.
\end{align*}

Fix $N\ge1$.
By $T_n$-invariance of $\mu_n$, 
\[
I_1(N,n)\le 
\int_\Lambda|\rho_n-\rho_\infty|\,d\mu_n=\int_\Lambda|h_n-h_\infty|\rho_\infty\,d\Leb
\le |\rho_\infty|_\infty\int_\Lambda|h_n-h_\infty|\,d\Leb,
\]
and also
\[
I_2(N,n)
\le |\rho_\infty|_\infty\int_\Lambda|h_n-h_\infty|\,d\Leb.
\]
By boundedness of $\rho_\infty$ and strong statistical stability,
$\lim_{n\to\infty}I_1(N,n)
=\lim_{n\to\infty}I_2(N,n)=0$.
By continuity of $\rho_\infty$ and the dominated convergence theorem,
$\lim_{n\to\infty}I_3(N,n)=0$.
Hence for each fixed $N\ge1$,
\[
\limsup_{n\to\infty}
\int_\Lambda\Big|\frac1N\sum_{j=0}^{N-1}\rho_n\circ T_n^j-1\Big|\,d\mu_n
\le I_4(N).
\]
By the mean ergodic theorem, $\lim_{N\to\infty}I_4(N)=0$.
Hence $\limsup_{n\to\infty}
\int_\Lambda\Big|\frac1N\sum_{j=0}^{N-1}\rho_n\circ T_n^j-1\Big|\,d\mu_n\to 0$ as $N\to\infty$.
This verifies the final assumption in Corollary~\ref{cor:WIP} completing the proof.
\end{proof}

Finally, we consider the analogue of Proposition~\ref{prop:aux} with $\mu_n$ replaced by $\Leb$ where appropriate.  
Again we can relax the assumptions on $v_n$;
it suffices that $v_n\in L^\infty(\Lambda,\R^d)$, $n\ge1$, 
and $v_\infty\in C^\eta(\Lambda,\R^d)$ 
with $\lim_{n\to\infty}|v_n-v_\infty|_\infty=0$.

Recall that $V_n=n^{-1}\sum_{j<[nt]}v_n\circ T_n^j$ where $t\in[0,1]$ is fixed.
\begin{prop} \label{prop:auxLeb}
\begin{itemize}
\item[(a)]
$\lim_{n\to\infty}\big|V_n -t\int_\Lambda v_\infty\,d\mu_\infty\big|_{L^p(\Leb)}=0$.
\item[(b)] $\lim_{n\to\infty}n^{-1}\sum_{j<n}\int_\Lambda (v_n\otimes v_n)\circ T_n^j\,d\Leb
= \int_\Lambda v_\infty\otimes v_\infty\,d\mu_\infty$.
\end{itemize}
\end{prop}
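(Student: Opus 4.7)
The plan is to deduce both parts from Proposition~\ref{prop:aux} (the $\mu_n$-version) by exploiting two ingredients: the uniform lower bound $h_n \ge c > 0$ for the densities $h_n = d\mu_n/d\Leb$ (already used in the paper, via~\cite[Lemma~2.4]{LSV99}), and, for (b), the uniform quantitative rate supplied by Corollary~\ref{cor:moments}.

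For (a), I would invoke Proposition~\ref{prop:aux}(a) to obtain $|V_n - t\int v_\infty \, d\mu_\infty|_{L^p(\mu_n)} \to 0$ and then transfer this to $L^p(\Leb)$ via
\[
|X|_{L^p(\Leb)}^p = \int |X|^p \rho_n \, d\mu_n \le |\rho_n|_\infty \, |X|_{L^p(\mu_n)}^p,
\]
where $\rho_n = 1/h_n$ has uniformly bounded $L^\infty$-norm by the lower bound on $h_n$.

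For (b) I would first replace $v_n$ by $v_\infty$ in the integrand: the identity $v_n \otimes v_n - v_\infty \otimes v_\infty = v_n \otimes (v_n - v_\infty) + (v_n - v_\infty) \otimes v_\infty$ gives $|v_n \otimes v_n - v_\infty \otimes v_\infty|_\infty \le (|v_n|_\infty + |v_\infty|_\infty)|v_n - v_\infty|_\infty \to 0$, so (since $\Leb$ is a probability) this replacement costs only $o(1)$. It then suffices to prove $n^{-1} \sum_{j<n} \int (v_\infty \otimes v_\infty) \circ T_n^j \, d\Leb \to \int v_\infty \otimes v_\infty \, d\mu_\infty$. Setting $w_n = v_\infty \otimes v_\infty - \int v_\infty \otimes v_\infty \, d\mu_n$, one has $\int w_n \, d\mu_n = 0$ and $\sup_n \|w_n\|_\eta < \infty$ (since $v_\infty \in C^\eta$ is fixed and the subtracted constant matrix is uniformly bounded). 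Corollary~\ref{cor:moments}, applied component-wise, yields
\[
\Big| n^{-1} \sum_{j<n} w_n \circ T_n^j \Big|_{L^{2(p-1)}(\mu_n)} \ll n^{-1/2}
\]
uniformly in $n$. Since $2(p-1)\ge 1$ (because $p>2$), this embeds into $L^1(\mu_n)$; the uniform lower bound on $h_n$ then transfers the $L^1(\mu_n)$-bound into an $L^1(\Leb)$-bound of the same order. Integrating and using $\Leb(\Lambda)=1$ gives
\[
\Big| n^{-1} \sum_{j<n} \int (v_\infty \otimes v_\infty) \circ T_n^j \, d\Leb - \int v_\infty \otimes v_\infty \, d\mu_n \Big| \ll n^{-1/2} \to 0,
\]
and~\eqref{eq:tech} supplies $\int v_\infty \otimes v_\infty \, d\mu_n \to \int v_\infty \otimes v_\infty \, d\mu_\infty$, completing the argument.

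The main obstacle is precisely that $\Leb$ is not $T_n$-invariant, so the shortcut that trivialises the $\mu_n$-analogue (namely $\int f \circ T_n^j \, d\mu_n = \int f \, d\mu_n$, which collapses the sum) is unavailable for Lebesgue. One must genuinely exploit the uniform (in both $n$ and the sum length) ergodic-average rate from Corollary~\ref{cor:moments}; this is why the uniform expansion/distortion constants for the family $\{T_n\}$, the hypothesis $p>2$, and the uniform lower bound on the densities all play an essential role.
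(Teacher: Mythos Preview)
Your proof is correct and follows essentially the same route as the paper: both parts transfer the $\mu_n$-estimates to $\Leb$ via the uniform lower bound on $h_n$, and part~(b) relies on Corollary~\ref{cor:moments} applied to a centred version of the tensor square. The only cosmetic difference is that the paper centres $v_n\otimes v_n$ directly (defining $w_n=v_n\otimes v_n-\int v_n\otimes v_n\,d\mu_n$) and then invokes Proposition~\ref{prop:aux}(b), whereas you first swap $v_n$ for $v_\infty$ and then centre---your variant has the slight advantage of working cleanly under the relaxed hypothesis $v_n\in L^\infty$.
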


\begin{proof}
(a) Using again that the densities $h_n=d\mu_n/d\Leb$ are uniformly bounded below,
\[
\int_\Lambda\Big|V_n- t\int_\Lambda v_\infty\,d\mu_\infty\Big|\,d\Leb
\ll \int_\Lambda\Big|V_n- t\int_\Lambda v_\infty\,d\mu_\infty\Big|\,d\mu_n\to0
\]
by Proposition~\ref{prop:aux}(a).

\noindent(b) Set $w_n=v_n\otimes v_n-\int_\Lambda v_n\otimes v_n \,d\mu_n$.
Then $w_n\in C^\eta(\Lambda,\R^{d\times d})$ with $\int_\Lambda w_n\,d\mu_n=0$ and
\begin{align*}
\Big|n^{-1}\sum_{j<n}\int_\Lambda(v_n\otimes v_n)\circ T_n^j\,d\Leb & -\int_\Lambda v_n\otimes v_n\,d\mu_n\Big|
 \le 
n^{-1}\int_\Lambda\Big|\sum_{j<n}w_n\circ T_n^j\Big|\,d\Leb
\\ & \ll n^{-1}\int_\Lambda\Big|\sum_{j<n}w_n\circ T_n^j\Big|\,d\mu_n
 \ll n^{-1/2}\|w_n\|_\eta\to0
\end{align*}
by Corollary~\ref{cor:moments}.
\end{proof}

\subsection{Further examples}

In~\cite{KKM18}, the WIP and estimates of ordinary moments were obtained for many examples of nonuniformly expanding dynamics.  We now obtain the corresponding iterated results.

Revisiting~\cite[Example~5.2]{KKM17} and~\cite[Example~4.10, Example~7.3]{KKM18}, we consider families of quadratic maps $T_n:[-1,1]\to[-1,1]$, $n\in\N\cup\{\infty\}$, given by $T_n(x)=1-a_n x^2$, $a_n\in[0,2]$
with $\lim_{n\to\infty}a_n=a_\infty$.  Fixing $b,c>0$ we assume that the Collet-Eckmann condition $|(T_n^k)'(1)|\ge ce^{bk}$ holds for all $k\ge0$, $n\ge1$.\footnote{There is a typo in~\cite[Example~4.10]{KKM18} where $ce^{bn}$ should be $ce^{bk}$.}
The set of parameters such that this Collet-Eckmann condition holds
has positive Lebesgue measure for $b,c$ sufficiently small.
Moreover $T_n$ is a uniform family of nonuniformly expanding maps of order $p$ (for any $p$) and satisfies strong statistical stability.
Hence we obtain control of iterated moments (Corollary~\ref{cor:moments}) and the iterated WIP (Theorem~\ref{thm:WIP} and Lemma~\ref{lem:En}) for H\"older observables $v_n:[-1,1]\to\R^d$ with $\int_\Lambda v_n\,d\mu_n=0$ and $\lim_{n\to\infty}|v_n-v_\infty|_\infty=0$.

Revisiting~\cite[Example~5.4]{KKM17} and~\cite[Example~4.11, Example~7.3]{KKM18}, we consider families of Viana maps $T_n:S^1\times\R\to S^1\times\R$, $n\in\N\cup\{\infty\}$. 
Again, we obtain control of iterated moments and the iterated WIP.

In both sets of examples, we obtain homogenization results $\hat x_n\to_{\mu_n}X$ by Part~2
and $\hat x_n\to_{\mu_\infty}X$ by strong statistical stability as explained at the beginning of Subsection~\ref{sec:PM}.

\section{Families of nonuniformly expanding semiflows}
\label{sec:flow}

In this section, we consider uniform families of nonuniformly expanding semiflows.
These are modelled as suspensions over uniform families of nonuniformly expanding maps.
In keeping with the program of~\cite{MS11}, no mixing assumptions are imposed on the semiflows.

Specifically, let $T_n:\Lambda_n\to\Lambda_n$, $n\ge1$, be a uniform family of nonuniformly expanding maps of order $p\ge2$ as in Section~\ref{sec:NUEn} with ergodic invariant probability measures $\mu_{\Lambda_n}$.
Let $h_n:\Lambda_n\to\R^+$ be a family of roof functions satisfying 
\mbox{$\sup_n\|h_n\|_\eta<\infty$} and $\inf_n \inf h_n>0$.  
For each $n\ge1$, define the suspension
\[
  \Omega_n=\Lambda_n^{h_n} = \{ (x,u) \in \Lambda_n \times \R :
  0 \leq u \le h_n(x)\}/\sim\, ,
\quad  (x, h_n(x)) \sim (T_n x,0).
\]
The suspension flow \(g_{n,t}:\Omega_n\to\Omega_n\) is given by \(g_{n,t}(x,u) = (x,u+t)\)
computed modulo identifications.
Let \(\bar h_n = \int_{\Lambda_n} h_n \, d\mu_{\Lambda_n}\).
Then
\(
  \mu_n =\mu_{\Lambda_n}^{h_n}
  = (\mu_{\Lambda_n} \times \text{Lebesgue}) 
  / \bar h_n
\)
is an ergodic \(g_{n,t}\)-invariant probability measure 
on $\Omega_n$. 
We call $g_{n,t}:\Omega_n\to\Omega_n$ a {\em uniform family of nonuniformly expanding semiflows of order $p$}.

To simplify the statement of results in this section, we denote by \(C\) various constants
depending continuously on the data associated with $T_n:\Lambda_n\to\Lambda_n$ as well as $\sup_n\|h_n\|_\eta$.

For $v:\Omega_n\to\R^d$, define 
\[
  \|v\|_\eta = |v|_\infty + |v|_\eta
  \quad \text{where} \quad 
  |v|_\eta = 
\sup_{(x,u)\neq(x',u)\in \Omega_n}\frac{|v(x,u)-v(x',u)|}{d_{\Lambda_n}(x,x')^\eta}.
\]

\subsection{Moment estimates}
\label{sec:momentflow}

As in Sections~\ref{sec:NUE} and~\ref{sec:NUEn}, for uniform moment estimates it suffices to consider a fixed uniformly expanding semiflow $g_t:\Omega\to\Omega$.
The main result in this section, Theorem~\ref{thm:momentflow}, establishes the desired moment estimates.  We also collect together some other results that fit best into the fixed semiflow setting.

Given $v:\Omega\to\R^d$, define 
the {\em induced observable}
\[
\tilde v:\Lambda\to\R, \qquad
\SMALL \tilde v(x)=\int_0^{h(x)}v(x,u)\,du.
\]

\begin{prop} \label{prop:tildev}
$|\tilde v|_\infty\le |h|_\infty|v|_\infty$ and
$\|\tilde v\|_\eta\le \|h\|_\eta \|v\|_\eta$.
\end{prop}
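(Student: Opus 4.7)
The plan is to prove both bounds by direct pointwise calculation on the definition $\tilde v(x)=\int_0^{h(x)} v(x,u)\,du$, with the only care needed being that the two integrals $\tilde v(x)$ and $\tilde v(x')$ are over intervals of possibly different lengths $h(x)$ and $h(x')$.

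First, the sup bound is immediate: for each $x$, $|\tilde v(x)|\le\int_0^{h(x)} |v(x,u)|\,du\le h(x)|v|_\infty\le |h|_\infty|v|_\infty$. Taking the supremum over $x$ gives $|\tilde v|_\infty\le |h|_\infty|v|_\infty$.

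For the H\"older bound, fix $x,x'\in\Lambda$ and assume without loss of generality that $h(x')\le h(x)$. I would split the difference as
\[
\tilde v(x)-\tilde v(x')=\int_0^{h(x')}\bigl(v(x,u)-v(x',u)\bigr)\,du+\int_{h(x')}^{h(x)} v(x,u)\,du.
\]
On the common interval $[0,h(x')]$, both $v(x,u)$ and $v(x',u)$ are defined (since $u\le h(x')\le h(x)$), so the definition of $|v|_\eta$ applies at fixed $u$ and the first integrand is bounded by $|v|_\eta\, d_\Lambda(x,x')^\eta$. The second integral is taken over an interval of length $h(x)-h(x')$, and the integrand is bounded by $|v|_\infty$. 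Using that $h$ is H\"older in the sense that $|h(x)-h(x')|\le |h|_\eta\, d_\Lambda(x,x')^\eta$, I get
\[
|\tilde v(x)-\tilde v(x')|\le |h|_\infty |v|_\eta\, d_\Lambda(x,x')^\eta + |h|_\eta |v|_\infty\, d_\Lambda(x,x')^\eta,
\]
so that $|\tilde v|_\eta\le |h|_\infty |v|_\eta+|h|_\eta |v|_\infty$.

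Combining with the sup bound and expanding the product,
\[
\|\tilde v\|_\eta=|\tilde v|_\infty+|\tilde v|_\eta\le |h|_\infty|v|_\infty+|h|_\infty|v|_\eta+|h|_\eta|v|_\infty\le (|h|_\infty+|h|_\eta)(|v|_\infty+|v|_\eta)=\|h\|_\eta\|v\|_\eta,
\]
as required. There is no real obstacle here; the only thing to be careful about is the asymmetric splitting (choosing the shorter fibre for the ``comparison'' integral and the longer one for the ``leftover''), which is what ensures that both pieces of the integrand are defined wherever they are evaluated.
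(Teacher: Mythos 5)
Your proof is correct and follows essentially the same route as the paper: split the difference into an integral over the common interval $[0,\min(h(x),h(x'))]$, controlled by $|v|_\eta$, plus a leftover of length $|h(x)-h(x')|$, controlled by $|h|_\eta|v|_\infty$, and then absorb everything into the product $\|h\|_\eta\|v\|_\eta$. Your explicit remark that the comparison integral must be taken over the shorter fibre so that both $v(x,u)$ and $v(x',u)$ are defined is exactly the point the paper handles by assuming $h(x)\le h(x')$.
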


\begin{proof}
The estimate for $|\tilde v|_\infty$ is immediate.
Also, for $x,x'\in \Lambda$ with $h(x)\le h(x')$,
\begin{align*}
|\tilde v(x)-\tilde v(x')| & \le |h(x)-h(x')||v|_\infty
+\int_0^{h(x)}|v(x,u)-v(x',u)|\,du
\\ & \le |h|_\eta|v|_\infty d_\Lambda(x,x')
+\int_0^{h(x)}|v|_\eta\, d_\Lambda(x,x')\,du \le \|h\|_\eta\|v\|_\eta d_\Lambda(x,x'),
\end{align*}
completing the proof.
\end{proof}

Define
\[
S_t=\int_0^t v\circ g_s\,ds, \qquad
\BBS_t=\int_0^t\int_0^s (v\circ g_r) \otimes (v\circ g_s) \,dr\,ds
\]
on $\Omega$.
Also, for the induced observable $\tilde v:\Lambda\to\R^d$, define
\[
\tS_n(x,u)=\sum_{0\le j<n}\tilde v(T^jx), \qquad
\tBBS_n(x,u)=\sum_{0\le i<j<n}\tilde v(T^ix)\otimes \tilde v(T^jx),
\quad (x,u)\in\Omega.
\]

We introduce
the {\em lap number} $N(t):\Omega\to\N$, $t\ge0$,
\[
\SMALL N(t)(x,u)=\max\big\{n\ge0:\sum_{j=0}^{n-1}h(T^jx)\le u+t\big\}.
\]
Also, define 
\[
\SMALL H:\Omega\to\R^d, \qquad H(x,u)=\int_0^u v(x,s)\,ds.
\]

\begin{prop} \label{prop:flow}
For all $t\ge0$,
\begin{align*}
& |S_t-\tS_{N(t)}|_\infty\le 2|h|_\infty|v|_\infty,  \\
& \SMALL
   |\BBS_t-\tBBS_{N(t)}-\int_0^t(H\otimes v)\circ g_s\,ds|\le
2|h|_\infty|v|_\infty|\tS_{N(t)}|+2|h|_\infty^2|v|_\infty^2.
\end{align*}
\end{prop}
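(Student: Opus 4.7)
The plan is a direct pointwise computation: first relate the flow sum $S_t$ to the discrete sum $\tS_{N(t)}$ modulo boundary terms in $H$, then substitute this decomposition into the identity $\BBS_t = \int_0^t S_s \otimes (v\circ g_s)\,ds$.

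The preliminary identity is
\[
S_t(x,u) = \tS_{N(t)}(x,u) + H(g_t(x,u)) - H(x,u).
\]
This is obtained by splitting the orbit segment from $(x,u)$ to $g_t(x,u)$ at its successive crossings of $\Lambda\times\{0\}$: the initial fragment of duration $h(x)-u$ contributes $\int_u^{h(x)}v(x,s)\,ds = \tilde v(x) - H(x,u)$; each complete lap $[\tau_j,\tau_{j+1})$, with $\tau_j = \sum_{i=0}^{j-1}h(T^i x)-u$ the time to reach $(T^j x,0)$, contributes $\tilde v(T^j x)$; and the terminal fragment $[\tau_{N(t)},t]$ contributes $H(g_t(x,u))$. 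Since $|H(x,u)|\le u|v|_\infty\le|h|_\infty|v|_\infty$ pointwise, the first estimate follows.

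For the iterated estimate, I write $\BBS_t = \int_0^t S_s\otimes (v\circ g_s)\,ds$ and substitute the identity above:
\[
\BBS_t = \int_0^t \tS_{N(s)}\otimes(v\circ g_s)\,ds + \int_0^t (H\otimes v)\circ g_s\,ds - H(x,u)\otimes S_t.
\]
The middle term is exactly the piece subtracted in the claim. For the first integral I use that $\tS_{N(s)}$ is piecewise constant in $s$: on $[\tau_j,\tau_{j+1})$ one has $\tS_{N(s)} = \tS_j$ and $\int_{\tau_j}^{\tau_{j+1}}v\circ g_s\,ds = \tilde v(T^j x)$, so the complete laps sum to $\sum_{j=1}^{N(t)-1}\tS_j\otimes\tilde v(T^j x) = \tBBS_{N(t)}$ (the initial subinterval before $\tau_1$ contributes zero because $\tS_0 = 0$), while the terminal fragment contributes $\tS_{N(t)}\otimes H(g_t(x,u))$. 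Combining with the expansion $H\otimes S_t = H\otimes\tS_{N(t)}+H\otimes(H\circ g_t)-H\otimes H$ (again from the preliminary identity), the target discrepancy collapses to
\[
\BBS_t - \tBBS_{N(t)} - \int_0^t(H\otimes v)\circ g_s\,ds = \tS_{N(t)}\otimes(H\circ g_t) - H\otimes\tS_{N(t)} - H\otimes(H\circ g_t) + H\otimes H,
\]
and applying $|a\otimes b|\le|a||b|$ together with $|H|\le|h|_\infty|v|_\infty$ yields the second estimate.

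I expect the main obstacle to be the bookkeeping in the piecewise-constant integral computation: correctly identifying which boundary contributions survive when partitioning $[0,t]$ by the return times $\tau_j$, confirming that the first subinterval is harmless, and verifying that the terminal fragment produces precisely the $\tS_{N(t)}\otimes H(g_t)$ term which combines cleanly with $-H\otimes S_t$ so that only four outer-product remainders are left. There is no deep tool involved, but care is needed to avoid off-by-one errors in the telescoping and to handle the degenerate case $N(t)=0$ (for which the formula still holds automatically).
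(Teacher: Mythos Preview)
Your proposal is correct and follows essentially the same route as the paper: both derive the identity $S_t=\tS_{N(t)}+H\circ g_t-H$, substitute into $\BBS_t=\int_0^t S_s\otimes(v\circ g_s)\,ds$, and identify the piecewise-constant integral $\int_0^t\tS_{N(s)}\otimes(v\circ g_s)\,ds$ as $\tBBS_{N(t)}+\tS_{N(t)}\otimes(H\circ g_t)$, leaving the remainder $K(t)=\tS_{N(t)}\otimes(H\circ g_t)-H\otimes S_t$. The only cosmetic difference is that the paper bounds $|H\otimes S_t|$ directly via $|S_t|\le|\tS_{N(t)}|+2|h|_\infty|v|_\infty$, whereas you first expand $S_t$ into its three pieces; the resulting bound is identical.
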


\begin{proof}
We use formal calculations from the proof of~\cite[Proposition~7.5]{KM16}, focusing on the precise estimates.

First, $S_t=\tS_{N(t)}+ H\circ g_t-H$.  Hence 
\(
|S_t-\tS_{N(t)}|_\infty \le 2|H|_\infty\le 2|h|_\infty|v|_\infty.
\)

Second, writing $T_n = \inf \{ t' \geq 0: N(t') = n \}$, we observe that
\[
    \tBBS_{N(t)}
    = \int_0^{T_{N(t)}} \tS_{N(s)} \otimes (v \circ g_s) \, ds
    = \int_0^{t} \tS_{N(s)} \otimes (v \circ g_s) \, ds
    - \tS_{N(t)} \otimes (H \circ g_t)
    .
\]
Hence
\begin{align*}
\BBS_t
& \SMALL =\int_0^t\tS_{N(s)}\otimes (v\circ g_s) \,ds
+\int_0^t (H\otimes v)\circ g_s \,ds
-H\otimes\int_0^t v\circ g_s\,ds
 \\[.75ex] & \SMALL 
= \tBBS_{N(t)}+
\int_0^t (H\otimes v)\circ g_s \,ds+ K(t),
\end{align*}
where
$K(t)=
\tS_{N(t)}\otimes (H\circ g_t) - H\otimes S_t$.
We have
\[
|K(t)|  \le
|\tS_{N(t)}||H|_\infty
+|H|_\infty|S_t|
\le
|\tS_{N(t)}||h|_\infty|v|_\infty
+|h|_\infty|v|_\infty(|\tS_{N(t)}|+2|h|_\infty|v|_\infty).
\]
The result follows.
\end{proof}

\begin{thm}[Iterated moments] \label{thm:momentflow}  
For all $\TT\ge0$,
\[
\Big|\sup_{t\in[0,\TT]}|S_t|\Big|_{L^{2(p-1)}(\Omega)}\le C\TT^{1/2}\|v\|_\eta, \qquad
\Big|\sup_{t\in[0,\TT]}|\BBS_t|\Big|_{L^{p-1}(\Omega)}\le C\TT \|v\|_\eta^2.
\]
\end{thm}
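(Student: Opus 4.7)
The strategy is to reduce the continuous-time moment bounds to the discrete-time bounds of Theorem~\ref{thm:moments} applied to the induced observable $\tilde v$ on the base $\Lambda$, using the explicit relations furnished by Proposition~\ref{prop:flow}. The key enabling facts are: (i) $\inf h > 0$ forces the lap number $N(t)$ to be comparable with $t$; and (ii) because $h$ is bounded above and below, $L^q$ norms on $\Omega$ are comparable with $L^q$ norms on $\Lambda$ for observables that are constant on fibers.

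More precisely, set $n_\TT = \lceil \TT/\inf h\rceil + 1$; then $\sup_{t\le \TT} N(t) \le n_\TT$ pointwise on $\Omega$, and $n_\TT \ll \TT + 1$ uniformly. For any $\psi:\Lambda\to\R^m$ lifted to $\Omega$, $|\psi|_{L^q(\mu)}^q = \bar h^{-1}\int_\Lambda |\psi|^q h\,d\mu_\Lambda \ll |\psi|_{L^q(\mu_\Lambda)}^q$ with uniform implied constant. Assuming the implicit centering $\int_\Omega v\,d\mu = 0$ (equivalently $\int_\Lambda \tilde v\,d\mu_\Lambda = 0$), Proposition~\ref{prop:tildev} gives $\|\tilde v\|_\eta \ll \|v\|_\eta$, so Theorem~\ref{thm:moments} applied to $\tilde v$ yields
\[
\big|\max_{k\le n_\TT}|\tS_k|\big|_{L^{2(p-1)}(\mu_\Lambda)} \ll n_\TT^{1/2}\|v\|_\eta, \qquad \big|\max_{k\le n_\TT}|\tBBS_k|\big|_{L^{p-1}(\mu_\Lambda)} \ll n_\TT\|v\|_\eta^2.
\]

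For the ordinary moment bound, the first estimate of Proposition~\ref{prop:flow} gives $\sup_{t\le\TT}|S_t| \le 2|h|_\infty|v|_\infty + \max_{k\le n_\TT}|\tS_k|$; taking $L^{2(p-1)}(\mu)$ norms, transferring to $L^{2(p-1)}(\mu_\Lambda)$, and applying the display above yields a bound $\ll (\TT+1)^{1/2}\|v\|_\eta$, which gives $\TT^{1/2}\|v\|_\eta$ for $\TT\ge 1$. For the iterated bound, the second estimate of Proposition~\ref{prop:flow} combined with $|H\otimes v|_\infty \le |h|_\infty|v|_\infty^2$ gives
\[
\sup_{t\le\TT}|\BBS_t| \le \max_{k\le n_\TT}|\tBBS_k| + \TT|h|_\infty|v|_\infty^2 + 2|h|_\infty|v|_\infty\max_{k\le n_\TT}|\tS_k| + 2|h|_\infty^2|v|_\infty^2.
\]
Taking $L^{p-1}(\mu)$ norms, transferring to $\Lambda$, and using the embedding $L^{2(p-1)}(\mu_\Lambda) \hookrightarrow L^{p-1}(\mu_\Lambda)$ on the third term, the four contributions are $\ll n_\TT\|v\|_\eta^2$, $\TT\|v\|_\eta^2$, $n_\TT^{1/2}\|v\|_\eta^2$ and $\|v\|_\eta^2$ respectively, all absorbed in $\TT\|v\|_\eta^2$ once $\TT\ge 1$.

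There is no real obstacle; the main bookkeeping issue is that Proposition~\ref{prop:flow} contributes $O(1)$ deterministic error terms as well as the integral $\int_0^t(H\otimes v)\circ g_s\,ds$, all of which are only absorbed in $\TT^{1/2}\|v\|_\eta$ and $\TT\|v\|_\eta^2$ for $\TT\ge 1$. The range $\TT\in[0,1)$ is handled separately by the trivial bounds $|S_t|\le \TT|v|_\infty$ and $|\BBS_t|\le \tfrac12\TT^2|v|_\infty^2$, which already give $\TT^{1/2}\|v\|_\eta$ and $\TT\|v\|_\eta^2$ on this range.
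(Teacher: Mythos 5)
Your proposal is correct and follows essentially the same route as the paper's proof: reduce to Theorem~\ref{thm:moments} for the induced observable $\tilde v$ via Proposition~\ref{prop:flow}, bound the lap number linearly in $\TT$, transfer $L^q$ norms between $\Omega$ and $\Lambda$ using the boundedness of $h$, and dispose of small $\TT$ trivially. (One cosmetic point: since $N(t)(x,u)$ depends on $u\le h(x)$, your $n_\TT$ should be enlarged to, say, $\lceil(\TT+|h|_\infty)/\inf h\rceil+1$ to make the pointwise bound $\sup_{t\le\TT}N(t)\le n_\TT$ literally true; this does not affect $n_\TT\ll\TT+1$ or anything downstream.)
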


\begin{proof}
The estimates are trivial for $t\in[0,1]$
(since $t\le t^{1/2}$) so we restrict to $\TT\ge1$, $t\in[1,\TT]$.

Since $\inf h>0$, it is immediate (\cite[Proposition~7.4]{KM16}) that
\begin{align} \label{eq:lap}
|N(t)|_\infty\le C_0\,  t \quad\text{for all $t\ge1$,}
\end{align}
where $C_0=(\inf h)^{-1}+1$.

By~\eqref{eq:lap},
\begin{align*}
\int_\Omega\sup_{1\le t\le \TT}|\tS_{N(t)}|^{2(p-1)}\,d\mu
& \le \int_\Omega\max_{k\le C_0\TT}|\tS_k|^{2(p-1)}\,d\mu
\\ & \le \bar h^{-1}|h|_\infty 
\int_\Lambda\max_{k\le C_0\TT}\Big|\sum_{0\le j<k}\tilde v\circ T^j\Big|^{2(p-1)}\,d\mu_\Lambda.
\end{align*}
Hence by Theorem~\ref{thm:moments},
\[
\Big|\sup_{1\le t\le \TT}|\tS_{N(t)}|\Big|_{L^{2(p-1)}(\Omega)}
\ll
\Big|\max_{k\le C_0\TT}\Big|\sum_{0\le j<k}\tilde v\circ T^j\Big|\Big|_{L^{2(p-1)}(\Lambda)}
\ll \TT^{1/2}\|\tilde v\|_\eta\ll \TT^{1/2}\|v\|_\eta.
\]
Similarly, \(\big|\sup_{1\le t\le \TT}|\tBBS_{N(t)}|\big|_{L^{p-1}(\Omega)}\ll \TT\|v\|_\eta^2\).
Also, $\int_0^\TT|(H\otimes v)\circ g_s|\,ds\le \TT|h|_\infty|v|_\infty^2$ so the result follows from Proposition~\ref{prop:flow}.
\end{proof}

\begin{cor} \label{cor:moment}
For all $\TT\ge0$,
\[
\Big|\sup_{t\in[0,\TT]}\Big|\int_0^t(H\otimes v)\circ g_s\,ds-t\int_\Omega H\otimes v\,d\mu\Big|\Big|_{L^{2(p-1)}(\Omega)}\le C\TT^{1/2}\|v\|_\eta^2.
\]
\end{cor}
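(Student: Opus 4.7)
The plan is to reduce Corollary~\ref{cor:moment} to a direct application of Theorem~\ref{thm:momentflow} for the centered, matrix-valued observable
\[
w := H \otimes v - \int_\Omega H \otimes v\,d\mu : \Omega \to \R^{d \times d}.
\]
Since $\int_0^t w \circ g_s\,ds = \int_0^t (H \otimes v)\circ g_s\,ds - t\int_\Omega H \otimes v\,d\mu$, the quantity we wish to bound is exactly the ordinary-moment quantity $S_t^{(w)}$ to which Theorem~\ref{thm:momentflow} applies (entrywise, viewing $\R^{d \times d}$ as $\R^{d^2}$ and then recombining with the Frobenius norm, since the $L^{2(p-1)}$-bound in Theorem~\ref{thm:momentflow} is linear in $\|v\|_\eta$).

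The only substantive step is therefore to check $\|w\|_\eta \le C\|v\|_\eta^2$. Since $H(x,u) = \int_0^u v(x,s)\,ds$ and $u \le h(x) \le |h|_\infty$, we have $|H|_\infty \le |h|_\infty |v|_\infty$. For points $(x,u),(x',u)$ in the same fibre of $\Omega$,
\[
|H(x,u) - H(x',u)| \le \int_0^u |v(x,s) - v(x',s)|\,ds \le |h|_\infty |v|_\eta\, d_\Lambda(x,x')^\eta,
\]
so $\|H\|_\eta \le C\|v\|_\eta$ (with $C$ depending on $\|h\|_\eta$, as permitted by the conventions of Section~\ref{sec:momentflow}). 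The elementary product estimate $\|H \otimes v\|_\eta \le |H|_\infty |v|_\eta + |H|_\eta |v|_\infty + |H|_\infty|v|_\infty \le C\|v\|_\eta^2$ then yields $\|w\|_\eta \le 2\|H \otimes v\|_\eta \le C\|v\|_\eta^2$.

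Plugging this into Theorem~\ref{thm:momentflow} applied to $w$ gives
\[
\Big|\sup_{t \in [0,\TT]}\Big|\int_0^t w \circ g_s\,ds\Big|\Big|_{L^{2(p-1)}(\Omega)} \le C\TT^{1/2}\|w\|_\eta \le C'\TT^{1/2}\|v\|_\eta^2,
\]
which is exactly the claimed estimate. No real obstacle is expected: the one nonroutine ingredient is the H\"older bound on $H$, which is an elementary one-variable integral estimate, and the passage from vector- to matrix-valued observables in Theorem~\ref{thm:momentflow} is immediate from its proof.
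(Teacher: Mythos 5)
Your proposal is correct and is essentially identical to the paper's proof: the paper likewise applies the $S_t$ estimate of Theorem~\ref{thm:momentflow} to the centered observable $H\otimes v-\int_\Omega H\otimes v\,d\mu$ and concludes via $\|H\otimes v\|_\eta\le\|H\|_\eta\|v\|_\eta\le |h|_\infty\|v\|_\eta^2$, which is the same H\"older/product estimate you carry out.
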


\begin{proof}
Using the $S_t$ estimate in Theorem~\ref{thm:momentflow} with
$v$ replaced by $H\otimes v - \int_\Omega H\otimes v\,d\mu$,
we obtain $\big|\sup_{t\in[0,\TT]}|\int_0^t(H\otimes v)\circ g_s\,ds-t\int_\Omega H\otimes v\,d\mu|\big|_{2(p-1)}\ll \TT^{1/2}\|H\otimes v\|_\eta$.
In addition,
$\|H\otimes v\|_\eta\le \|H\|_\eta\|v\|_\eta
\le |h|_\infty\|v\|_\eta^2$.
\end{proof}

\begin{lemma} \label{lem:Cs}
For all $s\in[0,\inf h]$, $n\ge1$, 
\[
\Big|\sup_{t\in[0,1]}|\tS_{[nt]}\circ g_s-\tS_{[nt]}|\Big|_\infty \le 2|h|_\infty |v|_\infty,
\quad 
\Big|\sup_{t\in[0,1]}|\tBBS_{[nt]}\circ g_s-\tBBS_{[nt]}|\Big|_{L^{2(p-1)}(\Omega)} \le C n^{1/2}\|v\|_\eta^2.
\]
\end{lemma}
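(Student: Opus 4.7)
The plan is to exploit the fact that $\tS_n$ and $\tBBS_n$ depend only on the $\Lambda$-coordinate. For $s\in[0,\inf h]$, the flow acts either as $g_s(x,u)=(x,u+s)$ on $\{u+s<h(x)\}$ (where both differences vanish identically) or as $g_s(x,u)=(Tx,u+s-h(x))$ on the complement (which remains in $\Omega$ because $s\le\inf h\le h(Tx)$). So I would reduce everything to bounding $\tS_n\circ T-\tS_n$ and $\tBBS_n\circ T-\tBBS_n$ pointwise on $\Lambda$, uniformly in $n$. The first expression telescopes: $\tS_n(Tx)-\tS_n(x)=\tilde v(T^nx)-\tilde v(x)$, which is bounded by $2|\tilde v|_\infty\le 2|h|_\infty|v|_\infty$ via Proposition~\ref{prop:tildev}. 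Since this bound is uniform in $n$ and $t$, the first inequality follows immediately.

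For the second estimate, a one-step reindexing of the double sum yields
\[
\tBBS_n(Tx)-\tBBS_n(x)=\Big(\sum_{1\le i<n}\tilde v\circ T^i\Big)\otimes(\tilde v\circ T^n)-\tilde v\otimes\sum_{1\le j<n}\tilde v\circ T^j,
\]
so $|a\otimes b|\le|a||b|$ gives the pointwise bound $|\tBBS_n(Tx)-\tBBS_n(x)|\le 2|\tilde v|_\infty|\sum_{1\le j<n}\tilde v\circ T^j|$. Taking $\sup_{t\in[0,1]}$ converts this into $2|\tilde v|_\infty\max_{k\le n}|\sum_{1\le j<k}\tilde v\circ T^j|$, a quantity depending only on $x$. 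I would then pass from $L^{2(p-1)}(\Omega)$ to $L^{2(p-1)}(\Lambda)$ using $d\mu\le\bar h^{-1}|h|_\infty\,d\mu_\Lambda\,du$, reindex the inner sums by one step via $T$-invariance of $\mu_\Lambda$, and apply Theorem~\ref{thm:moments} to $\tilde v$. Combined with $\|\tilde v\|_\eta\le\|h\|_\eta\|v\|_\eta$ from Proposition~\ref{prop:tildev}, this produces the claimed $Cn^{1/2}\|v\|_\eta^2$ after absorbing $h$-dependent factors into $C$.

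The only nontrivial point is that Theorem~\ref{thm:moments} requires $\int_\Lambda\tilde v\,d\mu_\Lambda=0$, which is just $\bar h\int_\Omega v\,d\mu=0$: the standing centering hypothesis implicit throughout Section~\ref{sec:flow} (without it, even Theorem~\ref{thm:momentflow} would fail, as one sees by taking $v\equiv 1$, since then $\tBBS_n$ grows quadratically and the shift by $T$ produces an $O(n)$, rather than $O(n^{1/2})$, contribution). Beyond verifying this centering and writing out the telescoping identity, the argument is a routine combination of earlier results.
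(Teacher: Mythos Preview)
Your proposal is correct and follows essentially the same approach as the paper's proof: both split according to whether the flow crosses the roof (equivalently, whether $N(s)=0$ or $1$), observe that the differences vanish in the first case, and in the second case reduce to the telescoping identities for $\tS_n\circ T-\tS_n$ and $\tBBS_n\circ T-\tBBS_n$, bounding the latter by $2|\tilde v|_\infty$ times a Birkhoff sum and invoking Theorem~\ref{thm:moments}. Your version is slightly more explicit about the passage from $L^{2(p-1)}(\Omega)$ to $L^{2(p-1)}(\Lambda)$ and the centering hypothesis $\int_\Lambda\tilde v\,d\mu_\Lambda=0$, which the paper takes for granted as part of the standing setup.
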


\begin{proof}
The random variable $N(s)(x,u)$ lies in $\{0,1\}$ due to the restriction on $s$.
If $N(s)(x,u)=0$, then $g_s(x,u)=(x,u+s)$.
Now, $\tS_n$ and $\tBBS_n$ are independent of $u$, and so
$\tS_{[nt]}\circ g_s\equiv\tS_{[nt]}$ and 
$\tBBS_{[nt]}\circ g_s\equiv\tBBS_{[nt]}$ for all $n,t$ and all $s,x,u$ with
$N(s)(x,u)=0$.

Hence we may suppose for the remainder of the proof that $N(s)\equiv1$
in which case $g_s(x,u)=(fx,u+s-h(x))$.
Then,
\[
|\tS_{[nt]}\circ g_s(x,u)-\tS_{[nt]}(x,u)|  =
\Big|
\sum_{0\le j<[nt]} (\tilde v(T^{j+1}x)
- \tilde v(T^jx))\Big|
\le 2|\tilde v|_\infty \le 2|h|_\infty|v|_\infty.
\]

Next, 
\begin{align*}
|\tBBS_{[nt]}\circ g_s(x,u)-\tBBS_{[nt]}(x,u)| & =
\Big|
\sum_{0\le i< j<[nt]} \big(\tilde v(T^{i+1}x) \otimes \tilde v(T^{j+1}x)
- \tilde v(T^ix) \otimes \tilde v(T^jx)\big)\Big|
\\ & = \Big| \sum_{1\le i< [nt]} 
 \tilde v(T^ix) \otimes \tilde v(T^{[nt]}x)
-\sum_{1\le j< [nt]} 
\tilde v \otimes \tilde v(T^jx) \Big|
\\ & \le 2|\tilde v|_\infty \Big|\Big(\sum_{0\le j< [nt]-1}\tilde v\circ T^j\Big)(fx)\Big|
\end{align*}
Hence
$\Big|\sup_{t\in[0,1]}|\tBBS_{[nt]}\circ g_s-\tBBS_{[nt]}|\Big|_{2(p-1)} \ll  
n^{1/2}\|\tilde v\|_\eta^2  \ll
n^{1/2}\|v\|_\eta^2$ by 
Theorem~\ref{thm:moments}.
\end{proof}

\begin{prop} 
  \label{prop:laps}
      \(
        \big| 
          \sup_{t \le \TT} |N(t) - t / \bar h| 
        \big|_{L^{2(p-1)}(\Omega)}
        \leq C \TT^{1/2}
      \) for \(\TT \geq 1\).
\end{prop}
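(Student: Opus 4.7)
The plan is to reduce this to an ordinary moment bound for Birkhoff sums of the centered roof function $h - \bar h$ on the base $(\Lambda, \mu_\Lambda)$, and then apply Theorem~\ref{thm:moments}.

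First, I would set up notation: let $\tilde h = h - \bar h$ and $\tilde H_n = \sum_{j=0}^{n-1} \tilde h \circ T^j = H_n - n \bar h$, where $H_n(x) = \sum_{j=0}^{n-1} h(T^j x)$. By the very definition of the lap number,
\[
H_{N(t)}(x) \,\le\, u + t \,<\, H_{N(t)+1}(x),
\]
for every $(x,u) \in \Omega$. Rearranging in terms of $\tilde H_n$ and dividing by $\bar h$ gives the two-sided bound
\[
|N(t) - (u+t)/\bar h| \,\le\, \bar h^{-1}\bigl(|\tilde H_{N(t)}(x)| + |\tilde H_{N(t)+1}(x)|\bigr) + 1.
\]
Adding the trivial estimate $|u|/\bar h \le |h|_\infty/\bar h$ converts $(u+t)/\bar h$ into $t/\bar h$ at the cost of an $O(1)$ term.

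Second, I would use \eqref{eq:lap} to replace the random index $N(t)$ by a deterministic one: since $|N(t)|_\infty \le C_0 t \le C_0 \TT$ uniformly on $\Omega$ for $t \in [1,\TT]$, we have
\[
\sup_{t \le \TT} |\tilde H_{N(t)(x,u)}(x)| \;\le\; \max_{n \le C_0 \TT + 1} |\tilde H_n(x)|,
\]
and the right-hand side depends only on $x \in \Lambda$.

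Third, since $h$ is H\"older with $\|h\|_\eta$ uniformly controlled and $\int_\Lambda \tilde h \, d\mu_\Lambda = 0$, Theorem~\ref{thm:moments} applied to $\tilde h$ on the base yields
\[
\bigl|\max_{n \le C_0 \TT + 1} |\tilde H_n|\bigr|_{L^{2(p-1)}(\mu_\Lambda)} \;\le\; C \TT^{1/2} \|h\|_\eta.
\]
Finally, since $d\mu = (\mu_{\Lambda} \times \Leb)/\bar h$ on $\Omega$ and $\tilde H_n$ is independent of the fiber coordinate $u$, the $L^{2(p-1)}(\mu)$ norm of this quantity is bounded by $(|h|_\infty/\bar h)^{1/(2(p-1))}$ times its $L^{2(p-1)}(\mu_\Lambda)$ norm. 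Combining with the preceding inequality and the $O(1)$ terms gives the desired bound, with the case $\TT \in [0,1]$ being trivial.

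There is no real obstacle here; the argument is routine once one recognises that the deviation $N(t) - t/\bar h$ is controlled by Birkhoff sums of $h - \bar h$, and that the moment estimate of Theorem~\ref{thm:moments} applies directly to these sums. The only care needed is to pass cleanly from the random stopping index $N(t)$ to the maximum over a deterministic range via \eqref{eq:lap}, and to absorb the measure-change factor from $\mu_\Lambda$ to $\mu$.
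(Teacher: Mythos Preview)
Your proposal is correct and follows essentially the same route as the paper's proof: both bound $|N(t)-t/\bar h|$ by the centred Birkhoff sum $|S_{N(t)}h - N(t)\bar h|$ plus $O(1)$ terms, replace the random index $N(t)$ by a deterministic maximum via~\eqref{eq:lap}, and then invoke Theorem~\ref{thm:moments} for the H\"older observable $h-\bar h$ on $(\Lambda,\mu_\Lambda)$. Your version is slightly more explicit about the measure change from $\mu_\Lambda$ to $\mu$ and carries along the redundant term $\tilde H_{N(t)+1}$, but these are cosmetic differences only.
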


\begin{proof}
Let $S_kh=\sum_{j=0}^{k-1}h\circ T^j$.
By definition of $N(t)$,
\[
S_{N(t)(x,u)}h(x)\le u+t < S_{N(t)(x,u)+1}h(x).
\]
Hence 
\(
-S_{N(t)(x,u)}h(x)-|h|_\infty< -t \le  -S_{N(t)(x,u)}h(x)+|h|_\infty,
\)
so
\[
|N(t)(x,u)-t/\bar h|\le \{|S_{N(t)(x,u)}h(x)-N(t)(x,u)\bar h|+2|h|_\infty\}/\bar h.
\]
  By~\eqref{eq:lap}, for all \((x,u) \in \Omega\),
  \[
    \sup_{t \le \TT} |N(t)(x,u) - t / \bar h|
    \ll \max_{k \leq C_0\TT}  \big| \SMALL S_kh(x) - k \bar h \big| + 1
    .
  \]
Hence by Theorem~\ref{thm:moments},
  \[
      \big|
    \sup_{t \le \TT} |N(t) - t / \bar h|
\big|_{L^{2(p-1)}(\Omega)}
    \ll 
      \big|\max_{k \leq C_0\TT} | S_kh - k \bar h |\big|_{L^{2(p-1)}(\Lambda)}
    \ll \TT^{1/2}
    ,
  \]
as required.
\end{proof}

\subsection{Iterated weak invariance principle}
\label{sec:WIPflow}

Let $g_{n,t}:\Omega_n\to\Omega_n$ be a uniform family of nonuniformly expanding semiflows of order $p\ge 2$.
Let $v_n:\Omega_n\to\R^d$, $n\ge1$, be a family of observables with 
$\sup_{n\ge1}\|v_n\|_\eta<\infty$ and
$\int_{\Omega_n}v_n\,d\mu_n=0$.
The corresponding family of induced observables $\tilde v_n:\Lambda_n\to\R^d$ satisfies
$\sup_{n\ge1}\|\tilde v_n\|_\eta<\infty$ and
$\int_{\Lambda_n}v_n\,d\mu_{\Lambda_n}=0$.
Define $\Sigma_n$ and $E_n$ in terms of $\tilde v_n$ as in~\eqref{eq:driftn}.
Also define $H_n(x,u)=\int_0^u v_n(x,s)\,ds$.

In this section, we prove an iterated WIP for 
the processes $W_n \in C([0,1], \R^d)$ and
$\BBW_n \in C([0,1],\R^{d \times d})$ on $\Omega_n$ given by
\[
  W_n(t) = \frac{1}{\sqrt n} \int_0^{ n t } v_n \circ g_{n,s} \, ds
  , \qquad
  \BBW_n(t) = \int_0^t W_n(s) \otimes dW_n(s)
  .
\]

First, we consider the processes $\tW_n\in D([0,1],\R^d)$, $\tBBW_n\in D([0,1],\R^{d\times d})$
\[
  \tW_n(t)(x,u) = \frac{1}{\sqrt n} \sum_{j=0}^{[n t]-1}\tilde v_n(T_n^jx), \qquad
  \tBBW_n(t)(x,u) = \frac1n \sum_{0\leq i<j<[nt]} \tilde v_n(T_n^ix) \otimes \tilde v_n(T_n^jx) ,
\] 
defined on \(\Omega_n\).
Let $N_n(t)$ denote the lap numbers  corresponding to the semiflows $g_{n,t}$  on $\Omega_n$.  Also define
\[
\gamma_n\in D([0,1],\R), \qquad
\gamma_n(t)=n^{-1}N_n(nt).
\]

\begin{prop}
  \label{prop:gg}
Suppose that $E_n \to E$,
  $\Sigma_n\to\Sigma$, $\bar h_n\to\bar h$. Then
  \[
(\tW_n, \tBBW_n)\circ \gamma_n \to_{\mu_n} (\bar h^{-1/2}\tW,\bar h^{-1}\tBBW) 
\quad\text{in}\;\, D([0,1], \R^d \times \R^{d \times d}),
\]
  where
    \(\tW\) is a \(d\)-dimensional Brownian motion with covariance matrix
      \(\Sigma\) and
    \(\tBBW(t) = \int_0^t \tW \otimes d\tW + E t\).
\end{prop}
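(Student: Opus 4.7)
The plan is to deduce the convergence in three steps: (i) apply the iterated WIP at the base, Theorem~\ref{thm:WIP}, to the induced observables $\tilde v_n$; (ii) transfer this marginal convergence from $\mu_{\Lambda_n}$ to the suspension measure $\mu_n$ and couple it with the lap-number process $\gamma_n$; (iii) perform the time change and identify the limit via Brownian scaling.

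For step~(i), Proposition~\ref{prop:tildev} gives $\sup_n\|\tilde v_n\|_\eta<\infty$, while $\int_{\Omega_n}v_n\,d\mu_n=0$ together with Fubini yields $\int_{\Lambda_n}\tilde v_n\,d\mu_{\Lambda_n}=0$. By~\eqref{eq:driftn}, the covariance and iterated-drift matrices associated to $(T_n,\tilde v_n)$ are precisely the $\Sigma_n$ and $E_n$ appearing in the statement, so Theorem~\ref{thm:WIP} yields $(\tW_n,\tBBW_n)\to_{\mu_{\Lambda_n}}(\tW,\tBBW)$ with the limit as described. To pass from $\mu_{\Lambda_n}$ to $\mu_n$, observe that $\tW_n$ and $\tBBW_n$ depend only on the base coordinate $x$, so their $\mu_n$-distribution coincides with their law under the $x$-marginal of $\mu_n$, which has density $\rho_n=h_n/\bar h_n$ with respect to $\mu_{\Lambda_n}$. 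The assumptions $\sup_n\|h_n\|_\eta<\infty$ and $\inf_n\inf h_n>0$ ensure $\sup_n\|\rho_n\|_\eta<\infty$, so the remark following Corollary~\ref{cor:WIP} applies and the same limit is attained under $\mu_n$.

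For step~(ii), Proposition~\ref{prop:laps} bounds $\big|\sup_{t\in[0,1]}|\gamma_n(t)-t/\bar h_n|\big|_{L^{2(p-1)}(\mu_n)}$ by $O(n^{-1/2})$. Combined with $\bar h_n\to\bar h$, this shows $\gamma_n\to\gamma$ uniformly in $\mu_n$-probability, where $\gamma(t)=t/\bar h$ is a deterministic continuous, strictly increasing function. A Slutsky-type converging-together argument then upgrades the marginal convergence of step~(i) to joint convergence $(\tW_n,\tBBW_n,\gamma_n)\to_{\mu_n}(\tW,\tBBW,\gamma)$.

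For step~(iii), the composition map $(y,\lambda)\mapsto y\circ\lambda$ between the relevant Skorohod spaces is continuous at any pair for which $y$ is continuous and $\lambda$ is continuous and non-decreasing. Since $\tW$ and $\tBBW$ are almost surely continuous and $\gamma$ is continuous and increasing, the continuous mapping theorem yields $(\tW_n\circ\gamma_n,\tBBW_n\circ\gamma_n)\to_{\mu_n}(\tW\circ\gamma,\tBBW\circ\gamma)$. Brownian scaling then identifies $\tW(\cdot/\bar h)$ with $\bar h^{-1/2}\tW$ in law, and the corresponding scaling of the It\^o integral gives $\tBBW(t/\bar h)=\int_0^{t/\bar h}\tW\otimes d\tW+Et/\bar h$ equal in law to $\bar h^{-1}\tBBW(t)$, which is the claimed limit. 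The main technical point is that composition with the random time change is continuous at the limit; this combines the almost sure continuity of the Brownian processes with the uniform-in-probability convergence $\gamma_n\to\gamma$ from Proposition~\ref{prop:laps}, which in turn relies on the base moment estimate in Theorem~\ref{thm:moments}.
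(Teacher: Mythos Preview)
Your argument is correct and follows the same three-stage structure as the paper's proof: establish the iterated WIP for $(\tW_n,\tBBW_n)$ under $\mu_n$, couple with the deterministic limit $\gamma$ of the lap-number process via Proposition~\ref{prop:laps}, then compose using the continuous mapping theorem and identify the limit by Brownian scaling. Steps~(ii) and~(iii) coincide with the paper essentially word for word.

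The one genuine difference is in step~(i), the transfer of convergence from $\mu_{\Lambda_n}$ to the suspension measure $\mu_n$. The paper works on $\Omega_n$: it introduces an auxiliary measure $\lambda_n$ with density $\rho_n=c_0^{-1}1_{\Lambda_n\times[0,c_0]}$ (so that the $\lambda_n$-law of $(\tW_n,\tBBW_n)$ coincides with its $\mu_{\Lambda_n}$-law), and then invokes the \emph{semiflow} strong distributional convergence Lemma~\ref{lem:SDC}, using Lemma~\ref{lem:Cs} to verify hypothesis~(S2) and Theorem~\ref{thm:momentflow} applied to $\rho_n-1$ to verify~(S3). You instead stay on $\Lambda_n$: since $(\tW_n,\tBBW_n)$ depends only on $x$, its $\mu_n$-law equals its law under the $x$-marginal $(h_n/\bar h_n)\,d\mu_{\Lambda_n}$, and the \emph{discrete-time} Corollary~\ref{cor:WIP} (via the remark that follows it) applies directly because $\sup_n\|h_n/\bar h_n\|_\eta<\infty$. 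Your route is shorter and bypasses Lemma~\ref{lem:Cs} altogether; the paper's route stays within the semiflow framework being developed in Section~\ref{sec:flow} and exercises Lemma~\ref{lem:SDC}.
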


\begin{proof}
Choose $c_0>0$ such that $h_n\ge c_0$ for all $n$.
  Let \(\lambda_n\) be the sequence of probability measures on \(\Omega_n\) 
supported on $\Lambda_n\times[0,c_0]$ 
with density
  \(
    \rho_n = d\lambda_n/d\mu_n = 1_{\Lambda_n\times[0,c_0]}/c_0.
  \)

  The process
  \((\tW_n, \tBBW_n)\) on $(\Omega_n,\lambda_n)$ has the same distribution as 
the process \((\tW_n, \tBBW_n)|_{\Lambda_n}\) on
  \((\Lambda_n, \mu_{\Lambda_n})\), so by Theorem~\ref{thm:WIP},
  \((\tW_n, \tBBW_n) \to_{\lambda_n} (\tW, \tBBW)\).
  
  By Lemma~\ref{lem:Cs}, for each $s\in[0,c_0]$,
\[
\big|\sup_{t\in[0,1]}|\tW_{[nt]}\circ g_{n,s}-\tW_{[nt]}|\big|_\infty \ll n^{-1/2},
\qquad 
\big|\sup_{t\in[0,1]}|\tBBW_{[nt]}\circ g_{n,s}-\tBBW_{[nt]}|\big|_{L^1(\mu_n)} \ll n^{-1/2}.
\]
Also, $|\rho_n|_\infty\le 1/c_0$ and $|\rho_n|_\eta=0$ so by Theorem~\ref{thm:momentflow} with $v=\rho_n-1$,
\begin{align*} 
\SMALL |\int_0^\TT \rho_n \circ g_{n,t} \, dt - \TT|_{L^2(\mu_n)} \ll \TT^{1/2}.
\end{align*}
  We have verified the assumptions of Lemma~\ref{lem:SDC},
  and it follows that \((\tW_n, \tBBW_n) \to_{\mu_n} (\tW, \tBBW)\).

Let $\gamma(t)=t\bar h^{-1}$.
By Proposition~\ref{prop:laps},
\[
\big|\sup_{t\le1}|\gamma_n(t)-\gamma(t)|\big|_{L^1(\mu_n)}=n^{-1}\big|\sup_{t\le 1}|N_n(nt)-nt\bar h^{-1}|\big|_{L^1(\mu_n)} = O(n^{-1/2}).
\]
Since $\gamma$ is not random it follows that
$(\tW_n,\tBBW_n,\gamma_n)\to_{\mu_n}(\tW,\tBBW,\gamma)$.
By the continuous mapping theorem, 
\begin{align*}
(\tW_n,\tBBW_n)\circ \gamma_n \to_{\mu_n}
(\tW,\tBBW)\circ\gamma=(\bar h^{-1/2}\tW,\bar h^{-1}\tBBW),
\end{align*}
as required.
\end{proof}

\begin{thm}[Iterated WIP] \label{thm:WIPflow}
  Suppose that \(\lim_{n\to\infty}\Sigma_n = \Sigma\), \(\lim_{n\to\infty}E_n = E\), \(\lim_{n\to\infty}\bar h_n = \bar h\)
  and \( \lim_{n\to\infty}\int_{\Omega_n} H_n\otimes v_n\,d\mu_n = E'\).
  Then \[(W_n, \BBW_n) \to_{\mu_n} (W, \BBW)
  \quad\text{in}\;\, D([0,1], \R^d \times \R^{d \times d}),\]
  where
    \(W\) is a \(d\)-dimensional Brownian motion with covariance matrix
      \(\bar h^{-1} \Sigma\) and
    \(\BBW(t) = \int_0^t W \otimes dW + (\bar h^{-1}E+E')t\).
\end{thm}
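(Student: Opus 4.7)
The plan is to reduce Theorem~\ref{thm:WIPflow} to Proposition~\ref{prop:gg} by relating $(W_n,\BBW_n)$ to the discrete-time induced processes time-changed by the lap-number $\gamma_n(t)=n^{-1}N_n(nt)$, together with a deterministic drift correction handled by Corollary~\ref{cor:moment}.

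First, by direct computation $W_n(t) = n^{-1/2} S^{(n)}_{nt}$ where $S^{(n)}_t = \int_0^t v_n\circ g_{n,s}\,ds$; and applying Fubini to $\int_0^t W_n(s)\otimes dW_n(s)$ followed by the change of variable $r = ns$ gives $\BBW_n(t) = n^{-1}\BBS^{(n)}_{nt}$ with $\BBS^{(n)}_t$ as in Subsection~\ref{sec:momentflow}. Since $n\gamma_n(t) = N_n(nt)\in\Z$, we can identify $\tW_n(\gamma_n(t)) = n^{-1/2}\tS^{(n)}_{N_n(nt)}$ and $\tBBW_n(\gamma_n(t)) = n^{-1}\tBBS^{(n)}_{N_n(nt)}$ with the corresponding induced quantities on $\Lambda_n$.

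Next, applying Proposition~\ref{prop:flow} uniformly in $t\in[0,1]$ to $v_n,T_n$ yields
\[
\sup_{t\in[0,1]}|W_n(t) - \tW_n(\gamma_n(t))|_\infty \ll n^{-1/2}
\]
and
\[
\BBW_n(t) \;=\; \tBBW_n(\gamma_n(t)) \;+\; n^{-1}\!\int_0^{nt}(H_n\otimes v_n)\circ g_{n,s}\,ds \;+\; r_n(t),
\]
with $|r_n(t)| \ll n^{-1}|\tS^{(n)}_{N_n(nt)}| + n^{-1}$. Theorem~\ref{thm:momentflow} combined with Proposition~\ref{prop:flow} gives $\big|\sup_{t\le 1}|\tS^{(n)}_{N_n(nt)}|\big|_{L^{2(p-1)}(\mu_n)} \ll n^{1/2}$, so $\sup_t|r_n(t)| \to_{\mu_n} 0$.

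Third, by Corollary~\ref{cor:moment} applied with $v = v_n$,
\[
\sup_{t\in[0,1]}\Big| n^{-1}\!\int_0^{nt}(H_n\otimes v_n)\circ g_{n,s}\,ds \;-\; t\!\int_{\Omega_n}\!H_n\otimes v_n\,d\mu_n\Big| \;\to_{\mu_n}\; 0,
\]
and the hypothesis $\int_{\Omega_n} H_n\otimes v_n\,d\mu_n \to E'$ forces the drift term to converge uniformly in probability to the deterministic path $t\mapsto tE'$.

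Finally, Proposition~\ref{prop:gg} yields $(\tW_n,\tBBW_n)\circ\gamma_n \to_{\mu_n} (\bar h^{-1/2}\tW,\bar h^{-1}\tBBW)$ in $D([0,1],\R^d\times\R^{d\times d})$. Combining with the two previous displays through Slutsky's lemma (the perturbations converge uniformly to zero, resp.\ to a deterministic continuous path) gives $(W_n,\BBW_n)\to_{\mu_n}(W,\BBW)$, where $W = \bar h^{-1/2}\tW$ is $d$-dimensional Brownian motion with covariance $\bar h^{-1}\Sigma$. The second coordinate is identified using the scaling $\int_0^t W\otimes dW = \bar h^{-1}\int_0^t \tW\otimes d\tW$, so that the limit equals $\bar h^{-1}\tBBW(t) + tE' = \int_0^t W\otimes dW + \bar h^{-1}Et + tE' = \int_0^t W\otimes dW + (\bar h^{-1}E+E')t$, as required. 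The main obstacle is the bookkeeping in the second step: the quadratic dependence of $\BBW_n$ on $v_n$ pushes all available moment estimates to their limit, and it is precisely Theorem~\ref{thm:momentflow} and Corollary~\ref{cor:moment} that make the error terms $r_n$ and the drift correction negligible.
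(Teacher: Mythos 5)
Your proposal is correct and follows essentially the same route as the paper's proof: reduce to Proposition~\ref{prop:gg} via the lap-number time change, bound $\sup_t|W_n(t)-\tW_n(\gamma_n(t))|$ and the remainder in the $\BBW_n$ decomposition using Proposition~\ref{prop:flow} together with the moment estimates, and handle the drift term with Corollary~\ref{cor:moment} and the hypothesis $\int_{\Omega_n}H_n\otimes v_n\,d\mu_n\to E'$. The only cosmetic difference is that the paper bounds $\sup_t|\tS^{(n)}_{N_n(nt)}|$ directly from the lap-number bound~\eqref{eq:lap} and Theorem~\ref{thm:moments} rather than passing through Theorem~\ref{thm:momentflow}, which is immaterial.
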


\begin{proof}
By Proposition~\ref{prop:gg}, it suffices to show that
$\sup_{t\le1}|W_n(t)-\tW_n(\gamma_n(t))|\to_{\mu_n}0$ and 
$\sup_{t\le 1}|\BBW_n(t)-\tBBW_n(t)\circ\gamma_n-tE'|\to_{\mu_n}0$.

First, note by Proposition~\ref{prop:flow} that
\begin{align*}
|W_n(t)-\tW_n(\gamma_n(t))|(x,u) & 
=n^{-1/2}\Big|\int_0^{nt} v_n(g_{n,s}(x,u))\,ds-\sum_{0\le j<N_n(nt)}\tilde v_n(T_n^jx)\Big| 
\\ & \le 2n^{-1/2}|h_n|_\infty|v_n|_\infty,
\end{align*}
so $\big|\sup_{t\in[0,1]}|W_n(t)-\tW_n(\gamma_n(t))|\big|_\infty\to0$.

Similarly, by Proposition~\ref{prop:flow},
\begin{align*}
 n|\BBW_n(t) & -\tBBW_n(\gamma_n(t))-n^{-1}{\SMALL\int}_0^{nt} (H_n\otimes v_n)\circ g_{n,s}\,ds|(x,u)
 \\ & \le 2|h_n|_\infty|v_n|_\infty \Big|\sum_{0\le j< N_n(t)}\tilde v_n(T_n^jx)\Big|+3|h_n|_\infty^2 |v_n|_\infty^2
 \ll  \Big|\sum_{0\le j< N_n(t)}\tilde v_n(T_n^jx)\Big|+1.
\end{align*}

By~\eqref{eq:lap}
and Theorem~\ref{thm:moments},
\begin{align*}
\Big|\sup_{t\in[0,1]}\Big|\sum_{0\le j< N_n(t)}\tilde v_n\circ T_n^j\Big|\Big|_{L^2(\Omega_n)} 
 & \ll \Big|\max_{k\le C_0n}\Big|\sum_{0\le j<k}\tilde v_n\circ f_{\Delta_n}^j\Big|\Big|_{L^2(\Lambda_n)} 
 \ll n^{1/2}\|\tilde v_n\|_\eta\ll n^{1/2},
\end{align*}
and so
 \[
\Big|\sup_{t\in[0,1]}|\BBW_n(t)  -\tBBW_n(\gamma_n(t))-n^{-1}{\SMALL\int}_0^{nt} (H_n\otimes v_n)\circ g_{n,s}\,ds|\Big|_{L^2(\Omega_n)} \to0.
\]
Also, by Corollary~\ref{cor:moment},
\[
\SMALL n^{-1}\big|\sup_{t\in[0,1]}|\int_0^{nt}(H_n\otimes v_n)\circ g_{n,s}\,ds -nt\int_{\Omega_n} H_n\otimes v_n\,d\mu_n|\big|_{L^2(\Omega_n)} \to0.
\]
Hence $\big|\sup_{t\in[0,1]}|\BBW_n(t)  -\tBBW_n(\gamma_n(t))-tE'|\big|_{L^2(\Omega_n)}\to0$ and the proof is complete.
\end{proof}

\subsection{Application to intermittent semiflows}

Let $\Lambda=[0,1]$.
Fix a family of intermittent maps $T_n:\Lambda\to\Lambda$, $n\in\N\cup\{\infty\}$, as in~\eqref{eq:LSV}
with parameters $\gamma_n\in(0,\frac12)$ and absolutely continuous invariant probability measures denoted $\tilde\mu_n$.  Suppose that
$\lim_{n\to\infty}\gamma_n=\gamma_\infty$.  
Again,
$T_n$ is a uniform family of order~$p$ for all $p\in(2,\gamma_\infty^{-1})$ and 
the absolutely continuous invariant probability measures, denoted here by $\tilde\mu_n$,
are strongly statistically stable.

Fix $\eta>0$ and 
let $h_n:\Lambda\to\R^+$ be a family of roof functions satisfying 
\mbox{$\sup_n\|h_n\|_\eta<\infty$} and $\inf_n \inf h_n>0$.  
Define the corresponding uniform family of nonuniformly expanding semiflows
$g_{n,t}:\Omega_n\to\Omega_n$ with ergodic invariant probability measures 
$\mu_n=(\tilde\mu_n\times{\rm Lebesgue})/\bar h_n$ where $\bar h_n=\int_\Lambda h_n\,d\tilde\mu_n$.

\begin{thm}
Let $v_n:\Omega_n\to\R^d$, $n\ge1$, with 
$\sup_n\|v_n\|_\eta<\infty$ and $\int_{\Omega_n}v_n\,d\mu_n=0$.
Then there is a constant $C>0$ such that
\begin{align*}
& \bigg|\sup_{t\in[0,\TT]}\Big|\int_0^t v_n\circ g_{n,s}\,ds\Big|\bigg|_{L^{2(p-1)}(\Omega_n)}\le C\TT^{1/2}, 
\\ & \bigg|\sup_{t\in[0,\TT]}\Big|\int_0^t\int_0^s (v_n\circ g_{n,r}) \otimes (v_n\circ g_{n,s}) \,dr\,ds\Big|\bigg|_{L^{p-1}(\Omega_n)}\le C\TT,
\end{align*}
for all $\TT\ge0$, $n\ge1$.
\end{thm}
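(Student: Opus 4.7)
The plan is to deduce this as a direct corollary of Theorem~\ref{thm:momentflow} combined with the uniformity of the underlying data for the family $g_{n,t}:\Omega_n\to\Omega_n$. The crux is that although Theorem~\ref{thm:momentflow} was stated for a fixed semiflow, its constant $C$ was declared (at the start of Section~\ref{sec:momentflow}) to depend continuously only on the data of the base map $T_n:\Lambda_n\to\Lambda_n$ together with $\sup_n\|h_n\|_\eta$. So the entire argument reduces to verifying that for this family these pieces of data can be chosen uniformly in $n$.

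First, I would recall from Subsection~\ref{sec:PM} that the family of intermittent maps $T_n$ with $\gamma_n\to\gamma_\infty\in(0,\tfrac12)$ is a uniform family of nonuniformly expanding maps of order $p$ for any $p\in(2,\gamma_\infty^{-1})$; in particular the expansion and distortion constants $C_1,\bbeta,\eta$ can be taken independent of $n$ and the family $\{\tau_n^p\}$ is uniformly integrable. Since the problem allows any such $p$, I would fix one in this range at the outset. The hypotheses on the roof functions, namely $\sup_n\|h_n\|_\eta<\infty$ and $\inf_n\inf h_n>0$, are exactly those in the definition of a uniform family of nonuniformly expanding semiflows of order $p$ given in Section~\ref{sec:flow}. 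Hence $g_{n,t}:\Omega_n\to\Omega_n$ is such a uniform family.

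Next, I would invoke Theorem~\ref{thm:momentflow} applied to each individual semiflow $g_{n,t}$ with observable $v_n$. Since $\sup_n\|v_n\|_\eta<\infty$ and $\int_{\Omega_n}v_n\,d\mu_n=0$, the theorem yields
\[
\Big|\sup_{t\in[0,\TT]}\Big|\int_0^t v_n\circ g_{n,s}\,ds\Big|\Big|_{L^{2(p-1)}(\Omega_n)}\le C_n\TT^{1/2}\|v_n\|_\eta,
\]
and similarly for the iterated integral, with $C_n$ depending continuously on the data of $T_n$ and on $\|h_n\|_\eta$. The work is in observing that by uniformity this dependence yields $\sup_n C_n<\infty$, so absorbing $\sup_n\|v_n\|_\eta$ and $\sup_n\|v_n\|_\eta^2$ into a single constant $C$ gives the desired bounds uniformly in $n$.

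I do not anticipate a genuine obstacle: everything is a packaging statement, and the real content is already in Theorem~\ref{thm:momentflow} together with the uniform-family framework of Section~\ref{sec:NUEn} and the stability statements in Subsection~\ref{sec:PM}. The only point to be careful about is making the chain of dependencies explicit, in particular that the constant in Theorem~\ref{thm:momentflow} goes back, via Theorem~\ref{thm:moments} and Proposition~\ref{prop:tildev}, to quantities that are all uniformly controlled across the family.
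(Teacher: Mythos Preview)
Your proposal is correct and is exactly the paper's approach: the paper's proof is the single line ``This is immediate from Theorem~\ref{thm:momentflow}'', and you have simply unpacked why the constant there is uniform in $n$ via the uniform-family framework and the standing conventions of Section~\ref{sec:flow}. Your extra care in tracing the dependence of $C$ back through Theorem~\ref{thm:moments}, Proposition~\ref{prop:tildev}, and $\inf_n\inf h_n>0$ is welcome but not a different method.
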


\begin{proof}
This is immediate from Theorem~\ref{thm:momentflow}.
\end{proof}

\begin{thm}
Let $v_n:\Omega_n\to\R^d$, $n\in\N\cup\{\infty\}$, with 
$\sup_n\|v_n\|_\eta<\infty$ and \mbox{$\int_{\Omega_n}v_n\,d\mu_n=0$}.
Suppose that 
$\lim_{n\to\infty}\sup_{x\in\Lambda,\,u\in[0,h_\infty(x)]\cap[0,h_n(x)]}|v_n(x,u)-v_\infty(x,u)|=0$
and $\lim_{n\to\infty}|h_n-h_\infty|_\infty=0$.
\begin{itemize}
\item[(a)] 
Define 
\[
S_n = \sum_{0\le j<n}\tilde v\circ T_\infty^j,
\qquad
\BBS_n = \sum_{0\le i<j<n}
(\tilde v\circ T_\infty^i)
\otimes (\tilde v\circ T_\infty^j).
\]
where $\tilde v(x)=\int_0^{h_\infty(x)}v_\infty(x,u)\,du$.
Then the limits 
\begin{align*}  
\Sigma_\infty
 =\lim_{n\to\infty}\frac1n\int_\Lambda S_n\otimes S_n\,d\tilde\mu_\infty, \qquad
E_\infty = \lim_{n\to\infty}\frac1n \int_\Lambda \BBS_n\,d\tilde\mu_\infty.
\end{align*}
exist.
\item[(b)]
Set $E'=\int_{\Omega_\infty}H_\infty\otimes v_\infty\,d\mu_\infty$ where $H_\infty(x,u)=\int_0^u v_\infty(x,u)\,du$.
Define
\[
  W_n(t) = n^{1/2} \int_0^{ n^{-1} t } v_n \circ g_{n,s} \, ds
  , \qquad
  \BBW_n(t) = \int_0^t W_n(s) \otimes dW_n(s)
  .
\]
  Then \[(W_n, \BBW_n) \to_{\mu_n} (W, \BBW)
  \quad\text{in}\;\, D([0,1], \R^d \times \R^{d \times d}),\]
  where
    \(W\) is a \(d\)-dimensional Brownian motion with covariance matrix
      \(\bar h_\infty^{-1} \Sigma_\infty\) and
    \(\BBW(t) = \int_0^t W \otimes dW + (\bar h_\infty^{-1}E_\infty+E')t\).

\end{itemize}
\end{thm}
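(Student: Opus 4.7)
The approach is to reduce both parts to results for the base dynamics by working with the induced observable $\tilde v_n(x) = \int_0^{h_n(x)} v_n(x,u)\,du$. By Proposition~\ref{prop:tildev} and the uniform bounds on $\|h_n\|_\eta$ and $\|v_n\|_\eta$, the family $\tilde v_n$ is uniformly bounded in $C^\eta(\Lambda,\R^d)$; Fubini gives $\int_\Lambda \tilde v_n\,d\tilde\mu_n = \bar h_n \int_{\Omega_n} v_n\,d\mu_n = 0$. For part~(a), $T_\infty$ is a fixed nonuniformly expanding map of order $p\in(2,\gamma_\infty^{-1})$ and $\tilde v_\infty$ is a centered H\"older observable, so Proposition~\ref{prop:drift} (applied on $(\Lambda,\tilde\mu_\infty)$) yields existence of both $\Sigma_\infty$ and $E_\infty$.

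For part~(b), I would invoke Theorem~\ref{thm:WIPflow}, whose hypotheses reduce to four convergences. The convergences $\Sigma_n \to \Sigma_\infty$ and $E_n \to E_\infty$ follow from Lemma~\ref{lem:En} applied to $T_n$ and $\tilde v_n$: the family $T_n$ on $[0,1]$ satisfies statistical stability and condition~\eqref{eq:tech} as already established in Subsection~\ref{sec:PM}, and the required $|\tilde v_n - \tilde v_\infty|_\infty \to 0$ follows by splitting $\int_0^{h_n(x)} - \int_0^{h_\infty(x)}$ into an integral over the common interval $[0,\min(h_n(x),h_\infty(x))]$ (where $|v_n-v_\infty|$ is uniformly small by hypothesis) and a residual interval of length $\le |h_n-h_\infty|_\infty$ (controlled by $\sup_n |v_n|_\infty < \infty$); inspecting the proof of Lemma~\ref{lem:En} shows that sup-norm convergence plus uniform H\"older bounds on $\tilde v_n$ is sufficient input. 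The convergence $\bar h_n \to \bar h_\infty$ follows from $|h_n-h_\infty|_\infty\to0$ and $\int_\Lambda h_\infty(d\tilde\mu_n-d\tilde\mu_\infty)\to0$ (strong statistical stability). Finally,
\[
  {\textstyle \int_{\Omega_n}} H_n\otimes v_n\,d\mu_n = \bar h_n^{-1}{\textstyle \int_\Lambda} F_n\,d\tilde\mu_n, \qquad F_n(x) = {\textstyle \int_0^{h_n(x)}} H_n(x,u)\otimes v_n(x,u)\,du;
\]
the same splitting argument, combined with $|H_n(x,u)-H_\infty(x,u)|\le u\,\sup_{\text{common}}|v_n-v_\infty|$, gives $|F_n-F_\infty|_\infty\to0$, and strong statistical stability applied to the bounded H\"older function $F_\infty$ then yields $\int F_n\,d\tilde\mu_n\to\int F_\infty\,d\tilde\mu_\infty$ and hence convergence to $E'$.

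The main obstacle is the bookkeeping around the non-coincidence of the suspension spaces $\Omega_n$ and $\Omega_\infty$: because $v_n$ and $v_\infty$ are defined on different domains with different roof functions, every estimate must be split into a piece on the common interval $[0,\min(h_n,h_\infty)]$ and a symmetric-difference piece of length $\le |h_n-h_\infty|_\infty$. Once these splittings are performed consistently, all four convergences are elementary consequences of the stated hypotheses and of the strong statistical stability of $\tilde\mu_n$ recalled in Subsection~\ref{sec:PM}; the substantial ergodic-theoretic work is already absorbed into Theorem~\ref{thm:WIPflow}, Lemma~\ref{lem:En}, and Proposition~\ref{prop:drift}.
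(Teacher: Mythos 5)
Your proposal is correct and follows essentially the same route as the paper: part (a) reduces to the existence results for the centered H\"older induced observable (the paper cites Lemma~\ref{lem:En}, which rests on Corollary~\ref{cor:driftn}/Proposition~\ref{prop:drift} exactly as you do), and part (b) verifies the four hypotheses of Theorem~\ref{thm:WIPflow} via Lemma~\ref{lem:En}, statistical stability, and uniform convergence of the fibrewise integrals. You merely spell out the details the paper leaves as ``easy to see'' --- the splitting over $[0,\min(h_n,h_\infty)]$ and the observation that sup-norm convergence of the induced observables suffices in Lemma~\ref{lem:En} --- both of which are sound.
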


\begin{proof}
Part (a)  follows from Lemma~\ref{lem:En}.

To prove part~(b), we verify the hypotheses of Theorem~\ref{thm:WIPflow}.
Since $|h_n-h_\infty|_\infty\to0$ it follows from statistical stability that $\bar h_n\to \bar h_\infty$.  

Let $H_n(x,u)=\int_0^u v_n(x,s)\,ds$.
It is easy to see that $\int_0^{h_n(x)}(H_n\otimes v_n)(x,u)\,du\to\int_0^{h_\infty(x)}(H_\infty\otimes v_\infty)(x,u)\,du$ uniformly in $x$, so again by statistical stability $\int_{\Omega_n}H_n\otimes v_n\,d\mu_n\to E'$.

Finally, defining $\Sigma_n$ and $E_n$ using $T_n$, $v_n$ and $h_n$ in place of $T_\infty$, $v_\infty$ and $h_\infty$, we have that $\Sigma_n\to\Sigma_\infty$ and $E_n\to E_\infty$ by Lemma~\ref{lem:En}.
\end{proof}

\appendix

\section{Iterated WIP for martingale difference arrays}
\label{sec:mda}

In this appendix, we recast a classical iterated WIP of~\cite{JakubowskiMeminPages89,KurtzProtter91} into a form that is convenient  for ergodic stationary martingale difference arrays of the type commonly encountered in the deterministic setting.

Let $\{(\Delta_n,\cM_n,\mu_n)\}$ be a sequence of probability spaces.
Suppose that $T_n:\Delta_n\to\Delta_n$ is a sequence of measure-preserving transformations with transfer operators $L_n$
and Koopman operators $U_n$.
Suppose that $\phi_n,\,m_n:\Delta_n\to\R^d$ lie in $L^2(\Delta_n)$ and that
$\int_{\Delta_n}\phi_n\,d\mu_n=\int_{\Delta_n}m_n\,d\mu_n=0$ and $m_n \in \ker L_n$.

Define the sequence of processes 
\[
\Phi_n:\Delta_n\to D([0,\infty),\R^d), \qquad
\BBM_n:\Delta_n\to D([0,\infty),\R^{d\times d}),
\]
by
\[
\Phi_n(t)=\frac{1}{\sqrt n}\sum_{0\le j<nt} \phi_n\circ f_{\Delta_n}^j, \qquad
\BBM_n(t)=\frac1n\sum_{0\le i<j<nt}(m_n\circ f_{\Delta_n}^i)\otimes(\phi_n\circ f_{\Delta_n}^j), 
\quad t\ge0.
\]

\begin{thm} \label{thm:mda}
Suppose that:
\begin{enumerate}[label={(\alph*)}]
  \item \label{thm:mda:a}
    the family $\{|m_n|^2,\,n\ge1\}$ is uniformly integrable;
  \item \label{thm:mda:b}
    \(
      \frac{1}{\sqrt n} \max_{k\le n\TT}\Big|\sum_{j=0}^k(\phi_n-m_n)\circ f_{\Delta_n}^j\Big|\to_{\mu_n}0
    \)
      as $n\to\infty$
    for all $\TT > 0$;
  \item \label{thm:mda:c}
    there exists a constant matrix $\Sigma\in\R^{d\times d}$ such that
    for each $t>0$,
    \[
      \frac1n\sum_{j=0}^{[nt]-1}\{U_nL_n(m_n\otimes m_n)\}\circ f_{\Delta_n}^j
      \to_{\mu_n} t\Sigma
      \quad \text{ as } \ n\to\infty
      .
    \]
\end{enumerate}

Then $(\Phi_n,\BBM_n)\to_{\mu_n} (W,\BBM)$ in $D([0,\infty),\R^d\times\R^{d\times d})$ where $W$ is a $d$-dimensional Brownian motion with covariance $\Sigma$ and $\BBM(t)=\int_0^t W\otimes dW$.
\end{thm}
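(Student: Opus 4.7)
The strategy is to reduce to the purely martingale pair $(M_n,\BBM_n')$ defined by
\[
M_n(t)=n^{-1/2}\sum_{j<nt}m_n\circ f_{\Delta_n}^j,\qquad
\BBM_n'(t)=n^{-1}\sum_{0\le i<j<nt}(m_n\circ f_{\Delta_n}^i)\otimes(m_n\circ f_{\Delta_n}^j),
\]
apply the classical iterated WIP of~\cite{JakubowskiMeminPages89,KurtzProtter91} to obtain $(M_n,\BBM_n')\to_{\mu_n}(W,\BBM)$, and then transfer the limit to $(\Phi_n,\BBM_n)$ using hypothesis (b). Note that $\BBM_n'$ is precisely the discrete It\^o integral $\int_0^{\,\cdot}M_n(s-)\otimes dM_n(s)$.

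Since $m_n\in\ker L_n$, the array $\{m_n\circ f_{\Delta_n}^{n-k}\}_{k=1}^n$ together with the increasing filtration $\hat\cF_{n,k}=f_{\Delta_n}^{-(n-k)}\cM_n$ is a forward martingale difference array. Using $\E[\,\cdot\,|\,f_{\Delta_n}^{-\ell}\cM_n]=(L_n^\ell\,\cdot\,)\circ f_{\Delta_n}^\ell$, one computes that the predictable quadratic variation of the rescaled martingale up to rescaled time $t$ equals
\[
\tfrac{1}{n}\sum_{j=n-[nt]}^{n-1}(U_nL_n(m_n\otimes m_n))\circ f_{\Delta_n}^j=A_n^n-A_n^{n-[nt]},
\]
where $A_n^k=\tfrac{1}{n}\sum_{j=0}^{k-1}(U_nL_n(m_n\otimes m_n))\circ f_{\Delta_n}^j$ is the Ces\`aro sum appearing in hypothesis (c). Applying (c) at rescaled times $1$ and $1-t$ gives convergence of the predictable quadratic variation to $t\Sigma$ in probability. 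Hypothesis (a) (uniform integrability of $\{|m_n|^2\}$) yields the usual conditional Lindeberg condition. The classical iterated martingale WIP, applied to the time-reversed array and transferred back by continuous mapping (using time-symmetry of Brownian motion), then delivers $(M_n,\BBM_n')\to_{\mu_n}(W,\BBM)$ in $D([0,\infty),\R^d\times\R^{d\times d})$.

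To pass from $(M_n,\BBM_n')$ to $(\Phi_n,\BBM_n)$, set $\Psi_n=\Phi_n-M_n$, so hypothesis (b) is exactly $\sup_{t\le T}|\Psi_n(t)|\to_{\mu_n}0$. Abel summation gives the discrete integration-by-parts identity
\[
\BBM_n(t)-\BBM_n'(t)=\int_0^t M_n(s-)\otimes d\Psi_n(s)=M_n(t)\otimes\Psi_n(t)-\int_0^t\Psi_n(s-)\otimes dM_n(s)-[M_n,\Psi_n](t).
\]
The boundary term vanishes in probability because $\sup_{t\le T}|M_n(t)|$ is tight (by the previous step) and $\sup|\Psi_n|\to 0$. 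The quadratic covariation equals $n^{-1}\sum_{j<nt}(m_n\otimes(\phi_n-m_n))\circ f_{\Delta_n}^j$, which one controls via a Birkhoff-type argument after noting that the factor $\phi_n-m_n$ is asymptotically a coboundary in the sense of (b). The martingale integral $\int\Psi_n^-\otimes dM_n$ is handled by truncating $\Psi_n=\Psi_n\mathbf{1}_{\{|\Psi_n|\le\eps\}}+\Psi_n\mathbf{1}_{\{|\Psi_n|>\eps\}}$, bounding the first piece by Doob's inequality together with the $L^2$-CLT bound on $M_n$, and bounding the second via the vanishing probability of its support.

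The main obstacle is the stochastic integral $\int\Psi_n^-\otimes dM_n$: the integrand $\Psi_n$ converges to zero only in probability (not in any $L^p$) while $M_n$ has order-one fluctuations, so a naive $L^2$ Doob estimate is unavailable. The resolution combines the above truncation with the uniform $L^2$-bound on $\max_{t\le T}|M_n(t)|$ inherited from the martingale CLT in Step~2; the same truncation idea also handles the quadratic covariation term uniformly in $t$, completing the proof modulo standard tightness arguments in the Skorohod topology.
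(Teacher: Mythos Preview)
Your overall plan (time-reverse to get a forward martingale, invoke Kurtz--Protter, then correct) is close in spirit to the paper's, but the place where you diverge introduces a genuine gap. The term you call the ``martingale integral'' $\int_0^t \Psi_n(s-)\otimes dM_n(s)$ is \emph{not} a martingale in either direction. In forward time the increment $n^{-1/2}m_n\circ f_{\Delta_n}^j$ is a reverse martingale difference with respect to the decreasing filtration $f_{\Delta_n}^{-j}\cM_n$, but the integrand $\Psi_n(j/n)$ depends on $\phi_n\circ f_{\Delta_n}^i$ for $i<j$ and is not $f_{\Delta_n}^{-j}\cM_n$-measurable. Hence Doob's inequality does not apply, and your truncation argument collapses: bounding the truncated integrand by $\eps$ only gives $\eps$ times the total variation of $M_n$, which is of order $n^{1/2}$. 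The quadratic-covariation term $n^{-1}\sum_{j<nt}(m_n\otimes(\phi_n-m_n))\circ f_{\Delta_n}^j$ is similarly not controlled by hypothesis~(b), which bounds partial sums of $\phi_n-m_n$, not Birkhoff sums of its product with $m_n$.

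The paper sidesteps this adaptedness problem by time-reversing $\phi_n$ and $m_n$ \emph{together}: with $X_{n,j}=n^{-1/2}\phi_n\circ f_{\Delta_n}^{nT-j}$ and $Y_{n,j}=n^{-1/2}m_n\circ f_{\Delta_n}^{nT-j}$, the process $X_n$ is adapted to the forward filtration $\cG_{n,j}=f_{\Delta_n}^{-(nT-j)}\cM_n$ (since $\phi_n\circ f_{\Delta_n}^{nT-j}$ is trivially $f_{\Delta_n}^{-(nT-j)}\cM_n$-measurable), while $Y_n$ is a forward martingale. Thus the mixed object $\BBY_n=\sum_{i<j}X_{n,i}\otimes Y_{n,j}=\int X_n^-\otimes dY_n$ is a bona fide martingale transform, and Kurtz--Protter applies directly to the triple $(X_n,Y_n,\BBY_n)$. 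An explicit continuous time-reversal map then carries $(X_n,Y_n,\BBY_n)$ back to $(\Phi_n,\BBM_n)$ up to errors controlled by $n^{-1/2}\max_j|\phi_n\circ f_{\Delta_n}^j|$. Your decomposition can be repaired by time-reversing the correction terms as well---then $Z_n=X_n-Y_n$ is adapted and $\int Z_n^-\otimes dY_n$ is a true martingale integral---but at that point you have essentially reproduced the paper's argument with extra steps.
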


\begin{proof}
It suffices to prove that $(\Phi_n,\BBM_n)\to_{\mu_n} (W,\BBM)$ in $D([0,\TT],\R^d\times\R^{d\times d})$ for each fixed integer $\TT\ge1$.

Define $X_{n,j}=n^{-1/2}\phi_n\circ f_{\Delta_n}^{n\TT-j}$,
$Y_{n,j}=n^{-1/2}m_n\circ f_{\Delta_n}^{n\TT-j}$,
and
\[
X_n(t)=\sum_{1\le j\le nt}X_{n,j}, \qquad 
Y_n(t)=\sum_{1\le j\le nt}Y_{n,j}, \qquad 
\BBY_n(t)=\sum_{1\le i<j\le nt}X_{n,i}\otimes Y_{n,j}, 
\]
for $ t\in[0,\TT]$.

By the arguments in the proof of~\cite[Theorem~A.1]{KKM18},
$\{Y_{n,j};\,1\le j\le n\TT\}$ is a martingale difference array with respect to the filtration
$\cG_{n,j}=T_n^{-(n\TT-j)}\cM_n$ and
$Y_n\to_{\mu_n} W$ in $D([0,\TT],\R^d)$.
Moreover, $X_n$ is adapted (i.e.\ $X_{n,j}$ is $\cG_{n,j}$-measurable for all $j,n$) and $X_n=Y_n+Z_n$ where 
\begin{align*}
|Z_n(t)| & =\frac{1}{\sqrt n} \Big|\sum_{j=1}^{[nt]}(\phi_n-m_n)\circ f_{\Delta_n}^{n\TT-j}\Big|
 \le 
\frac{2}{\sqrt n}\max_{k\le n\TT}\Big|\sum_{j=0}^k(\phi_n-m_n)\circ f_{\Delta_n}^j \Big|,
\end{align*}
so $\sup_{t\le \TT}|Z_n(t)|\to_{\mu_n}0$ by assumption~\ref{thm:mda:b}.
It follows easily that $(X_n,Y_n)\to_{\mu_n}(W,W)$ in $D([0,\TT],\R^d\times\R^d)$.

Also
$\int_{\Delta_n}|Y_n(t)|^2\,d\mu_n=n^{-1}[nt]\int_{\Delta_n}|m_n|^2\,d\mu_n
\le \TT|m_n|_2^2$ which is bounded by assumption~\ref{thm:mda:a}, so
condition~C2.2(i) in~\cite[Theorem~2.2]{KurtzProtter91} is trivially satisfied.
Applying~\cite[Theorem~2.2]{KurtzProtter91} (or alternatively~\cite{JakubowskiMeminPages89}) we deduce that 
$(X_n,Y_n,\BBY_n)\to_{\mu_n} (W,W,\BBM)$ 
in $D([0,\TT],\R^d\times\R^d\times\R^{d\times d})$.

Next, let $\widetilde D$ denote c\`agl\`ad functions.
Adapting~\cite{KM16}, we define
$g:D([0,\TT],\R^d\times\R^d\times\R^{d\times d})\to
\widetilde D([0,\TT],\R^d\times\R^{d\times d})$,
\[
g(r,u,v)(t)=\big(r(\TT)-r(\TT-t)\,,\,\{v(\TT)-v(\TT-t)-r(\TT-t)\otimes (u(\TT)-u(\TT-t))\}^*\big),
\]
where ${}^*$ denotes matrix transpose.

We claim that 
\[
\SMALL (\Phi_n,\BBM_n)=g(X_n,Y_n,\BBY_n)+F_n\quad\text{where}\quad
\sup_{t\in[0,\TT]}|F_n(t)|\to_{\mu_n}0.
\]
Suppose that the claim is true.
By the continuous mapping theorem,
$g(X_n,Y_n,\BBY_n)\to_{\mu_n} g(W,W,\BBM)$ in $\widetilde D([0,\TT],\R^d\times\R^{d\times d})$.
Using the fact that 
the limiting process has continuous sample paths, it follows (see~\cite[Proposition~4.9]{KM16}) that
$(\Phi_n,\BBM_n)\to_{\mu_n} g(W,W,\BBM)$ in $D([0,\TT],\R^d\times\R^{d\times d})$.
By~\cite[Lemma~4.11]{KM16},
 the processes $g(W,W,\BBM)$ and $(W,\BBM)$ are equal in distribution so 
$(\Phi_n,\BBM_n)\to_{\mu_n} (W,\BBM)$ in $D([0,\TT],\R^d\times\R^{d\times d})$.

It remains to prove the claim.  Write $g=(g^1,g^2)$ where 
$g^1:D([0,\TT],\R^d\times\R^d\times\R^{d\times d})\to
\widetilde D([0,\TT],\R^d)$ and
$g^2:D([0,\TT],\R^d\times\R^d\times\R^{d\times d})\to
\widetilde D([0,\TT],\R^{d\times d})$.

First,
\begin{align*}
\Phi_n(t) & =\frac{1}{\sqrt n}\sum_{j=0}^{[nt]-1}\phi_n\circ f_{\Delta_n}^j
=\sum_{j=0}^{[nt]-1}X_{n,n\TT-j}
=\sum_{j=n\TT-[nt]+1}^{n\TT}X_{n,j} 
=\sum_{j=[n(\TT-t)]+1}^{n\TT}X_{n,j}+F_n^1(t)
\\ & =X_n(\TT)-X_n(\TT-t)+F_n^1(t)
  =g^1(X_n,Y_n,\BBY_n)(t)+F_n^1(t),
\end{align*}
where $F_n^1(t)$ is either $0$ or $-X_{n,[n(\TT-t)]+1}$.
In particular, 
\begin{align} \label{eq:F1}
|F_n^1(t)|\le n^{-1/2}\max_{i \le n\TT}|\phi_n\circ f_{\Delta_n}^i|.
\end{align}

Second,
\begin{align*}
\BBM_n(t) & =
\frac1n\sum_{0\le i<j<nt}(m_n\circ f_{\Delta_n}^i)\otimes(\phi_n\circ f_{\Delta_n}^j) 
=
\sum_{0\le i<j<nt}Y_{n,n\TT-i}\otimes X_{n,n\TT-j} 
\\ &  =
\sum_{n\TT-[nt]< j<i\le n\TT}Y_{n,i}\otimes X_{n,j} 
 =
\sum_{n\TT-[nt]< i<j\le n\TT}(X_{n,i}\otimes Y_{n,j})^* 
\\ & =\sum_{[n(\TT-t)]< i<j\le n\TT}(X_{n,i}\otimes Y_{n,j})^* +F^2_n(t)^*
 \\ & = \{\BBY_n(\TT)-\BBY_n(\TT-t)-X_n(\TT-t)\otimes(Y_n(\TT)-Y_n(\TT-t))\}^*+F^2_n(t)^*
 \\ & = g^2(X_n,Y_n,\BBY_n)(t)+F^2_n(t)^*,
\end{align*}
where 
$F^2_n(t)$ is either $0$ or
$-\sum_{[n(\TT-t)] + 1 < j \le n\TT} X_{n,[n(\TT-t)]+1} \otimes Y_{n,j}$.
In particular,
by Burkholder's inequality and assumption \ref{thm:mda:a},
\begin{align} \label{eq:F2} \nonumber
  |F_n^2(t)|
  \le n^{-1} \max_{i \le n\TT} |\phi_n\circ f_{\Delta_n}^i| \, \max_{q\le n\TT}
  \Big|\sum_{0\le j \le q}m_n\circ f_{\Delta_n}^j\Big|
  & \ll n^{-1} \max_{i \le n\TT} |\phi_n\circ f_{\Delta_n}^i|\, \TT|m_n|_2
  \\ & \ll n^{-1} \max_{i \le n\TT} |\phi_n\circ f_{\Delta_n}^i|
  .
\end{align}

By~\eqref{eq:F1} and~\eqref{eq:F2},
it remains to show that 
   $n^{-1} \max_{i \le n\TT} |\phi_n\circ f_{\Delta_n}^i|\to_{\mu_n}0$.
Note that
\[
  |\phi_n\circ f_{\Delta_n}^i|
  \leq |m_n\circ f_{\Delta_n}^i|
  + \Big| \sum_{j=0}^i (\phi_n - m_n) \circ f_{\Delta_n}^j \Big|
  + \Big| \sum_{j=0}^{i-1} (\phi_n - m_n) \circ f_{\Delta_n}^j \Big|
  ,
\]
so
\begin{align*}
  \max_{i \le n\TT} |\phi_n\circ f_{\Delta_n}^i|
  \leq \max_{i \le n\TT} |m_n\circ f_{\Delta_n}^i|
  + 2 \max_{i \le n\TT} \Big| \sum_{j=0}^i (\phi_n - m_n) \circ f_{\Delta_n}^j \Big|
  .
\end{align*}
Now for any $s>0$,
\begin{align*}
  n^{-1}\max_{j \le n\TT}|m_n\circ f_{\Delta_n}^j|^2 
  & \le s+n^{-1} \max_{j \le n\TT}(|m_n|^21_{\{n^{-1}|m_n|^2>s\}})\circ f_{\Delta_n}^j
\\& \le s+n^{-1} \sum_{j=0}^{n\TT}(|m_n|^21_{\{n^{-1}|m_n|^2>s\}})\circ f_{\Delta_n}^j.
\end{align*}
Hence
\[
    n^{-1} \big|\max_{j \le n\TT} |m_n\circ f_{\Delta_n}^j|\big|_2^2
  = n^{-1} \big|\max_{j \le n\TT} |m_n\circ f_{\Delta_n}^j|^2\big|_1 
  \le s+\TT\big||m_n|^21_{\{n^{-1}|m_n|^2>s\}}\big|_1.
\]
Since $s>0$ is arbitrary, 
it follows from assumption~\ref{thm:mda:a} that
  \(\lim_{n\to\infty}n^{-1/2}\big|\max_{j \le n\TT}|m_n\circ f_{\Delta_n}^j|\big|_2=0\).
Combining this with assumption~\ref{thm:mda:b}, 
$n^{-1/2}\max_{i\le n\TT}|\phi_n\circ f_{\Delta_n}^i|\to_{\mu_n}0$ as required.
\end{proof}

\section{Strong distributional convergence for families}
\label{sec:SDC}

In this appendix, we formulate a result on strong distributional convergence~\cite{Eagleson76, Zweimuller07} in the context of families of dynamical systems.  

Let $(\Omega_n,\mu_n)$, $n\ge1$, be a sequence of probability spaces with
measure-preserving semiflows $g_{n,t} : \Omega_n\to\Omega_n$.
Suppose that $\lambda_n$ is a sequence of probability measures on $\Omega_n$
such that
$\lambda_n\ll\mu_n$.
Define $\rho_n=d\lambda_n/d\mu_n$.

\begin{lemma} \label{lem:SDC}
  Suppose that $R_n$ is a sequence of random elements
  on $\Omega_n$ taking values in the metric space
  $(\cB,d_\cB)$ and that $R$ is a random element of $\cB$.
  Suppose moreover that 
\begin{itemize}
\item[(S1)] 
$\BIG\sup_n\int \rho_n^{1+\delta}\,d\mu_n<\infty$ for some $\delta>0$;
\item[(S2)]  
    $d_\cB(R_n \circ g_{n,t} , R_n) \to_{\mu_n}0$
as $n\to\infty$ for each $t\ge0$
(equivalently, for all $t\in[0,t_0]$ for some fixed $t_0>0$);
\item[(S3)]
    $\BIG\inf_{\TT >0} \limsup_{n \to\infty}
      \int\Big|
      \frac1\TT \int_0^\TT \rho_n \circ g_{n,t} \, dt
      \,-\, 1
      \Big| \, d\mu_n = 0$.
\end{itemize}

  Then $R_n\to_{\mu_n}R$ if and only if $R_n\to_{\lambda_n}R$.
\end{lemma}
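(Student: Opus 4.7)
The strategy is to reduce both implications to a single approximation identity: for every bounded Lipschitz $f:\cB\to\R$,
\[
\lim_{n\to\infty}\Big(\int f(R_n)\,d\lambda_n - \int f(R_n)\,d\mu_n\Big)=0.
\]
Once this is established, the ``if and only if'' follows at once, since bounded Lipschitz functions are convergence-determining on separable metric spaces.

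To prove the identity I would exploit $g_{n,t}$-invariance of $\mu_n$ to write, for any $\TT>0$,
\[
\int f(R_n)\,d\lambda_n=\int f(R_n)\rho_n\,d\mu_n
=\frac{1}{\TT}\int_0^\TT\!\!\int f(R_n\circ g_{n,t})(\rho_n\circ g_{n,t})\,d\mu_n\,dt,
\]
and then add and subtract $f(R_n)(\rho_n\circ g_{n,t})$ inside to split the right-hand side as
\(
\int f(R_n)A_{n,\TT}\,d\mu_n+\mathcal{E}_{n,\TT},
\)
where $A_{n,\TT}=\frac{1}{\TT}\int_0^\TT\rho_n\circ g_{n,t}\,dt$ and
\(
\mathcal{E}_{n,\TT}=\frac{1}{\TT}\int_0^\TT\!\!\int(f(R_n\circ g_{n,t})-f(R_n))(\rho_n\circ g_{n,t})\,d\mu_n\,dt.
\)
The first piece differs from $\int f(R_n)\,d\mu_n$ by at most $|f|_\infty\int|A_{n,\TT}-1|\,d\mu_n$, which by hypothesis (S3) can be made arbitrarily small in the sense of $\limsup_n$ by choosing $\TT$ large.

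For the error $\mathcal{E}_{n,\TT}$ with such a fixed $\TT$, I would apply H\"older's inequality on $(\Omega_n,\mu_n)$ with exponents $\tfrac{1+\delta}{\delta}$ and $1+\delta$. Invariance of $\mu_n$ gives $\|\rho_n\circ g_{n,t}\|_{L^{1+\delta}(\mu_n)}=\|\rho_n\|_{L^{1+\delta}(\mu_n)}$, which is uniformly bounded by (S1). The remaining factor $\|f(R_n\circ g_{n,t})-f(R_n)\|_{L^{(1+\delta)/\delta}(\mu_n)}$ is bounded by $2|f|_\infty$ and tends to $0$ in $\mu_n$-probability for each fixed $t$ by (S2), hence in $L^{(1+\delta)/\delta}(\mu_n)$ by bounded convergence; a further bounded-convergence step in $t\in[0,\TT]$ shows $\mathcal{E}_{n,\TT}\to 0$ as $n\to\infty$. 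Combining: given $\eps>0$, first pick $\TT$ via (S3) and then let $n\to\infty$, obtaining $\limsup_n|\int f(R_n)\,d\lambda_n-\int f(R_n)\,d\mu_n|\le|f|_\infty\eps$; since $\eps$ is arbitrary, the identity is proved.

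The main obstacle is the mismatch of quantifiers between (S2) (pointwise in $t$) and (S3) (average over $[0,\TT]$ after sending $n\to\infty$). The arrangement ``choose $\TT$ first, then $n\to\infty$'' is what makes the argument work: once $\TT$ is fixed, the time-average in $\mathcal{E}_{n,\TT}$ is over a bounded interval, and (S2) paired with (S1) via H\"older disposes of this term uniformly on $[0,\TT]$. A minor preliminary point to record is that (S2) for $t\in[0,t_0]$ extends to all $t\ge 0$ via the triangle inequality and measure preservation, so there is no loss in assuming (S2) holds for every $t$ entering the calculation.
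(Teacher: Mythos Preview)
Your proof is correct and follows essentially the same route as the paper's: reduce to showing $\int f(R_n)\,d\lambda_n-\int f(R_n)\,d\mu_n\to0$ for bounded Lipschitz $f$, use $g_{n,t}$-invariance of $\mu_n$ to introduce a time average of $\rho_n\circ g_{n,t}$, control the resulting error term via H\"older with (S1) and (S2), and finish with (S3). The only cosmetic difference is that the paper first fixes $t$ and shows $\int f(R_n)(\rho_n\circ g_{n,t}-\rho_n)\,d\mu_n\to0$ before averaging in $t$, whereas you build the time average in from the start; the substance is identical.
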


\begin{proof}
  The proof follows~\cite[Theorem~4,1]{Gouezel_doub}.
  Let $\Lip_\cB$ denote the space of Lipschitz bounded functions
  $\psi:\cB\to\R$.
  Define $A_n(\psi,w)=\int \psi \circ R_n\, w \,d\mu_n$
  for $\psi \in \Lip_\cB$ and $w : \cB \to \R$ integrable.
  Note that $|A_n(\psi,w)| \le |\psi|_\infty |w|_1$ for all $n$.

  Now $R_n\to_{\mu_n}R$ if and only if
  $\lim_{n \to \infty}A_n(\psi,1)= \E(\psi(R))$ 
  for every $\psi\in\Lip_\cB$.
  Similarly $R_n\to_{\lambda_n}R$ if and only if
  $\lim_{n\to \infty}A_n(\psi, \rho_n)= \E(\psi(R))$
  for every $\psi\in\Lip_\cB$.
  Hence it is enough to show that
  for every $\psi\in\Lip_\cB$
  \[
    \lim_{n\to\infty} \big( A_n(\psi,\rho_n) - A_n(\psi,1) \big)
    = 0
    .
  \]

  Fix \(t \geq 0\). Since $\mu_n$ is $g_{n,t}$-invariant,
\[
    A_n(\psi, \rho_n \circ g_{n,t}) - A_n(\psi, \rho_n)
     = \int (\psi \circ R_n - \psi \circ R_n \circ g_{n,t})\,\rho_n\circ g_{n,t}\,d\mu_n.
\]
By (S1) and $g_{n,t}$-invariance,
    $\sup_{n,t}\int\rho_n^{1+\delta}\circ g_{n,t}\,d\mu_n<\infty$.
Hence by H\"older's inequality,
\[
    |A_n(\psi, \rho_n \circ g_{n,t}) - A_n(\psi, \rho_n)|
     \ll \Big(\int|\psi\circ R_n - \psi\circ R_n \circ g_{n,t}|^q\,d\mu_n\Big)^{1/q}
\]
where $q$ is the conjugate exponent to $1+\delta$.
Now $|\psi\circ R_n - \psi\circ R_n \circ g_{n,t}|\le 2|\psi|_\infty$ 
and
\[
|\psi\circ R_n - \psi\circ R_n \circ g_{n,t}|\le \Lip\,\psi\,
d_\cB(R_n , R_n \circ g_{n,t})\to_{\mu_n}0,
\]
by (S2).  Hence
  \(
    \lim_{n \to \infty}
    \big( A_n(\psi, \rho_n \circ g_{n,t}) - A_n(\psi, \rho_n) \big)
    = 0
  \)
for each $t\ge0$.
  Denote \(U_{n,\TT} = \TT^{-1} \int_0^\TT \rho_n \circ g_{n,t} \, dt\).
  Then
  \begin{equation} 
    \label{eq:AA}
    \lim_{n \to \infty} A_n (
      \psi, U_{n,\TT} - \rho_n
    )
    = 0
    ,
  \end{equation}
  for each fixed $\TT > 0$.
  Now,
  \begin{align*}
    |A_n(\psi,\rho_n)-A_n(\psi,1)|
    & \le 
    \big| A_n (\psi,\rho_n-U_{n,\TT} )\big|
    +\big| A_n(\psi,U_{n,\TT}-1) \big|
    \\ & \le
    \big|A_n(\psi,\rho_n-U_{n,\TT})\big|
    +|\psi|_\infty \int |U_{n,\TT}-1| \, d\mu_n
    .
  \end{align*}
  By~\eqref{eq:AA},
  \[
    \limsup_{n \to \infty} |A_n(\psi,\rho_n)-A_n(\psi,1)| 
    \leq |\psi|_\infty \limsup_{n\to\infty}\int |U_{n,\TT}-1| \, d\mu_n
    ,
  \]
  and the result follows from~(S3).
\end{proof}

\begin{rmk}  \label{rmk:SDC}
The discrete-time version of Lemma~\ref{lem:SDC} takes the following form.
Let $(\Lambda_n,\mu_n)$, $n\ge1$, be a sequence of probability spaces with
measure-preserving maps $T_n : \Lambda_n\to\Lambda_n$.
Suppose that $\lambda_n$ is a sequence of probability measures on $\Lambda_n$
such that
$\lambda_n\ll\mu_n$.
Define $\rho_n=d\lambda_n/d\mu_n$.
Suppose that $R_n$ is a sequence of random elements
  on $\Lambda_n$ taking values in the metric space
  $(\cB,d)$ and that $R$ is a random element of $\cB$.
We continue to assume (S1).
  Suppose moreover that
\begin{itemize}
\item[(S4)]  
    $d_\cB(R_n \circ T_n , R_n) \to_{\mu_n}0$
as $n\to\infty$;
\item[(S5)]
    $\BIG\inf_{N \geq 1} \limsup_{n \to\infty}
      \int\Big|
      \frac1N \sum_{j=0}^{N-1} \rho_n \circ T_n^j 
      \,-\, 1
      \Big| \, d\mu_n = 0$.
\end{itemize}

  Then $R_n\to_{\mu_n}R$ if and only if $R_n\to_{\lambda_n}R$.
\end{rmk}

\end{document}